\newcommand{\ztt}{\mbox{\usefont{T2A}{\rmdefault}{m}{n}\cyrc}}
\newcommand{\sh}{\mbox{\usefont{T2A}{\rmdefault}{m}{n}\cyrsh}}
\newcommand{\ya}{\mbox{\usefont{T2A}{\rmdefault}{m}{n}\cyrya}}
\newcommand{\sch}{\mbox{\usefont{T2A}{\rmdefault}{m}{n}\cyrshch}}
\newcommand{\comment}[1]{}
\numberwithin{equation}{section}
\theoremstyle{plain}
\newtheorem{theorem}{Theorem}[section]
\newtheorem{lemma}[theorem]{Lemma}
\newtheorem{coro}[theorem]{Corollary}
\newtheorem{prop}[theorem]{Proposition}
\theoremstyle{definition}
\newtheorem{example}[theorem]{Example}
\theoremstyle{remark}
\newtheorem{remark}[theorem]{Remark}
\newtheorem{paso}{Step}
\newtheorem{case}{Case}
\def\pf{\begin{proof}}
\def\epf{\end{proof}}
\newcommand{\fudos}{\hspace{-1pt}\frac{3}{2}}
\newcommand{\vi}{\textbf{(i)} }
\newcommand{\vii}{\textbf{(ii)} }
\newcommand{\rests}{ \mathfrak{s}}
\newcommand{\ba}{ \mathbf{a}}
\newcommand{\bm}{ \mathbf{m}}
\newcommand{\bn}{ \mathbf{n}}
\newcommand{\ku}{ \Bbbk}
\newcommand{\kut}{ \ku^{\times}}
\newcommand{\Fp}{\mathbb F_p}
\newcommand{\G}{\mathbb G}
\newcommand{\ghost}{\mathscr{G}}
\newcommand{\sa}{\mathtt{r}}
\newcommand{\qmb}{\mathtt{q}}
\newcommand{\I}{\mathbb I}
\newcommand{\Iw}{\mathbb I^{\dagger}}
\newcommand{\Idd}{\mathbb I^{\ddagger}}
\newcommand{\N}{\mathbb N}
\newcommand{\bp}{\mathbf{p}}
\newcommand{\bq}{\mathbf{q}}
\newcommand{\Z}{\mathbb Z}
\renewcommand{\_}[1]{_{\left( #1 \right)}}
\newcommand{\cA}{\mathcal{A}}
\newcommand{\cB}{\mathcal{B}}
\newcommand{\cBt}{\widetilde{\mathcal{B}}}
\newcommand{\D}{\mathcal{D}}
\newcommand{\cJ}{\mathcal{J}}
\newcommand{\Ss}{{\mathcal S}}
\newcommand{\cV}{\mathcal{V}}
\newcommand{\X}{\mathcal{X}}
\newcommand{\lstr}{\mathfrak L}
\newcommand{\cyc}{\mathfrak C}
\newcommand{\pos}{\mathfrak P}
\newcommand{\eny}{\mathfrak E}
\newcommand\ad{\operatorname{ad}}
\newcommand{\Alg}{\Hom_{\text{alg}}}
\newcommand{\Der}{\operatorname{Der}}
\newcommand{\diag}{\operatorname{diag}}
\newcommand{\id}{\operatorname{id}}
\newcommand{\gr}{\operatorname{gr}}
\newcommand{\GK}{\operatorname{GKdim}}
\newcommand{\Hom}{\operatorname{Hom}}
\def\ydh{{}^{H}_{H}\mathcal{YD}}
\newcommand{\Bdiag}{\mathcal{B}^\mathrm{diag}}
\newcommand{\Vdiag}{\mathcal{V}^\mathrm{diag}}
\newcommand{\NA}{\mathscr{B}}
\newcommand{\toba}{\mathscr{B}}
\newcommand{\ot}{\otimes}
\newcommand{\ydG}{{}^{\ku \Gamma }_{\ku \Gamma }\mathcal{YD}}
\newcommand{\ydg}{{}^{\ku G}_{\ku G}\mathcal{YD}}
\newcommand{\ydz}{{}^{\ku \Z}_{\ku \Z}\mathcal{YD}}
\newcounter{tabla}\stepcounter{tabla}
\begin{document}

\title[Pointed Hopf algebras in positive characteristic]{Examples of finite-dimensional pointed Hopf algebras in positive characteristic}

\author[Andruskiewitsch, Angiono]
{Nicol\'as Andruskiewitsch, Iv\'an Angiono}

\address{FaMAF-CIEM (CONICET), Universidad Nacional de C\'ordoba,
Medina A\-llen\-de s/n, Ciudad Universitaria, 5000 C\' ordoba, Rep\'
ublica Argentina.} \email{(andrus|angiono)@famaf.unc.edu.ar}

\author[Heckenberger]
{Istv\'an Heckenberger}

\address{Philipps-Universität Marburg,
Fachbereich Mathematik und Informatik,
Hans-Meerwein-Straße,
D-35032 Marburg, Germany.} \email{heckenberger@mathematik.uni-marburg.de}

\dedicatory{To Nikolai Reshetikhin on his 60th birthday with admiration.}

\thanks{\noindent 2010 \emph{Mathematics Subject Classification.}
16T20, 17B37. \newline The work of N. A. and I. A.  was partially supported by CONICET,
Secyt (UNC). 
The work of N. A. and I. A., respectively I. H., was partially done during visits to the University of Marburg, respectively C\'ordoba, 
supported by the Alexander von Humboldt Foundation
through the Research Group Linkage Programme}

\begin{abstract}
We present new examples of finite-dimensional Nichols algebras over fields of positive characteristic. The corresponding braided vector spaces are not of diagonal type, admit a realization as Yetter-Drinfeld modules over finite abelian groups and are analogous to braidings over fields of characteristic zero whose Nichols algebras have finite Gelfand-Kirillov dimension.

We obtain new examples of finite-dimensional pointed Hopf algebras by bosonization with group algebras of suitable finite abelian groups.
\end{abstract}

\maketitle


\section{Introduction}\label{section:introduction}

\subsection{Overview}
This is a contribution to the classification of finite-dimen\-sional pointed Hopf algebras in positive characteristic. 
Beyond the classical theme of cocommutative Hopf algebras--see for instance \cite{CF} and references therein--the problem was considered in several recent works \cite{clw,HW,NW,NWW1,NWW2,W}.
As in various of these papers, the focus of our work is on finite-dimensional Nichols algebras over finite abelian groups. 
Let $\ku$ be an algebraically closed field of characteristic $p \geq 0$. 
When $p=0$, such Nichols algebras are necessarily of diagonal type and their classification was achieved in \cite{H-classif}. 
When $p> 0$,  finite-dimensional Nichols algebras of diagonal type of rank 2 and 3 were classified in \cite{HW,W}.
Notice that there are more examples than in characteristic 0: indeed, 1 in the diagonal is no longer excluded.

\begin{example}\label{exa:qls}
Assume that $p >0$. Given $\theta\in \N$, we set $\I_{\theta} = \{1,2, \dots, \theta\}$. Let $\bq = (q_{ij})_{i, j \in \I_{\theta}} \in \ku^{\theta \times \theta}$, 
be a matrix with $q_{ii} = 1 = q_{ij}q_{ji}$ for all $i\neq j \in \I_{\theta}$.
Let$(V, c)$ be a braided vector space of dimension $\theta$, of diagonal type with matrix $\bq$ with respect to a basis
$(x_i)_{i\in \I_\theta}$, that is $c: V \otimes V \to V \otimes V$ is given by $c(x_i \otimes x_j) = q_{ij} x_j \otimes x_i$.
Then the corresponding Nichols algebra is 
\begin{align*}
\toba(V) & = \rests_{\bq}(V) := T(V) / \langle x_i^p,\ i \in \I_{\theta}, \quad x_ix_j - q_{ij}x_j x_i, \ i < j \in \I_{\theta}  \rangle.
\end{align*}
Clearly, $\dim \rests_{\bq}(V)  = p^{\theta}$.
\end{example}

Furthermore, if $p>0$, then there are finite-dimensional Nichols algebras over abelian groups that are \emph{not} of diagonal type, a remarkable example being the Jordan plane that has dimension $p^2$ \cite{clw} (it gives rise to pointed Hopf algebras of order $p^3$, see \cite{NW}), in contrast with characteristic 0,
where it has Gelfand-Kirillov dimension $2$. 
In fact, Nichols algebras over abelian groups \emph{with finite Gelfand-Kirillov dimension and assuming $p=0$} 
were the subject of the recent papers \cite{aah-triang,aah-diag}. Succinctly, 
the main relevant results in loc. cit. are:

\begin{itemize}[leftmargin=*] \renewcommand{\labelitemi}{$\circ$}
\item It was conjectured in \cite{aah-triang} that finite GK-dimensional Nichols algebras 
of diagonal type have arithmetic root system; the conjecture is true in rank 2 and also in affine Cartan type \cite{aah-diag}.

\smallbreak
\item A class of braided vector spaces arising from abelian groups was introduced in \cite{aah-triang}; they are decomposable with components being points and blocks. 
Assuming the validity of the above Conjecture, the finite GK-dimensional Nichols algebras from this class were classified in \cite{aah-triang}.  
\end{itemize}

Beware that there are finite GK-dimensional Nichols algebras over abelian groups that do not belong to the referred class, see \cite[Appendix]{aah-triang}.

\smallbreak
The braided vector spaces in the class alluded to above can be labelled with flourished Dynkin diagrams.
The main result of \cite{aah-triang} says that the Nichols algebra of a braided vector space in the class has finite Gelfand-Kirillov dimension if and only if its flourished Dynkin diagram is admissible.

\emph{From now on we assume that  $p>2$}. (The case $p=2$ has to be treated separately).
In the present paper, we show,  adapting arguments from \cite{aah-triang}, 
that the  Nichols algebras of many braided vector spaces
of admissible flourished Dynkin diagrams are finite-dimensional. 
This result extends Example \ref{exa:qls} and the Jordan plane \cite{clw} and is reminiscent of 
a familiar phenomenon in Lie algebras in positive characteristic.
By bosonization we obtain many new examples of finite-dimensional pointed Hopf algebras. 

\subsection{The main result}
To describe more precisely our main Theorem we need first to discuss blocks. 

For $k<\ell \in\N_0$, we set $\I_{k,\ell}=\{k,k+1,\dots,\ell\}$, $\I_{\ell}=\I_{1, \ell}$.

\smallbreak
A \emph{block}  $\cV(\epsilon,\ell)$, where  $\epsilon\in \kut$ and $\ell \in \N_{\ge 2}$, is a braided vector space
with a basis $(x_i)_{i\in\I_\ell}$ such that for $i, j \in \I_\ell$, $1 < j$:
\begin{align}\label{equation:basis-block}
c(x_i \ot  x_1) &= \epsilon x_1 \ot  x_i,& c(x_i \ot  x_j) &=(\epsilon x_j+x_{j-1}) \ot  x_i.
\end{align}

In characteristic 0, the only Nichols algebras of blocks with finite $\GK$ are the Jordan plane $\toba(\cV(1,2))$ and the super Jordan plane
$\toba(\cV(-1,2))$; both have $\GK = 2$. 
In our context with $p>2$,  the Jordan plane $\toba(\cV(1,2))$ has dimension $p^2$ \cite{clw}; see 
Lemma \ref{prop:1block}.
Our starting result  is that the super Jordan plane $\toba(\cV(-1,2))$ has dimension $4p^2$, see
Proposition \ref{prop:-1block}.
For simplicity a block $\cV(\epsilon,2)$ of dimension 2 is called an $\epsilon$-block.
We also prove that a block $\cV(\epsilon,2)$ has finite-dimensional Nichols algebra only when $\epsilon = \pm 1$, 
see Proposition \ref{prop:exhaustion}.

\smallbreak
The braided vector spaces in this paper belong to the class analogous to the one considered in \cite{aah-triang}.
Briefly, $(V,c)$ belongs to this class if
\begin{align}\label{eq:bradinig-generalform}
V &=  V_{1} \oplus \dots \oplus V_t \oplus  V_{t + 1} \oplus \dots \oplus V_\theta,
\\ \label{eq:bradinig-generalform1}
c(V_i \otimes V_j) &=  V_j \otimes V_i,\, i,j\in \I_{\theta},
\end{align}
where  $V_h$ is a $\epsilon_h$-block, with $\epsilon_h^2 = 1$, for $h\in \I_t$; and $\dim V_{i} = 1$
with braiding determined by $q_{ii} \in \Bbbk^{\times}$ (we say that $i$ is a point),  $i\in \I_{t+1, \theta}$; the braiding between points $i$ and $j$ is given by $q_{ij} \in \Bbbk^{\times}$
while the braiding between a point and block, respectively two blocks, should have the form as in \eqref{eq:braiding-block-point}, respectively \eqref{eq:braiding-several-blocks-1pt}.
For convenience, we attach to $(V,c)$ a flourished graph $\D$ with $\theta$  vertices, those 
corresponding to a $1$-block decorated with  $\boxplus$, those to $-1$-block decorated with  $\boxminus$
and the point $i$ with $\overset{q_{ii}} {\circ}$.
If $i \neq j$ are points, and there is an edge between them  decorated by 
$\widetilde{q}_{ij} := q_{ij}q_{ji}$ when this is $\neq- 1$, or no edge if $\widetilde{q}_{ij} = 1$.
If $h$ is a block and $j$ is a point, then there is an edge between $h$ and $j$ 
decorated either by $\ghost_{hj}$ if the interaction is weak and $\ghost_{hj} \neq 0$ is the ghost, cf. \eqref{eq:discrete-ghost},
or by $(-, \ghost_{hj})$ if the interaction is mild and $\ghost_{hj}$ is the ghost; 
but no edge if the interaction is weak and $\ghost_{hj} = 0$.
There are no edges between blocks and we assume that the diagram is connected by
a well-known reduction argument. 

This class of braided vector spaces together with those of diagonal type does not exhaust
that of Yetter-Drinfeld modules arising from abelian groups; 
there are still those containing a pale block as in \cite[Chapter 8]{aah-triang}.
Synthetically our main result is the following.

\begin{theorem}\label{thm:main-intro} Let $V$ be a braided vector space as in the following list, then $\dim \NA (V) < \infty$.
\begin{enumerate}[leftmargin=*,label=\rm{(\alph*)}]
\item\label{item:block-point} $V$ has braiding \eqref{eq:braiding-block-point} and is listed in Table \ref{tab:toba-finitedim-block-point}, or

\item\label{item:block-points} $V$ has braiding
	\eqref{eq:braiding-block-several-point} and is listed in Table
	\ref{tab:toba-finitedim-block-points}, or

\item\label{item:blocks-point} $V$ has braiding \eqref{eq:braiding-several-blocks-1pt}, or

\item\label{item:paleblock-point} $V$ has braiding \eqref{eq:braiding-paleblock-point} and is listed in Table \ref{tab:toba-finitedim-paleblock-point}.
\end{enumerate}

By bosonization with suitable abelian groups, we get examples of finite-dimensional pointed Hopf algebras in positive characteristic. 
\end{theorem}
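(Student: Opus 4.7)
The plan is to prove each of (a)--(d) by adapting, case-by-case, the PBW analysis of the corresponding finite GK-dimensional Nichols algebras of \cite{aah-triang}. The underlying idea is: in characteristic 0 each admissible flourished diagram in Table \ref{tab:toba-finitedim-block-point}--\ref{tab:toba-finitedim-paleblock-point} yields a Nichols algebra with a distinguished PBW basis on root vectors; in characteristic $p>2$ we expect the same PBW basis to persist, but now each PBW generator becomes nilpotent, leading to finite dimension rather than infinite with finite GK. So the first step is to write down, for each diagram, a pre-Nichols algebra $\cBt(V)$ defined by the relations from \cite{aah-triang} together with the extra truncation relations: $x^p=0$ on the point-type generators (as in Example \ref{exa:qls}), and the appropriate power relations on block-type root vectors, modelled on the Jordan and super Jordan planes, cf. Lemma \ref{prop:1block} and Proposition \ref{prop:-1block}.

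Next I would establish that $\cBt(V)$ has a convex PBW basis by the usual braided commutator/Lyndon-word arguments; the truncation relations simply bound the exponents in the basis, so one gets an explicit upper bound on $\dim \cBt(V)$. Since the defining relations of \cite{aah-triang} hold in $\NA(V)$ by the results cited there, there is a canonical surjection $\cBt(V)\twoheadrightarrow \NA(V)$, and the problem reduces to checking that the additional $p$-power relations also hold in $\NA(V)$. This I would do by verifying that the corresponding elements lie in the kernel of all braided skew derivations $\partial_i$, using the Leibniz rule and the explicit action of $\partial_i$ on root vectors computed in \cite{aah-triang}; for the block variables this requires the $p>2$ hypothesis and a careful use of Lucas-type congruences, paralleling the Jordan plane computation of \cite{clw}. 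Once the surjection $\cBt(V)\to \NA(V)$ is shown to be an isomorphism, comparison with the expected dimension is immediate.

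The treatment naturally splits by type: for \ref{item:block-point} and \ref{item:block-points} one has a single block $\cV(\pm 1,2)$ coupled to one or several points; here the induction is on the number of points, using the triangular decomposition coming from \eqref{eq:bradinig-generalform1} and the fact that the point subalgebra is a quantum linear space of dimension $p^{\theta-1}$. For \ref{item:blocks-point} the computation is symmetric in the two blocks and reduces, after identifying the connecting root vectors, to a quantum-linear-space--like argument in a rank-$4$ setting. For \ref{item:paleblock-point} the pale-block structure of \cite[Chapter 8]{aah-triang} has to be imported and truncated; this case is likely the hardest since pale blocks were not the main focus of loc.\ cit., so the PBW basis and the computation of $\partial_i$ on root vectors has to be carried out more explicitly.

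The principal obstacle is step (ii) above, that is, the verification that the truncation relations are primitive in the Nichols algebra. In mild interactions between a block and a point, and in configurations with two blocks, the skew derivations mix several generators with coefficients depending on $\epsilon=\pm 1$ and on binomial coefficients mod $p$, so the computations become combinatorially heavy; the role of the hypothesis $p>2$ appears precisely here, through denominators and through the behaviour of $(-1)$-blocks. Once finite-dimensionality of $\NA(V)$ is established, the second assertion is automatic: for any realization of $V$ as a Yetter--Drinfeld module over a finite abelian group $\Gamma$, the bosonization $\NA(V)\# \ku\Gamma$ is a finite-dimensional pointed Hopf algebra of dimension $|\Gamma|\cdot\dim\NA(V)$.
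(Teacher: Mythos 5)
Your high-level plan (present a truncated pre-Nichols algebra, bound its dimension by a PBW spanning set, and check the extra relations in $\NA(V)$ with the skew derivations) is the route the paper follows for the \emph{presentation} propositions, but as a proof of finite-dimensionality it is missing the structural step that makes the hard cases tractable, and one of your concrete claims is false. The missing idea is the splitting $\NA(V)\simeq K\#\NA(V_1)$ with $K=\NA(V)^{\mathrm{co}\,\NA(V_1)}$ and $K^1=\ad_c\NA(V_1)(V_2)$ (from \cite{HS}): the paper shows that $K^1$ is a \emph{finite-dimensional braided vector space of diagonal type} (spanned by the $z_n$, resp.\ $z_{j,n}$, resp.\ $\sch_{\bn}$), computes its generalized Dynkin diagram, and then reads off $\dim\NA(K^1)<\infty$ from the classification \cite{H-classif} and the dimension tables of \cite{AA17}. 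Without this reduction you have no way to identify the set of PBW generators for the entries of Table \ref{tab:toba-finitedim-block-points} whose coinvariant algebras have root systems of type $D_4$, $D_{j-1}$, $\mathfrak{sl}(2\vert2)$, $\mathfrak g(2,3)$ or $\mathfrak g(3,3)$; ``adapting the PBW analysis'' is not actionable there unless you first know that the problem is a diagonal-type one.

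The step that would actually fail is your proposed induction for \ref{item:block-point} and \ref{item:block-points} ``using the fact that the point subalgebra is a quantum linear space of dimension $p^{\theta-1}$.'' This is false for every entry of Table \ref{tab:toba-finitedim-block-points}: the points there carry labels $-1$ or $\omega$ and are joined by edges with $\widetilde q_{ij}\neq 1$ (e.g.\ for $\lstr(A_{\theta-1})$ the points form a connected Cartan $A_{\theta-1}$ diagram at $-1$, whose Nichols algebra has dimension $2^{\binom{\theta-1}{2}\cdot\text{(nothing like }p^{\theta-1})}$ --- in any case a power of $2$, not of $p$), so both the dimension count and the induction collapse. Two further points your uniform scheme does not cover: for the mild interaction case $\cyc_1$ the orbit $\ad_c\NA(V_1)(V_2)$ is \emph{not} spanned by the $z_n$ alone --- one needs the extra generators $f_n=\ad_c x_1(z_n)$ --- so the list of PBW generators you would import from the weak case is wrong there; and case \ref{item:blocks-point} involves $t\geq 2$ blocks with $K^1$ of rank $\prod_j(\vert\sa_j\vert+1)$, not a ``rank-$4$'' configuration. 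Your final remark on bosonization is fine once realizations over finite abelian groups are exhibited, as the paper does in the Realizations subsections.
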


Concrete examples of such Hopf algebras are described in \S \ref{subsec:realizations-block}, \S \ref{subsec:realizations-block-pt}, \S \ref{subsec:realizations-block-pts}, \S \ref{subsec:realizations-blocks-pt} and \S \ref{subsec:realizations-paleblock-pt}. 
We also give a presentation by generators and relations of the Nichols algebras; references to this information and the dimensions
are also given in the Tables.

All braided vector spaces in this Theorem belong to 
the class described above except those in \ref{item:paleblock-point} that contain a pale block.

\begin{table}[ht]
\caption{{\small Finite-dimensional Nichols algebras of a block and a point}}\label{tab:toba-finitedim-block-point}
\begin{center}
\begin{tabular}{|c|c|c|c|c|c|c|}
\hline $V$ & {\scriptsize diagram}    & $q_{22}$  & $\ghost$ & $\NA(V)$  & {\small $\dim K$} 
& {\small $\dim \NA(V)$}  \\
\hline
$\lstr( 1, \ghost)$ & $\xymatrix{\boxplus \ar  @{-}[r]^{\ghost}  & \overset{1}{\bullet}}$  
& $1$  & \small{discrete}   & \S \ref{subsubsection:lstr-11disc} & $p^{\sa + 1}$   &    $p^{\sa + 3}$
\\\hline
$\lstr( -1, \ghost)$ & $\xymatrix{\boxplus \ar  @{-}[r]^{\ghost}  & \overset{-1}{\bullet}}$&  $-1$  & \small{discrete}  &  \S \ref{subsubsection:lstr-1-1disc} & $2^{\sa + 1}$   &   $2^{\sa + 1}p^{2}$
\\ \hline
$\lstr(\omega, 1)$& $\xymatrix{\boxplus \ar  @{-}[r]^{1}  & \overset{\omega}{\bullet}}$&
$\in \G'_3$  &  1 & \S \ref{subsubsection:lstr-1omega1} &   $3^3$ &   $3^3 p^{2}$
\\ \hline
$\lstr_{-}(1, \ghost)$
& $\xymatrix{\boxminus \ar  @{-}[r]^{\ghost}  & \overset{1}{\bullet}}$
& $1$  & \small{discrete} & \S \ref{subsubsection:lstr--11disc}  &  
$2^{\frac{\sa}{2}}p^{\frac{\sa}{2} + 1}$
& $2^{\frac{\sa}{2} + 2}p^{\frac{\sa}{2} + 3}$
\\ \hline
$\lstr_{-}(-1, \ghost)$
& $\xymatrix{\boxminus \ar  @{-}[r]^{\ghost}  & \overset{-1}{\bullet}}$
& $-1$  &  \small{discrete} &   \S \ref{subsubsection:lstr--1-1disc} 
&  $2^{\frac{\sa}{2} + 1}p^{\frac{\sa}{2}}$
&  $2^{\frac{\sa}{2} + 3}p^{\frac{\sa}{2} + 2}$
\\ \hline
$\cyc_1$&$\xymatrix{\boxminus \ar  @{-}[r]^{(-1, 1)}  &\overset{-1}{\bullet} }$ 
&  $-1$    &   1 & \S \ref{subsection:mild}  &  $16$  
&$64p^2$
\\\hline
\end{tabular}
\end{center}
\end{table}

\newcounter{Rowtable} 
\newcommand{\rowtable}[1]{\refstepcounter{Rowtable}\label{#1}}

\renewcommand{\arraystretch}{1.3}
\begin{table}[ht]
\caption{Finite-dimensional Nichols algebras of a block and several points,  $\omega \in \G'_3$. } \label{tab:toba-finitedim-block-points}
\begin{center}
\begin{tabular}{|c|c|c|c|}
\hline $V$ &  {\scriptsize diagram}    & $\NA(V)$     & {\scriptsize $dim \toba(V)$}   \\
\hline
 $\lstr(A_{\theta -1})$, &
$\xymatrix@C-10pt{\boxplus \ar  @{-}[r]^{1}  &\overset{-1}{\bullet} \ar  @{-}[r]^{-1}  & \overset{-1}{\circ}} \dots \xymatrix{
\overset{-1}{\circ} \ar  @{-}[r]^{-1}  & \overset{-1}{\circ} }$ &
\S \ref{subsubsection:lstr-a-n}      & $p^2 2^{6}$
\\ 
\cline{4-4}
$\theta > 2$&$\theta -1$ vertices &  & $p^2 2^{(\theta -1)(\theta-2)}$
\\ \hline
$\lstr(A_{2}, 2)$ &$\xymatrix{\boxplus \ar  @{-}[r]^{2}  &\overset{-1}{\bullet} \ar  @{-}[r]^{-1}  & \overset{-1}{\circ}}$    & \S \ref{subsubsection:lstr-a-22}  & $p^2 2^{12}$
\\ \hline
$\lstr(A(1\vert 0)_2; \omega)$ &$\xymatrix{\boxplus \ar  @{-}[r]^{1}  &\overset{-1}{\bullet} \ar  @{-}[r]^{\omega}  & \overset{-1}{\circ}}$ 
&  \S \ref{subsubsection:lstr-a(10)2}   & $p^2 2^73^4$
\\ \hline
$\lstr(A(1\vert 0)_1; \omega)$ &$\xymatrix{\boxplus \ar  @{-}[r]^{1}  &\overset{-1}{\bullet} \ar  @{-}[r]^{\omega^2 }  & \overset{\omega}{\circ}}$ 
& \S \ref{subsubsection:lstr-a(10)1} & $p^2 2^43^2$
\\ \hline
$\lstr(A(1\vert 0)_3; \omega)$ &$\xymatrix{\boxplus \ar  @{-}[r]^{1}  &\overset{\omega}{\bullet} \ar  @{-}[r]^{\omega^2 }  & \overset{-1}{\circ}}$  & \S \ref{subsubsection:lstr-a(10)3} & $p^2 2^73^4$
\\ \hline
$\lstr(A(1\vert 0)_1; r)$ &
$\xymatrix{\boxplus \ar  @{-}[r]^{1}  &\overset{-1}{\bullet} \ar  @{-}[r]^{r^{-1}}  & \overset{r}{\circ}}$, $r \in \G'_N, N > 3$   
&  \S \ref{subsubsection:lstr-a(10)1}  & $p^2 2^4N^2$
\\ \hline
$\lstr(A(2 \vert 0)_1; \omega)$ &
$\xymatrix{\boxplus \ar  @{-}[r]^{1}  &\overset{-1}{\bullet}\ar  @{-}[r]^{\omega}  & \overset{\omega^2}{\circ} \ar  @{-}[r]^{\omega}  & \overset{\omega^2}{\circ} }$ 
 & \S \ref{subsubsection:lstr-a(20)1} & $p^2 2^83^9$
\\ \hline
$\lstr(D(2 \vert 1); \omega)$ &
$\xymatrix{\boxplus \ar  @{-}[r]^{1}  &\overset{-1}{\bullet} \ar  @{-}[r]^{\omega}  & \overset{\omega^2}{\circ} \ar  @{-}[r]^{\omega^2}  & \overset{\omega}{\circ} }$ 
& \S \ref{subsubsection:lstr-D(21)} & $p^2 2^83^9$

\\ \hline
\end{tabular}
\end{center}

\end{table}

\begin{table}[ht]
\caption{{\small Finite-dimensional Nichols algebras of a pale block and a point.}}\label{tab:toba-finitedim-paleblock-point}
\begin{center}
\begin{tabular}{|c|c|c|c|c|c|c|}
\hline $V$ & $\epsilon$  & $\widetilde{q}_{12}$  & $q_{22}$ & $\NA(V)$  & {\small $\dim K$} 
& {\small $\dim \NA(V)$}  \\
\hline
$\eny_{p}(q)$ & $1$ & $1$  & $-1$  & 
\S \ref{subsubsec:paleblock-case1} & $2^{p}$   &    $2^p p^2$
\\\hline
$\eny_{+}(q)$ & $-1$ & $1$ & $1$ & 
\S \ref{subsubsec:paleblock-case-1} & $2p$   &   $2^3p$
\\ \hline
$\eny_{-}(q)$ & $-1$ & $1$ & $-1$ & 
\S \ref{subsubsec:paleblock-case-1} & $2p$ & $2^3p$
\\ \hline
$\eny_{\star}(q)$ & $-1$ & $-1$ & $-1$ & 
\S \ref{subsubsec:paleblock-case-1} & $2^4p^2$ & $2^6p^2$
\\ \hline
\end{tabular}
\end{center}
\end{table}

\subsection{Contents of the paper}\label{subsection:organization}
Section \ref{section:Preliminaries} is devoted to preliminaries. The next Sections contain the examples of finite-dimensional Nichols algebras and some realizations over abelian groups; each Section describes a family of braided vector spaces with a certain decomposition as we describe now. In Section \ref{sec:blocks} we compute Nichols algebras of a block. In Section \ref{sec:yd-dim3} we present examples of Nichols algebras corresponding to one block and one point, while in Section \ref{sec:yd-dim>3} we consider the case one block and several points. Section \ref{section:YD>3-severalblocks-1pt-poseidon} is devoted to examples of several blocks and one point. Finally in  Section \ref{sec:paleblock}
we give examples of finite-dimensional Nichols algebras whose braided vector
spaces decompose as one pale block and one point.  
\section{Preliminaries}\label{section:Preliminaries}

\subsection{Conventions}\label{subsection:conventions}
The $\qmb$-numbers are the polynomials
\begin{align*}
(n)_\qmb &=\sum_{j=0}^{n-1}\qmb^{j}, & (n)_\qmb^!&=\prod_{j=1}^{n} (j)_\qmb, &
\binom{n}{i}_\qmb & =\frac{(n)_\qmb^!}{(n-i)_\qmb^!(i)_\qmb^!} \in \Z[\qmb],
\end{align*}
$n\in \N$, $0 \leq i \leq n$. If  $q\in\ku$, then $(n)_q$, $(n)_q^!$, $\binom{n}{i}_q$ 
denote the  evaluations of $(n)_\qmb$, $(n)_\qmb^!$, $\binom{n}{i}_\qmb$ at $\qmb = q$.

Let $\G_N$ be the group of $N$-th roots of unity, and $\G_N'$ the subset of primitive roots of order $N$;
$\G_{\infty} = \bigcup_{N\in \N} \G_N$.
All the vector spaces, algebras and tensor products  are over $\ku$.

All Hopf algebras have bijective antipode.

\subsection{Yetter-Drinfeld modules}\label{subsection:yd} 
Let $\Gamma$ be an abelian group. We denote by $\widehat \Gamma$ the group of  characters  of $\Gamma$.
The category $\ydG$ of Yetter-Drinfeld modules over the group algebra $\ku \Gamma$ 
consists of $\Gamma$-graded $\Gamma$-modules,
the $\Gamma$-grading being denoted by $V = \oplus_{g\in \Gamma} V_g$; that is, $hV_g=V_g$ for all $g,h\in \Gamma$.
If $g\in \Gamma$ and $\chi \in \widehat\Gamma$, then  the one-dimensional vector space $\ku_g^{\chi}$,
with action and coaction given by $g$ and $\chi$, is in $\ydh$.
Let  $W \in \ydG$ and $(w_i)_{i\in I}$ a basis of $W$ consisting of homogeneous elements of degree $g_i$, $i\in I$,
respectively. Then there are skew derivations $\partial_i$, $i\in I$, of $T(W)$ such that
for all $x,y\in T(W)$, $i, j \in I$
\begin{align}\label{eq:skewderivations}
\partial _i(w_j) & =\delta _{ij},&
\partial _i(xy) &= \partial _i(x)(g_i\cdot y)+x\partial _i(y).
\end{align}

For a definition of Yetter-Drinfeld modules over arbitrary Hopf algebras we
refer e.g. to \cite[11.6]{Ra}. 

\subsection{Nichols algebras}\label{subsection:nichols}
Nichols algebras are graded Hopf algebras $\toba = \oplus_{n\ge 0} \toba^n$ in $\ydh$ coradically graded and generated
in degree one. 
They are completely determined by 
$V := \toba^1 \in \ydh$
and it is customary to denote $\toba = \toba(V)$.
If $W\in \ydG $ as in Subsection~\ref{subsection:yd}, then
	the skew-derivations $\partial_i$ induce skew-derivations on $\toba(W)$.
	Moreover,	an element $w\in \toba^k(W)$, $k\ge 1$, is zero if and only if
$\partial_i(w)=0$ in $\toba(W)$ for all $i\in I$.
A pre-Nichols algebra of $V$ is a graded Hopf algebra in $\ydh$ generated in degree one, with the one-component
 isomorphic to $V$.

\begin{example}\label{exa:dim1}
	Let $V$ be of dimension 1 with braiding $c = \epsilon \id$. 
	Let $N$ be the smallest natural number such that $(N)_{\epsilon} = 0$.
	Then $\toba(V) = \ku [T]/\langle T^N\rangle$, or $\toba(V) = \ku [T]$ if such
	$N$ does not exist.
\end{example}

\medbreak
A braided vector space $V$ is of diagonal type if there exist a basis
$(x_i)_{i\in \I_\theta }$ of $V$ and $\bq = (q_{ij})_{i,j\in \I_\theta }\in \Bbbk^{\theta \times \theta }$
such that $q_{ij}\ne 0$ and $c(x_i\otimes x_j)=q_{ij}x_j\otimes x_i$ for all $i,j\in \I = \I_\theta $.
Given a braided vector space
$V$
of diagonal type  with a basis $(x_i)$, we denote in $T(V)$, or $\NA(V)$, or any intermediate Hopf algebra,
\begin{align}\label{eq:xijk}
x_{ij} &= (\ad_c x_i)\, x_j,& x_{i_1i_2 \dots i_M} &= (\ad_c x_{i_1})\, x_{i_2 \dots i_M},
\end{align}
for $i,j,i_1, \dots, i_M \in \I$, $M\ge 2$.
A braided vector space $V$ of diagonal type is of Cartan type if there exists 
a generalized Cartan matrix $\ba = (a_{ij})$ such that $q_{ij}q_{ji} = q_{ii}^{a_{ij}}$ for all $i\neq j$.

\begin{theorem}\label{thm:nichols-diagonal-finite-gkd} 
	If $V$ is of Cartan type with matrix $\ba$ that is not finite, then $\dim  \NA(V) = \infty$.
\end{theorem}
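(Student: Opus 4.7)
The plan is to exhibit infinitely many linearly independent elements of $\NA(V)$ using Kharchenko's PBW theorem together with Heckenberger's theory of the Weyl groupoid of a Nichols algebra of diagonal type, adapted to characteristic $p>2$.

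First, I would reduce to the connected case. Since $\ba$ is a generalized Cartan matrix that is not of finite type, some connected component $\ba'$ of its Dynkin diagram is not of finite type. The braided subspace $V'\subseteq V$ spanned by the corresponding basis vectors $x_i$ is again of diagonal Cartan type with matrix $\ba'$, and the canonical morphism $\NA(V')\hookrightarrow \NA(V)$ is injective (a standard fact for Nichols subalgebras associated with a braided subspace of diagonal type). So I may assume from the outset that $\ba$ is connected and not of finite type.

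Next, I would split into two subcases. If $\ba$ has a $2\times 2$ principal submatrix with $a_{ij}a_{ji}\ge 4$, then the rank-$2$ sub-Nichols algebra generated by $x_i,x_j$ is of Cartan type with a non-finite rank-$2$ Cartan matrix, and by the rank-$2$ classification of Heckenberger--Wang \cite{HW} it is already infinite-dimensional, finishing the argument via the embedding above. Otherwise every $2\times 2$ principal submatrix of $\ba$ is of finite type while $\ba$ itself is not, which forces $\ba$ to be of affine Cartan type. In this second subcase I would invoke the Weyl groupoid of $\NA(V)$: its construction and the Lusztig-like reflection isomorphisms extend to $p>2$ through \cite{HW}, and in Cartan type they preserve the Cartan condition and act on the braiding data by the usual Kac--Moody--Weyl formulas. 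Consequently the orbit of the simple real roots is in bijection with the real root system of the affine Kac--Moody algebra attached to $\ba$, which is infinite. Kharchenko's PBW theorem then supplies a PBW generator $x_\alpha\in\NA(V)$ for every positive real root $\alpha$.

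The last step is to check that each PBW generator has height at least $2$. For any root $\alpha$, the scalar $q_{\alpha\alpha}\in\kut$ is either a nontrivial root of unity of order $N\ge 2$ (giving height $N$), equal to $1$ (giving height $p\ge 3$, since $(p)_1=0$ in $\ku$ when $\car\ku=p$), or generic (giving infinite height). Hence infinitely many PBW generators of height $\ge 2$ yield an infinite PBW basis, so $\dim\NA(V)=\infty$. The main obstacle will be the careful transfer of the Weyl-groupoid machinery --- preservation of Cartan type under reflections, Kharchenko's PBW theorem, and the identification of the reflection orbit with the affine Kac--Moody real root system --- to characteristic $p>2$; all ingredients are available in and around \cite{HW}, but marshalling them correctly in the affine subcase is the delicate point.
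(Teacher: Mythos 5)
Your core argument --- Weyl-groupoid reflections preserve Cartan type, the real roots of $\NA(V)$ are those of the Kac--Moody root system of $\ba$, that set is infinite when $\ba$ is not of finite type, and each real root contributes a PBW generator of height at least $2$ (height $p$ when $q_{\alpha\alpha}=1$) --- is exactly the argument the paper invokes: its proof is a one-line citation of \cite[Proposition 3.1]{aah-diag} together with the observation that that argument is characteristic-free because the real root system of $\ba$ is infinite. So in substance you have reconstructed the paper's proof.

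There is, however, one incorrect step: the claim that if every $2\times 2$ principal submatrix of a connected non-finite $\ba$ is of finite type, then $\ba$ must be of affine type. This fails already in rank $3$: the generalized Cartan matrix
\begin{align*}
\begin{pmatrix} 2 & -1 & -1 \\ -1 & 2 & -2 \\ -1 & -1 & 2 \end{pmatrix}
\end{align*}
has all $2\times 2$ principal submatrices of type $A_2$ or $B_2$ (hence finite), yet its determinant is $-3$, so it is of indefinite (hyperbolic) type and contains no affine principal submatrix at all. Your dichotomy therefore does not cover all cases. Fortunately the gap is harmless: nothing in your ``affine'' branch actually uses affineness. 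For \emph{any} connected generalized Cartan matrix not of finite type the Weyl group is infinite and hence the set of real roots is infinite, so the same reflection-plus-Kharchenko argument applies verbatim. I would simply delete the case split (including the appeal to the rank-$2$ classification of \cite{HW}, which is likewise unnecessary) and run the root-counting argument for an arbitrary connected non-finite $\ba$, as \cite[Proposition 3.1]{aah-diag} does.
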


\pf The argument in \cite[Proposition 3.1]{aah-diag} is characteristic-free and applies here
because there are infinite real roots in the root system of $\ba$.
\epf

\section{Blocks}\label{sec:blocks}
We consider braided vector spaces $\cV(\epsilon,2)$  with braiding 
\eqref{equation:basis-block}, $\epsilon^2 = 1$. 

\subsection{The Jordan plane}\label{subsection:jordanian}

Here we deal with  $\cV=\cV(1,2)$. In characteristic 0, $\toba(\cV)$ is the well-known algebra presented by 
$x_1$ and $x_2$ with the relation \eqref{eq:rels B(V(1,2))}.
In positive characteristic, $\toba(\cV)$ is a truncated version of that algebra.  

\begin{lemma} \label{prop:1block} \cite{clw}
$\toba(\cV)$ is presented by generators $x_1,x_2$ and relations
\begin{align}\label{eq:rels B(V(1,2))}
&x_2x_1-x_1x_2+\frac{1}{2}x_1^2,
\\\label{eq:rels B(V(1,2))-x1p}
&x_1^p,
\\
\label{eq:rels B(V(1,2))-x2p}
&x_2^p.
\end{align}
Also $\dim \toba(\cV) = p^2$ and $\{ x_1^a x_2^b: 0 \le a,b< p\}$ is a basis of $\toba(\cV)$. \qed
\end{lemma}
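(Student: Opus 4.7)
The plan is to follow the proof of \cite{clw}. Three ingredients need to be established: that the quadratic relation \eqref{eq:rels B(V(1,2))} holds in $\toba(\cV)$; that the $p$-th powers $x_1^p$ and $x_2^p$ vanish there; and that the $p^2$ remaining monomials $x_1^a x_2^b$, $0\le a,b<p$, are linearly independent.

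First, verify that $r:=x_2x_1-x_1x_2+\tfrac{1}{2}x_1^2$ is primitive in $T(\cV)$ with respect to the braided coproduct induced by \eqref{equation:basis-block} at $\epsilon=1$. This is the classical Jordan plane check; it is characteristic-free (one only needs $p\ne 2$ to invert $2$), so $r$ maps to zero in $\toba(\cV)$. Let $B:=T(\cV)/\langle r\rangle$ be the resulting pre-Nichols algebra. Using the commutation rule $x_2x_1=x_1x_2-\tfrac{1}{2}x_1^2$, every element of $B$ rewrites as a linear combination of monomials $x_1^ax_2^b$; a Diamond Lemma argument (or an inductive computation with $\partial_1,\partial_2$) confirms that these monomials form a basis of $B$.

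Second, show that $x_1^p$ and $x_2^p$ are primitive in $B$. For $x_1^p$: the self-braiding of $x_1$ is trivial ($q_{11}=1$), so the braided coproduct gives $\Delta(x_1^p)=\sum_k\binom{p}{k}x_1^k\otimes x_1^{p-k}=x_1^p\otimes 1+1\otimes x_1^p$ by Fermat. For $x_2^p$: compute $\Delta(x_2^n)$ inductively, using the identity $c(x_2\otimes x_2)=(x_2+x_1)\otimes x_2$ together with the relation $r$, to obtain a binomial-type expansion whose mixed terms all collapse when $n=p$ (the corrections involving $x_1$ group into perfect $p$-th powers). Since primitive elements of homogeneous degree $\ge 2$ must vanish in any Nichols algebra, we conclude $x_1^p,\,x_2^p\in\ker(T(\cV)\twoheadrightarrow \toba(\cV))$.

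Finally, the surjection $B/\langle x_1^p,x_2^p\rangle\twoheadrightarrow \toba(\cV)$ has domain of dimension at most $p^2$, spanned by $\{x_1^ax_2^b:0\le a,b<p\}$. Applying iterated skew-derivations $\partial_1^a\partial_2^b$ to separate these classes inside $\toba(\cV)$ yields the reverse inequality, proving both the presentation and the claimed PBW basis. The main technical obstacle is verifying that $x_2^p$ is primitive: since the self-braiding on $x_2$ is unipotent rather than diagonal, the naive Fermat vanishing does not apply and one must carefully track the $x_1$-correction terms propagating through the iterated braided coproduct.
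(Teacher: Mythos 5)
Your outline is correct and is essentially the argument of \cite{clw}, which is all the paper itself offers for this lemma (it is stated with a bare citation and no proof). The same strategy --- primitivity of the quadratic relation and of the $p$-th powers, a PBW spanning argument for the quotient, and separation of the monomials $x_1^ax_2^b$ by the skew-derivations $\partial_1,\partial_2$ --- is exactly what the paper carries out in detail for the super Jordan plane in Proposition \ref{prop:-1block}, where the analogue of your ``main technical obstacle'' (the unipotent self-braiding of $x_2$) is resolved by the explicit computation of $\partial_2(x_2^{2n})$.
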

In characteristic 2, the relations of $\toba(\cV)$ are different.  

\smallbreak
Let $\Gamma = \Z/p = \langle g \rangle$.
We realize  $\cV$ in $\ydG$ by 
$g\cdot x_1 = x_1$, $g\cdot x_2 = x_2 + x_1$, $\deg x_i = g$, $i\in \I_2$.
Thus  the Hopf algebra $\toba(\cV) \# \ku \Gamma$ has dimension $p^3$.

\subsection{The  super Jordan plane}\label{subsection:super-jordanian}
Let $\cV = \cV(-1,2)$ be the braided vector space with basis $x_1$, $x_2$ and braiding
\begin{align}\label{eq:bloque2}
c(x_i \otimes x_1) &= - x_1 \otimes x_i,& c(x_i \otimes x_2) &= (- x_2 + x_1) \otimes x_i,& i&\in \I_2.
\end{align}
Let $g$ be a generator of the cyclic group $\Z$.
We realize  $\cV(-1,2)$ in $\ydz$ by $g\cdot x_1 = -x_1$, $g\cdot x_2 = - x_2 + x_1$, $\deg x_i = g$, $i\in \I_2$.  As in \eqref{eq:xijk}, 
\begin{align}\label{eq:x12}
x_{21} &= (\ad_c x_2)\, x_1 = x_2x_1 + x_1 x_2.
\end{align}
The Nichols algebra $\toba(\cV(-1,2)) = T(\cV(-1,2)) / \cJ(\cV(-1,2))$ (called the super Jordan plane) was studied in \cite[3.3]{aah-triang} over fields of characteristic 0. 
Assuming $p > 2$, the basic features of $\toba(\cV(-1,2))$ are summarized here:

\begin{prop} \label{prop:-1block} The defining ideal
$\cJ(\cV(-1,2))$ is generated by
\begin{align}\label{eq:rels-B(V(-1,2))-1}
&x_1^2, \\
\label{eq:rels-B(V(-1,2))-2}
&x_2x_{21}- x_{21}x_2 - x_1x_{21},
\\ \label{eq:rels-B(V(-1,2))-4} &x_{21}^p,
\\\label{eq:rels-B(V(-1,2))-3} &x_2^{2p}.
\end{align}
The set $B = \{x_1^a x_{21}^bx_2^c: a\in\I_{0,1},
b\in\I_{0, p-1}, c\in\I_{0, 2p-1}\}$ is a basis of $\toba(\cV)$ and $\dim \toba(\cV) = 4p^2$.
\end{prop}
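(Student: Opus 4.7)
The plan is the standard three-step approach. \textbf{Step A:} Show each relation \eqref{eq:rels-B(V(-1,2))-1}--\eqref{eq:rels-B(V(-1,2))-3} lies in the defining ideal $\cJ(\cV)$, yielding a surjection $\cH := T(\cV)/\langle\eqref{eq:rels-B(V(-1,2))-1},\eqref{eq:rels-B(V(-1,2))-2},\eqref{eq:rels-B(V(-1,2))-4},\eqref{eq:rels-B(V(-1,2))-3}\rangle \twoheadrightarrow \toba(\cV)$. \textbf{Step B:} Show that $B$ spans $\cH$ via normal-form reductions, so $\dim \toba(\cV) \leq |B| = 4p^2$. \textbf{Step C:} Prove $B$ is linearly independent in $\toba(\cV)$ via iterated skew-derivations, concluding $\cH \simeq \toba(\cV)$ and the claimed dimension.

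For Step A, relations \eqref{eq:rels-B(V(-1,2))-1} and \eqref{eq:rels-B(V(-1,2))-2} are known in characteristic zero \cite[\S 3.3]{aah-triang}; their verification is characteristic-free through $\partial_i$. Let $\widetilde{\toba}$ denote the pre-Nichols algebra they cut out. For \eqref{eq:rels-B(V(-1,2))-4}, direct computation gives $\partial_1(x_{21}) = x_1$, $\partial_2(x_{21}) = 0$, and $g \cdot x_{21} \equiv x_{21} \pmod{x_1^2}$; hence $x_1$ commutes with $x_{21}$ in $\widetilde{\toba}$, and an induction yields $\partial_1(x_{21}^n) = n x_1 x_{21}^{n-1}$, which vanishes at $n = p$, so $x_{21}^p \in \cJ(\cV)$. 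For \eqref{eq:rels-B(V(-1,2))-3}, $\partial_1(x_2) = 0$ propagates so $\partial_1(x_2^{2p}) = 0$, while the recursion $\partial_2(x_2^{n+1}) = (g\cdot x_2)^n + x_2 \partial_2(x_2^n)$ produces the closed form
\begin{align*}
\partial_2(x_2^n) = \sum_{k=0}^{n-1} x_2^k\,(-x_2 + x_1)^{n-1-k},
\end{align*}
whose vanishing at $n = 2p$ in $\widetilde{\toba}$ is the technical core of the proof. For Step B, the definition of $x_{21}$ gives $x_2 x_1 = x_{21} - x_1 x_2$ (moving $x_1$'s left), relation \eqref{eq:rels-B(V(-1,2))-2} moves $x_2$'s past $x_{21}$'s, the commutation of $x_1$ with $x_{21}$ orders $x_1$'s before $x_{21}$'s, and the three truncations bound $a<2$, $b<p$, $c<2p$; this gives $|B| = 2\cdot p \cdot 2p = 4p^2$. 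For Step C, iterating $\partial_1$ and $\partial_2$ on normal-form elements as in \cite[\S 3.3]{aah-triang} separates the multi-degree and detects each element of $B$ as nonzero.

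\textbf{Main obstacle:} verifying $\partial_2(x_2^{2p}) = 0$ in $\widetilde{\toba}$. Reducing the closed-form sum to the normal form $\{x_1^{\alpha} x_{21}^{\beta} x_2^{\gamma}\}$ produces a combinatorial identity whose terms do not cancel via standard quantum-binomial mechanisms, because the $g$-action on $V$ is a nontrivial Jordan block rather than a scalar. I expect to proceed by an induction that expands $(-x_2 + x_1)^{n-1-k}$ in normal form using $x_1^2 = 0$ together with the super-Jacobi relation \eqref{eq:rels-B(V(-1,2))-2}, then groups the results by $(\alpha, \beta, \gamma)$-degree; the expected outcome is that each coefficient for $n = 2p$ acquires a factor of $p$ via a Vandermonde-like cancellation traceable to the nilpotence of $g + 1$ on $V$.
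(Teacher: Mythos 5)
Your overall architecture (surjection, spanning, linear independence via skew-derivations) matches the paper's, and you have correctly located the crux: proving that $\partial_2(x_2^{2p})$ vanishes in the pre-Nichols algebra cut out by $x_1^2$ and the relation \eqref{eq:rels-B(V(-1,2))-2}. But at precisely that point the proposal stops being a proof: you write that you ``expect'' a Vandermonde-like cancellation producing a factor of $p$, without exhibiting it. This is a genuine gap, not a routine verification. The paper closes it by a specific device: set $a=x_{21}$, $b=x_2^2$ and observe that the relations force $ax_1=x_1a$, $bx_1=x_1(b+a)$ and $ba=a(a+b)$; from these one proves by induction the closed forms $(b-a)^n=b^n-n\,ab^{n-1}$ and $\sum_{i=1}^{n}(b+a)^{i-1}(b-a)^{n-i}=n\,b^{n-1}$, whence $\partial_2(x_2^{2n})=n\,x_1x_2^{2(n-1)}$ and the vanishing of \eqref{eq:rels-B(V(-1,2))-3} at $n=p$. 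Your proposed route via the odd-step recursion and a normal-form expansion of $(-x_2+x_1)^{n-1-k}$ may well be workable, but the cancellation is not of quantum-binomial type and you have not produced it.

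The same missing formula undermines your Step C. To see that the exponent bound $c<2p$ in $B$ is sharp you must show $x_2^c\neq 0$ for all $c\le 2p-1$; the paper gets this exactly from $\partial_2(x_2^{2n})=n\,x_1x_2^{2(n-1)}$ and its odd-power companion $\partial_2(x_2^{2n+1})=-n\,x_1x_2^{2n-1}+x_2^{2n}$, combined with a degree argument, after first using \eqref{eq:-1del1} to reduce any putative minimal-degree dependence among the PBW monomials to one involving only powers of $x_2$. Your Step C asserts that iterated derivations ``detect each element of $B$ as nonzero'' but gives no mechanism for the pure $x_2$-powers, which is where the content lies. In short: the plan is the right one and essentially the paper's, but the two load-bearing computations are left unperformed.
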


\pf 
Since $\partial_2(x_{21})=0$, we have that  $\partial_2(x_{21}^n)=0$ for every $n\in \N$. Also
$\partial_1(x_{21})=x_1$ and $g\cdot x_{21} = x_{21}$.
Both \eqref{eq:rels-B(V(-1,2))-1} and \eqref{eq:rels-B(V(-1,2))-2} are $0$ in $\toba(\cV)$
being annihilated by $\partial_1$ and $\partial_2$, cf. \eqref{eq:skewderivations}.
From \eqref{eq:rels-B(V(-1,2))-1} and \eqref{eq:rels-B(V(-1,2))-2} we see that in $\toba(\cV)$
\begin{align}\label{eq:rels-B(V(-1,2))-5}
	x_2^2 x_1 &= x_1 (x_2^2 + x_{21}), \\
\label{eq:rels-B(V(-1,2))-dos}
x_{21}x_1 &= x_1x_2x_1 = x_1x_{21}.
\end{align}
By the preceding, we have
\begin{align*}
\partial_1(x_{21}^n) &= \sum_{1\le i \le n} x_{21}^{i-1} \partial_1(x_{21}) x_{21}^{n-i} = n x_1x_{21}^{n-1}.
\end{align*}
Hence $x_{21}^p = 0$, i.e. \eqref{eq:rels-B(V(-1,2))-4} holds. 
Next we prove \eqref{eq:rels-B(V(-1,2))-3}.
Clearly $\partial_1(x_{2}^n) = 0$ for every $n\in \N$. We observe that 
\begin{align*}
g\cdot x_{2}^2 &= (- x_2 + x_1)^2 = x_{2}^2 - x_{21}, & \partial_2(x_{2}^2) &= g\cdot x_2 + x_2 = x_1.
\end{align*}
Setting for simplicity $a := x_{21}$ and $b := x_2^2$, we have for any $n \in \N$:
\begin{align*}
\partial_2(x_{2}^{2n}) &= \sum_{1\le i \le n} x_{2}^{2(i-1)} \partial_2(x_{2}^2) \left(g\cdot x_{2}^{2(n-i)}\right) = 
\sum_{1\le i \le n} b^{i-1} x_1 \left(b -a\right)^{n-i}.
\end{align*}
By  \eqref{eq:rels-B(V(-1,2))-2}, \eqref{eq:rels-B(V(-1,2))-5} and \eqref{eq:rels-B(V(-1,2))-dos} we have
\begin{align} \label{eq:a-b-conditions}
ax_{1} &= x_{1}a, & b x_{1} &= x_1(b + a), & 
b a &= a(a + b), 
\end{align}
hence
\begin{align*}
\partial_2(x_{2}^{2n}) &= x_1\sum_{1\le i \le n} \left(b + a\right)^{i-1}  \left(b -a\right)^{n-i}.
\end{align*}
We prove recursively that for all $n \in \N$
\begin{align}\label{eq:a-b-formulae}
\left(b -a\right)^{n} &= b^n-n ab^{n-1}, &
A_n := \sum_{i \in \I_n} \left(b + a\right)^{i-1}  \left(b -a\right)^{n-i} &= n b^{n-1}.
\end{align}
The case $n = 1$ is evident.
We start with the first identity:
\begin{align*}
\left(b -a\right)^{n+1} &= (b-a)\left(b -a\right)^{n} = (b-a)(b^n-n ab^{n-1})
\\ & = b^{n+1} -n bab^{n-1} - ab^n + n a^2b^{n-1}  = b^{n+1} -(n + 1)ab^{n} 
\end{align*}
as desired.  For the second identity we use the first:
\begin{align*}
A_{n + 1} & =  \left(b -a\right)^{n} + (b+ a)A_{n} = b^n-n ab^{n-1} + (b+ a) n b^{n-1}
=(n+1) b^n.
\end{align*}
The claim is proved; summarizing we have
\begin{align}\label{eq:rels-B(V(-1,2))-evenpower}
\partial_2(x_{2}^{2n}) &= n x_1 x_2^{2(n-1)}.
\end{align}
In particular, this implies \eqref{eq:rels-B(V(-1,2))-3}.

\medbreak
We now argue as in \cite[3.3.1]{aah-triang}.
The quotient $\cBt$ of $T(\cV)$ by \eqref{eq:rels-B(V(-1,2))-1}, \eqref{eq:rels-B(V(-1,2))-2}, \eqref{eq:rels-B(V(-1,2))-4} and \eqref{eq:rels-B(V(-1,2))-3} projects onto $\toba(\cV)$ and
the subspace $I$ spanned by $B$ is a left ideal of $\cBt$, by  \eqref{eq:rels-B(V(-1,2))-2}, \eqref{eq:rels-B(V(-1,2))-dos}.
Since $1\in I$, $\cBt$ is spanned by $B$. To prove that $\cBt \simeq \toba(\cV)$, we just need to show that
$B$ is linearly independent in $\toba(\cV)$. We claim that this is equivalent to prove that $B'=\{x_2^c x_{21}^b x_1^a: a\in\{0,1\}, b\in\I_{0, p-1}, c\in\I_{0, 2p-1}\}$ is linearly independent.
Indeed, $\cBt$ is spanned by $B'$ since the subspace spanned $B'$ is also a left ideal; if $B'$ is linearly independent, then
the dimension of $\cBt$ is $4p^2$, so $B$ should be linearly independent and vice versa.
Suppose that there is a non-trivial linear combination
of elements of $B'$ in $\toba(\cV)$ of minimal degree. As
\begin{align} \label{eq:-1del1}
\partial_1(x_2^c x_{21}^b)&= b \,  x_2^c x_{21}^{b-1} x_1, & \partial_1(x_2^c x_{21}^b x_1)&= x_2^c x_{21}^b,
\end{align}
such linear combination does not have terms with $a$ or $b$ greater than 0. We claim that the elements 
$x_2^c$, $c\in\I_{0, 2p-1}$,
are linearly independent, yielding a contradiction.
By homogeneity it is enough to prove that they are $\neq 0$. If $c$ is even this follows from \eqref{eq:rels-B(V(-1,2))-evenpower}.
If $c =2n +1$ with $n < p$, then
\begin{align*}
\partial_2(x_{2}^{2n + 1}) &= \partial_2(x_{2}^{2n}) g\cdot x_2 +  x_{2}^{2n} = 
-n x_1 x_2^{2n-1} + x_{2}^{2n}.
\end{align*}
Again a degree argument gives the desired claim.
\epf

Let $\Gamma = \Z/2p$. We may realize  $\cV(-1,2)$ in $\ydG$ by the same formulas as above; thus 
$\toba(\cV) \# \ku \Gamma$ is a pointed Hopf algebra of dimension $8p^3$.

\subsection{Realizations}\label{subsec:realizations-block}

Let $H$ be a Hopf algebra. 
A \emph{YD-pair} for $H$  is a pair
$(g, \chi) \in G(H) \times \Alg(H, \ku)$ such that
\begin{align}\label{eq:yd-pair}
\chi(h)\,g  &= \chi(h\_{2}) h\_{1}\, g\, \Ss(h\_{3}),& h&\in H.
\end{align}
Let $\ku_g^{\chi}$ be a one-dimensional vector space with $H$-action and $H$-coaction  given by $\chi$ and $g$
respectively; then \eqref{eq:yd-pair} says that  $\ku_g^{\chi} \in \ydh$.

If $\chi\in \Alg(H, \ku)$, then the space of $(\chi, \chi)$-deri\-vations is
\begin{align*}
\Der_{\chi,\chi}(H, \ku) &= \{\eta\in H^*: \eta(h\ell) = \chi(h)\eta(\ell) + \chi(\ell)\eta(h) \, \forall h,\ell \in H\}.
\end{align*}

A \emph{YD-triple} for $H$ is a collection $(g, \chi, \eta)$ where
$(g, \chi)$, is a YD-pair for $H$, $\eta \in \Der_{\chi,\chi}(H, \ku)$, $\eta(g) = 1$ and
\begin{align}\label{eq:YD-triple}
\eta(h) g_1 &= \eta(h\_2) h\_1 g_2 \Ss(h\_3), & h&\in H.
\end{align}
Given a YD-triple $(g, \chi, \eta)$ we define $\cV_g(\chi,\eta) \in \ydh$ as the
 vector space with a basis $(x_i)_{i\in\I_2}$, whose $H$-action and $H$-coaction are given by
\begin{align*}
h\cdot x_1 &= \chi(h) x_1,& h\cdot x_2&=\chi(h) x_2 + \eta(h)x_{1},& \delta(x_i) &= g\otimes x_i,&
h&\in H, \, i\in \I_2;
\end{align*}
the compatibility is granted by \eqref{eq:yd-pair}, \eqref{eq:YD-triple}.
As a braided vector space, $\cV_g(\chi, \eta)\simeq \cV(\epsilon,2)$,
$\epsilon = \chi(g)$. 

Consequently, if $H$ is finite-dimensional and $\epsilon^2 = 1$,
then $\toba (\cV_g(\chi, \eta)) \# H$ is a Hopf algebra satisfying
\begin{align}
\dim \big(\toba (\cV_g(\chi, \eta)) \# H\big) &= \begin{cases}
&p^2\dim H, \text{ when } \epsilon = 1,  \\
&4p^2 \dim H, \text{ when } \epsilon = -1.
\end{cases}
\end{align}

\subsection{Exhaustion in rank $2$}\label{subsection:exhaustion-rank2}

We recall some facts from \cite[\S 3.4]{aah-triang}.

\smallbreak
Let $H$ be a Hopf algebra with bijective antipode and $V\in \ydh$.
Let  $0 =V_0 \subsetneq V_1 \dots \subsetneq V_d = V$ be a flag of Yetter-Drinfeld submodules 
with $\dim V_i=\dim V_{i-1}+1$ for all $i$.
Then $V^{\textrm{diag}} := \gr V$ is of diagonal type. If $\toba$ is a pre-Nichols algebra of $V$, then
it is a graded filtered Hopf in $\ydh$ and
$\Bdiag := \gr \toba$ is a pre-Nichols algebra of  $V^{\textrm{diag}}$.

\begin{prop} \label{prop:exhaustion} 
Let $\epsilon\in\Bbbk^{\times}$.  If $\dim \toba(\cV(\epsilon, 2)) < \infty$, then $\epsilon^2 =1$.
\end{prop}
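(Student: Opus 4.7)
The plan is to reduce to the rank-two diagonal case via the flag $0 \subset \ku x_1 \subset V := \cV(\epsilon,2)$ and invoke Theorem \ref{thm:nichols-diagonal-finite-gkd}. First I would observe that $\ku x_1$ is a YD-submodule in any realization (stability on both sides follows directly from \eqref{equation:basis-block}) and compute $V^{\text{diag}} := \gr V$: reading off \eqref{equation:basis-block} yields that $V^{\text{diag}}$ is of diagonal type with braiding matrix $(q_{ij})$ satisfying $q_{ij} = \epsilon$ for all $i,j\in\I_2$. In particular $q_{11} = q_{22} = \epsilon$ and $\widetilde q_{12} := q_{12}q_{21} = \epsilon^2$. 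By the filtered/graded discussion of \S \ref{subsection:exhaustion-rank2}, if $\dim \toba(V) < \infty$ then $\dim \Bdiag < \infty$, and since $\Bdiag$ surjects onto $\toba(V^{\text{diag}})$, we get $\dim \toba(V^{\text{diag}}) < \infty$.

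Next I would split into cases on $\epsilon$. If $\epsilon$ has infinite order in $\kut$, the one-point subalgebra $\ku[\bar x_2] \subset \toba(V^{\text{diag}})$ is a polynomial ring, contradicting finite-dimensionality. If $\ord(\epsilon) = N \geq 4$ (necessarily with $\gcd(N,p)=1$), then $V^{\text{diag}}$ is of Cartan type with Cartan matrix $\bigl(\begin{smallmatrix} 2 & 2-N \\ 2-N & 2\end{smallmatrix}\bigr)$, which is affine for $N=4$ and indefinite for $N \geq 5$; Theorem \ref{thm:nichols-diagonal-finite-gkd} then forces $\dim \toba(V^{\text{diag}}) = \infty$, again a contradiction. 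Finally, $\ord(\epsilon) \in \{1,2\}$ gives $\epsilon^2 = 1$ immediately.

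The remaining case is $\ord(\epsilon) = 3$, say $\epsilon = \omega \in \G_3'$ (which forces $p \neq 3$). Here $V^{\text{diag}}$ is of finite Cartan type $A_2$ at $\omega$ with $\dim \toba(V^{\text{diag}}) = 27$, so the diagonal reduction does \emph{not} conclude. To rule it out, I would work directly in $\toba(\cV(\omega,2))$, realized in $\ydz$ via $g \cdot x_1 = \omega x_1$, $g \cdot x_2 = \omega x_2 + x_1$, $\deg x_i = g$. Using the skew derivations \eqref{eq:skewderivations}, a short computation gives
\begin{align*}
\partial_2(x_2^3) &= \omega x_1 x_2 - \omega^2 x_2 x_1 + x_1^2 \neq 0,
\end{align*}
so the Cartan relation $x_2^3 = 0$ of $\toba(V^{\text{diag}})$ fails to lift to $\toba(\cV(\omega,2))$. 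Following the pattern of \cite[\S 3.4]{aah-triang}, the propagation of this obstruction together with the non-trivial Jordan component of $g$ on $x_{21} := x_2x_1 - \omega x_1 x_2$ should produce an infinite linearly independent family, forcing $\Bdiag$ to be infinite-dimensional and contradicting $\dim \Bdiag < \infty$.

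The hard part will be exactly this last step: the diagonal type $A_2$ at $\omega$ has a finite-dimensional Nichols algebra, so the Cartan/diagonal reduction is insufficient at the borderline case and must be replaced by a direct ``relations do not lift'' analysis internal to $\toba(\cV(\omega,2))$. Once this is settled, the proposition follows.
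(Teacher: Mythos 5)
Your reduction via the flag $0 \subset \ku x_1 \subset \cV(\epsilon,2)$, the identification of $\Vdiag$ as the diagonal braided vector space with $q_{ij}=\epsilon$ for all $i,j$, and the inequality $\dim\toba(\Vdiag)\le\dim\toba(\cV(\epsilon,2))$ are exactly the paper's setup in \S \ref{subsection:exhaustion-rank2}. Your treatment of $\epsilon\notin\G_{\infty}$ (via the one-dimensional subspace and Example \ref{exa:dim1}) and of $\ord(\epsilon)=N\ge 4$ (Cartan type with non-finite matrix, Theorem \ref{thm:nichols-diagonal-finite-gkd}) coincides with Steps 1 and 2 of the paper's proof, and you correctly isolate $\epsilon\in\G'_3$ as the only case where the diagonal degeneration (finite Cartan type $A_2$) gives no obstruction.

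That last case is where your argument has a genuine gap. Your computation $\partial_2(x_2^3)=\omega x_1x_2-\omega^2 x_2x_1+x_1^2$ is correct, and this element is indeed nonzero in $\toba^2(\cV(\omega,2))$ (one checks $\ker(\id+c)=0$ in degree $2$), so $x_2^3\neq 0$ and the truncation relation of $\toba(\Vdiag)$ does not lift. But this only shows that $\Bdiag$ is a pre-Nichols algebra of $\Vdiag$ \emph{strictly larger} than $\toba(\Vdiag)$; a proper pre-Nichols algebra can perfectly well still be finite-dimensional, so "the relation fails to lift'' does not by itself force $\dim\Bdiag=\infty$. To conclude one must actually produce an infinite linearly independent family inside $\toba(\cV(\omega,2))$, and your "propagation of this obstruction \dots should produce'' is a statement of intent rather than an argument -- as you yourself acknowledge. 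The paper closes exactly this case by invoking \cite[\S 3.5 -- Step 3]{aah-triang}, whose direct derivation argument is observed to hold verbatim in characteristic $p>2$; without that (or an equivalent explicit construction of the infinite independent family), your proof of the proposition is incomplete precisely at the one step that cannot be delegated to the diagonal reduction.
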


\pf
Let $\cV = \cV(\epsilon, 2)$; it has a flag as above 
and $\Vdiag$ is the braided vector space of diagonal type
with matrix $(q_{ij})_{i,j\in\I_2}$, $q_{ij}=\epsilon$ for all $i,j\in\I_2$. Hence
\begin{align}\label{eq:gr br Hopf algebra}
\dim \NA(\Vdiag) \leq \dim \NA(\cV(\epsilon,2)).
\end{align}

\begin{paso} \label{prop:V-lambda-generic}
	If $\epsilon\notin \G_{\infty}$, then $\dim \toba(\cV(\epsilon,2)) = \infty$.
\end{paso}

\pf
Here $\dim  \toba(\Vdiag) = \infty$ by Example \ref{exa:dim1} and \eqref{eq:gr br Hopf algebra} applies.
\epf

\begin{paso} \label{prop:V-lambda-N>3}
	If  $\epsilon\in\G'_N$, $N\geq 4$, then $\dim  \toba(\cV(\epsilon,\ell)) = \infty$ for all $\ell\ge2$.
\end{paso}

\pf
Here $\Vdiag$ is of Cartan type with Cartan matrix
$\begin{pmatrix} 2 & 2-N \\ 2-N & 2\end{pmatrix}$.
Thus Theorem \ref{thm:nichols-diagonal-finite-gkd} and \eqref{eq:gr br Hopf algebra} apply.
\epf

\begin{paso} \label{le:infGK3}
	Let $\epsilon\in\G'_3$. Then $\dim  \toba(\cV(\epsilon,2)) = \infty$.
\end{paso}
\pf The proof of \cite[\S 3.5 -- Step 3]{aah-triang} holds verbatim.
\epf
The Proposition is proved. \epf

\section{One block and one point}\label{sec:yd-dim3}
\subsection{The setting and the statement}\label{subsection:YD3-setting}

Let $(q_{ij})_{1 \le i,j \le 2}$ be a matrix of invertible scalars and $a \in \ku$.
We assume that $\epsilon :=q_{11}$ satisfies $\epsilon^2 = 1$.
Let $V$ be a braided vector space  with a basis $(x_i)_{i\in\I_3}$ and a braiding given by
\begin{align}\label{eq:braiding-block-point}
(c(x_i \otimes x_j))_{i,j\in \I_3} &=
\begin{pmatrix}
\epsilon x_1 \otimes x_1&  (\epsilon x_2 + x_1) \otimes x_1& q_{12} x_3  \otimes x_1
\\
\epsilon x_1 \otimes x_2 & (\epsilon x_2 + x_1) \otimes x_2& q_{12} x_3  \otimes x_2
\\
q_{21} x_1 \otimes x_3 &  q_{21}(x_2 + a x_1) \otimes x_3& q_{22} x_3  \otimes x_3
\end{pmatrix}.
\end{align}
Let $V_1 = \langle x_1, x_2\rangle$ (the block) and $V_2 = \langle x_3 \rangle$ (the point). 
Let $\Gamma = \Z^2$ with canonical basis $g_1, g_2$.
We realize $(V, c)$ in $\ydG$ as $\cV_{g_1}(\chi_1, \eta) \oplus \ku_{g_2}^{\chi_2}$
with suitable  $\chi_1, \chi_2$ and $\eta$, where $V_1 = \cV_{g_1}(\chi_1, \eta)$, while $V_2 = \ku_{g_2}^{\chi_2}$.
Thus $V_1 \simeq \cV(\epsilon, 2)$;  thus we use the notations and results from \S \ref{subsection:super-jordanian}.

The \emph{interaction} between the block and the point is $q_{12}q_{21}$; it is
\begin{align*}
\text{weak if } q_{12}q_{21}&= 1, &\text{mild if } q_{12}q_{21}&= -1,&\text{strong if } q_{12}q_{21}&\notin \{\pm 1\}.
\end{align*}

In characteristic 0, we introduced a normalized version of $a$
called the ghost, which is discrete when it belongs to $\N$. 
In our context, $p>2$, we need a variant of this notion.
First we say that $V$ has \emph{discrete ghost} if $a \in \Fp^{\times}$.
When this is the case, we pick a representative $\sa \in \Z$ of $2a$ by imposing
\begin{align}\label{eq:discrete-ghost}
\sa &\in \begin{cases}
\{1- p, \dots, -1 \}, &  \epsilon = 1, \\
\{1, \dots, 2p-1 \} \cap 2\Z, &  \epsilon = -1;
\end{cases}
& \text{ set }
\ghost := \begin{cases} -\sa, &\epsilon = 1, \\
\sa, &\epsilon = -1.
\end{cases}
\end{align}
Then $\ghost$ is called the \emph{ghost}.
In this Section we consider the following braided vectors spaces with braiding \eqref{eq:braiding-block-point},
where the ghost is discrete and $q_{22}\in\G_{\infty}$:
\begin{align*}
&\lstr(q_{22}, \ghost):& &\text{weak interaction, }& &\epsilon = 1;
\\
&\lstr_{-}(q_{22}, \ghost):& &\text{weak interaction, } & &\epsilon = -1;
\\
&\cyc_1:& &\text{mild interaction,  }& &\epsilon = q_{22} = -1, \quad \ghost = 1. 
\end{align*}

In this Section, we shall prove part \ref{item:block-point} of Theorem \ref{thm:main-intro}.

\begin{theorem}\label{thm:point-block} Let $V$ be a braided vector space with braiding \eqref{eq:braiding-block-point}.
If $V$ is as in Table \ref{tab:toba-finitedim-block-point}, then $\dim \NA (V) < \infty$.
\end{theorem}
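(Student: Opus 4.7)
The plan is to treat each of the six families in Table~\ref{tab:toba-finitedim-block-point} separately, adapting the characteristic-zero computations of \cite{aah-triang} and supplementing them with the $p$-power truncation relations that become available once $p>2$. For a given family I would first list a set of candidate defining relations for $\NA(V)$ of three types: (a)~the block relations coming from Lemma~\ref{prop:1block} (for $\epsilon=1$) or Proposition~\ref{prop:-1block} (for $\epsilon=-1$); (b)~the relation $x_3^N$ where $N=\ord q_{22}$, cf.~Example~\ref{exa:dim1}; and (c)~mixed relations, namely a ``ghost relation'' of the form $(\ad_c x_2)^{k+1}x_3$ or $(\ad_c x_3)^{\ell+1}x_2$ whose order is dictated by $\ghost$, together with truncation relations $z_j^{N_j}$ for the iterated braided commutators $z_j:=(\ad_c x_2)^j x_3$, whose exponents $N_j$ are to be determined.

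To show that each candidate vanishes in $\NA(V)$, I would apply the skew-derivations $\partial_1,\partial_2,\partial_3$ from~\eqref{eq:skewderivations}: a homogeneous element of positive degree is zero in $\NA(V)$ iff it is annihilated by all $\partial_i$. The key computational input is the commutation among $x_1,x_2,x_{21}$ exactly as in~\eqref{eq:rels-B(V(-1,2))-5}--\eqref{eq:a-b-formulae}, extended by a short induction on $j$ to control $\partial_i(z_j)$ and $g\cdot z_j$. Each verification then reduces to a polynomial identity in a few auxiliary variables, in direct analogy with~\eqref{eq:rels-B(V(-1,2))-evenpower}.

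Once the relations are in hand, let $\cBt$ be the pre-Nichols algebra they cut out. I would produce the PBW-type spanning set displayed in the forthcoming subsections of \S\ref{subsec:realizations-block-pt} and check that it is a left ideal of $\cBt$ containing~$1$, following verbatim the ``reverse the order to get a left ideal'' trick used at the end of the proof of Proposition~\ref{prop:-1block}; hence $\cBt$ is spanned by this set. Linear independence of the corresponding set in the quotient $\NA(V)$ follows by iterated application of the $\partial_i$, in the spirit of~\eqref{eq:-1del1}. This yields $\dim\cBt=\dim K\cdot\dim\NA(V_2)$, matching the last column of Table~\ref{tab:toba-finitedim-block-point}, and the comparison forces $\cBt\simeq\NA(V)$ and hence the claimed finiteness.

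The main obstacle I foresee is pinning down the correct nilpotency exponent~$N_j$ for the iterated generators $z_j$, because it is not uniformly equal to $p$: it depends on $\epsilon$, on $\ord q_{22}$, and on the parity and size of $\sa$ through~\eqref{eq:discrete-ghost}. The mild case $\cyc_1$ adds the further wrinkle that $q_{12}q_{21}=-1$ introduces alternating signs in the braided commutators, so the algebra generated by the $z_j$ is no longer commutative up to scalars and requires finer bookkeeping. Once these exponents are correctly identified, the remainder of the argument is the same skew-derivation routine that already worked for Proposition~\ref{prop:-1block}, and bosonization by the groups singled out in \S\ref{subsec:realizations-block-pt} then delivers the advertised finite-dimensional pointed Hopf algebras.
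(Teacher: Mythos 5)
Your overall strategy --- guess a presentation, kill the candidate relations with the skew derivations $\partial_i$, then match a PBW spanning set (via the left-ideal trick) against a linearly independent set --- is sound, but it is not how the paper proves Theorem \ref{thm:point-block}, and it is substantially heavier than necessary. The paper's proof never produces a presentation: it invokes $\NA(V)\simeq K\#\NA(V_1)$ with $K\simeq\NA(K^1)$, $K^1=\ad_c\NA(V_1)(V_2)$, shows that $(z_n)_{0\le n\le|\sa|}$ is a basis of $K^1$ (Lemma \ref{lemma:K-basis}), computes the coaction (Lemma \ref{le:zcoact}), and concludes that $K^1$ is of \emph{diagonal type} with braiding matrix $\bp_{ij}=\epsilon^{ij}q_{12}^iq_{21}^jq_{22}$. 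Finiteness is then read off the Dynkin diagram of $K^1$: totally disconnected with labels $\epsilon^iq_{22}=\pm1$ (each vertex contributing $2$ or $p$ to $\dim K$), or Cartan type $A_2$ for $\lstr(\omega,1)$. Since $\dim\NA(V_1)\in\{p^2,4p^2\}$, the theorem follows. This is precisely what dissolves the ``main obstacle'' you flag: the nilpotency exponent of $z_j$ is just the order dictated by the diagonal label $\epsilon^jq_{22}$ as in Example \ref{exa:dim1}, and the extra bookkeeping you anticipate for $\cyc_1$ is handled in \S\ref{subsection:mild} by enlarging the generating set of $K^1$ with $f_n=\ad_c x_1(z_n)$.

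If you insist on the presentation route (which amounts to reproving Propositions \ref{pr:lstr-11disc}--\ref{prop:pm1bp-mild}), your candidate relation list as stated is incomplete: besides the block relations, $x_3^N$, the ghost relation $z_{|\sa|+1}=0$ and the truncations $z_j^{N_j}$, you must include the $q$-commutation relations $x_1x_3=q_{12}x_3x_1$, $x_{21}z_0=q_{12}^2z_0x_{21}$ when $\epsilon=-1$, and above all the relations $z_tz_{t+1}=q_{21}q_{22}\,z_{t+1}z_t$ among consecutive $z$'s, cf.\ \eqref{eq:lstr-rels&11disc-1}--\eqref{eq:lstr-rels&11disc-2} and \eqref{eq:lstr-rels&-11disc-2}--\eqref{eq:lstr-rels&-11disc-4}. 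Without these the quotient $\cBt$ is too large and the left-ideal spanning argument cannot reorder words into PBW form, so the dimension comparison collapses. Finally, the identity at the end of your third paragraph should read $\dim\cBt=\dim K\cdot\dim\NA(V_1)$ (the block, not the point); with that and the missing relations repaired, your route does go through.
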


To prove the Theorem, we consider
$K=\NA (V)^{\mathrm{co}\,\NA (V_1)}$.
By \cite[Proposition 8.6]{HS}, 
$\NA(V) \simeq K \# \NA (V_1)$ and $K$ is the Nichols algebra of
\begin{align} \label{eq:1bpK^1}
K^1= \ad_c\NA (V_1) (V_2).
\end{align}
Now $K^1\in {}^{\NA (V_1)\# \ku \Gamma}_{\NA (V_1)\# \ku \Gamma}\mathcal{YD}$ with the adjoint action
and the coaction given by
\begin{align} \label{eq:coaction-K^1}
\delta &=(\pi _{\NA (V_1)\#  \ku \Gamma}\otimes \id)\Delta _{\NA (V)\#  \ku \Gamma}.
\end{align}

In order to describe $K^1$, we set
\begin{align}\label{eq:zn}
z_n &:= (\ad_c x_2)^n x_3,& n&\in\N_0.
\end{align}

\subsection{Weak interaction}\label{subsection:weak}
\emph{Here $q_{12}q_{21} = 1$}. In general, 
\begin{align}
c^2_{\vert V_1 \otimes V_2} = \id \iff q_{12}q_{21} = 1 \text{ and } a=0.
\end{align}
If $a = 0$, then
\begin{align}
\label{eq:block-point-disconnected}
\NA (V) \simeq \NA(\cV(\epsilon, 2)) \underline{\otimes} \NA(\ku x_3).
\end{align}
Here $\underline{\otimes}$ denotes the \emph{braided} tensor product of Hopf algebras (the structure of Hopf algebra in $\ydg$)

\emph{From now on we assume that the ghost is discrete,} in particular $\neq 0$.
We follow the exposition in \cite[\S 4.2]{aah-triang}.

\begin{lemma} \label{le:-1bpz} The following formulae hold in $\NA(V)$ for all $n\in\N_0$:
\begin{align}
\label{eq:-1block+point}
\begin{aligned}
g_1\cdot z_n &= \epsilon^nq_{12}z_n,& x_1z_n &= \epsilon^n q_{12}z_nx_1,& x_{21}^n x_2 &= (n\epsilon x_1 + x_2) x^n_{21},
\\
g_2\cdot z_n &= q_{21}^nq_{22}z_n,& x_{21}z_n &= q_{12}^2z_nx_{21},& 
x_2z_n &= \epsilon^nq_{12}z_nx_2 + z_{n+1}. 
\end{aligned}
\end{align}
\end{lemma}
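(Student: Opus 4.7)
The plan is to prove the six identities by induction on $n$, noting that the last one, $x_{21}^n x_2 = (n\epsilon x_1 + x_2) x_{21}^n$, is separable from the others since it involves only elements of the block $V_1 = \cV(\epsilon, 2)$. For that identity, the case $n = 1$ is a rewriting of \eqref{eq:rels-B(V(-1,2))-2} when $\epsilon = -1$, while when $\epsilon = 1$ one has $x_{21} = -\tfrac{1}{2} x_1^2$ as a consequence of \eqref{eq:rels B(V(-1,2))} and the identity follows by a short manipulation. The induction step uses $x_1 x_{21} = x_{21} x_1$, which is \eqref{eq:rels-B(V(-1,2))-dos} in the $\epsilon = -1$ case and trivial in the $\epsilon = 1$ case.

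The remaining four identities I would prove jointly by induction on $n$, because they are tightly interlocked. At $n = 0$, the first two are immediate from the braiding, the sixth is the very definition of $z_1 = (\ad_c x_2)(x_3)$, and the third, $x_1 x_3 = q_{12} x_3 x_1$, is a quadratic relation in $\NA(V)$ verified by checking $\partial_i(x_1 x_3 - q_{12} x_3 x_1) = 0$ for $i \in \I_3$; only $\partial_3$ contributes and yields $(1 - q_{12} q_{21})x_1 = 0$ by the weak interaction hypothesis. The fourth, $x_{21} x_3 = q_{12}^2 x_3 x_{21}$, is likewise verified via the skew-derivation criterion in degree $3$, using $\partial_1(x_{21}) = -\epsilon x_1$, $\partial_2(x_{21}) = 0$, and $g_2 \cdot x_{21} = q_{21}^2 x_{21}$; the last equality holds because the a priori correction term $q_{21}^2 a(1-\epsilon)x_1^2$ vanishes (by $x_1^2 = 0$ in the $\epsilon = -1$ case, and by $1 - \epsilon = 0$ in the $\epsilon = 1$ case).

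For the inductive step $n \rightsquigarrow n+1$, the recursion $z_{n+1} = x_2 z_n - \epsilon^n q_{12} z_n x_2$ combined with the algebra-homomorphism property of the $\Gamma$-action propagates each identity. In $g_1 \cdot z_{n+1}$, the $x_1$-summand of $g_1 \cdot x_2 = \epsilon x_2 + x_1$ contributes a residue $x_1 z_n$ which is absorbed via the $n$-th instance of $x_1 z_n = \epsilon^n q_{12} z_n x_1$ and the parity identity $\epsilon^{2n} = 1$; the same mechanism governs $g_2 \cdot z_{n+1}$, where the shift $q_{21}ax_1$ in $g_2 \cdot x_2$ is absorbed. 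To propagate the commutation with $x_1$, rewriting $x_1 x_2 = \epsilon(x_2 x_1 - x_{21})$ from the definition of $x_{21}$ produces a residue proportional to $x_{21} z_n$, which is absorbed by the $n$-th instance of $x_{21} z_n = q_{12}^2 z_n x_{21}$. To propagate the commutation with $x_{21}$, the $n = 1$ case of the separable identity lets one rewrite $x_{21} x_2 = (\epsilon x_1 + x_2) x_{21}$, reducing the step to the level-$n$ commutations of $x_1$ and $x_{21}$ with $z_n$.

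The main obstacle is orchestrating this interlocked induction cleanly: the four non-separable identities feed into each other at every step, so at each inductive call one must apply several hypotheses simultaneously, track the parity signs $\epsilon^n$ carefully using $\epsilon^2 = 1$, and invoke the weak interaction hypothesis $q_{12}q_{21} = 1$ at the right moment for the would-be-obstructing terms to vanish.
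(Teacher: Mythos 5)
Your argument is correct and is, in substance, exactly the interlocked induction on $n$ (driven by the recursion $z_{n+1}=x_2z_n-(g_1\cdot z_n)x_2$, with the $x_1$- and $x_{21}$-commutations absorbing the residues from $g_1\cdot x_2=\epsilon x_2+x_1$ and $g_2\cdot x_2 = q_{21}(x_2+ax_1)$) that the paper invokes by simply citing \cite[Lemma 4.2.1]{aah-triang} as valid in any characteristic; your base cases via the skew-derivation criterion, including the vanishing of the correction term in $g_2\cdot x_{21}$, check out. So you have written out the details the paper delegates to the reference, rather than taken a different route.
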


\pf The proof of \cite[Lemma 4.2.1]{aah-triang} is valid in any characteristic.
\epf

Let $(\mu_n)_{n \in\N_0}$ be the family of elements of $\ku$ defined recursively by
\begin{align*}
\mu_0 &= 1, & \mu_{2k + 1} &=- (a + k\epsilon) \mu_{2k},& \mu_{2k} &= (a + k +  \epsilon (a + k - 1))\mu_{2k - 1}.
\end{align*}
This can be reformulated as
\begin{align}\label{eq:def-mu-n}
\mu_{n + 1} &= \begin{cases} (2a + n)\mu_n &\text{if $n$ is odd,} \\
-\frac{2a + n}{2}  \mu_n &\text{if $n$ is even,}
\end{cases}   &\text{when } \epsilon &= 1;
\\
\mu_{n + 1} &= \begin{cases} \mu_n &\text{if $n$ is odd,} \\
-(a - \frac n{2}) \mu_n &\text{if $n$ is even,}  
\end{cases}  &\text{when } \epsilon &= -1.
\end{align}
Thus  $\mu_n =0 \iff n > \vert \sa\vert$.

\begin{lemma}\label{lemma:derivations-zn}
For all $k\in \{1, \dots, p-1 \}$, $\partial_1(z_k) =\partial_2(z_k) =  0$,
\begin{align}\label{eq:derivations-zn}
\partial_3(z_{2k}) &= \mu_{2k} x_{21}^k,&  \partial_3(z_{2k+1}) &= \mu_{2k + 1} x_1x_{21}^k.
\end{align}
Therefore, $z_n =0 \iff n > \vert \sa\vert$.
\end{lemma}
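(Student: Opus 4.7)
The plan is to proceed by induction on $n$, using the recursion
$z_{n+1} = x_2 z_n - \epsilon^n q_{12}\, z_n x_2$
(rearranged from the last identity in Lemma \ref{le:-1bpz}) combined with the Leibniz rule \eqref{eq:skewderivations}. The base case $z_0 = x_3$ is immediate: $\partial_1(x_3) = \partial_2(x_3) = 0$ and $\partial_3(x_3) = 1 = \mu_0$. Throughout, I will exploit that both $x_1$ and $x_2$ carry coaction-degree $g_1$, while $x_3$ carries degree $g_2$, so the appropriate group element in the Leibniz rule depends on the index $i$.

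For $\partial_1$ and $\partial_2$ the vanishing is formal. Applying \eqref{eq:skewderivations} to the recursion gives
\[
\partial_i(z_{n+1}) = \partial_i(x_2)(g_1\cdot z_n) + x_2\,\partial_i(z_n) - \epsilon^n q_{12}\bigl[\partial_i(z_n)(g_1\cdot x_2) + z_n\,\partial_i(x_2)\bigr].
\]
For $i=1$, the vanishing of $\partial_1(x_2)$ makes the first and last summands disappear, so the induction proceeds. For $i=2$, one has $\partial_2(x_2)=1$, and the remaining terms collapse to $g_1\cdot z_n - \epsilon^n q_{12} z_n$, which is zero by the formula $g_1\cdot z_n = \epsilon^n q_{12} z_n$ of Lemma \ref{le:-1bpz}.

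The substantive part concerns $\partial_3$. Since $\partial_3(x_2)=0$ and $g_2\cdot x_2 = q_{21}(x_2 + a x_1)$ is read off from \eqref{eq:braiding-block-point}, combined with the weak-interaction hypothesis $q_{12}q_{21}=1$, the Leibniz rule yields the clean recursion
\[
\partial_3(z_{n+1}) = x_2\,\partial_3(z_n) - \epsilon^n\, \partial_3(z_n)\,(x_2 + a x_1).
\]
Injecting the ansatz $\partial_3(z_{2k}) = \mu_{2k} x_{21}^k$ and $\partial_3(z_{2k+1}) = \mu_{2k+1} x_1 x_{21}^k$, one must commute $x_2$ past $x_{21}^k$ (respectively past $x_1 x_{21}^k$) and move $x_2 + a x_1$ across. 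The tools are the identities $x_1 x_{21} = x_{21} x_1$ and $x_2 x_{21}^k = x_{21}^k x_2 + k\, x_1 x_{21}^k$ obtained by iterating \eqref{eq:rels-B(V(-1,2))-2} together with $x_1^2 = 0$ in the super-Jordan case ($\epsilon = -1$), and the analogous relation $x_2 x_1 = x_1 x_2 - \tfrac12 x_1^2$ from the Jordan plane in the case $\epsilon = 1$. A direct coefficient comparison against the target formulas \eqref{eq:derivations-zn} reproduces exactly the defining recursion \eqref{eq:def-mu-n} for $\mu_{n+1}$, after splitting according to the parity of $n$ and the sign $\epsilon$.

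Finally, the characterization $z_n = 0 \iff n > |\sa|$ follows because in $\NA(V)$ an element is zero iff annihilated by every $\partial_i$. Since $\partial_1$ and $\partial_2$ kill $z_n$ unconditionally, $z_n = 0$ is equivalent to $\partial_3(z_n) = 0$; but $x_{21}^k$ and $x_1 x_{21}^k$ are nonzero basis vectors for $k \le p-1$ by Proposition \ref{prop:-1block} (and its Jordan-plane analog via Lemma \ref{prop:1block}), so the vanishing is equivalent to $\mu_n = 0$, which by inspection of \eqref{eq:def-mu-n} occurs precisely when $n > |\sa|$. The main obstacle is the bookkeeping of the $\partial_3$-induction: tracking parities of $n$, switching between the $\epsilon = \pm 1$ commutation rules, and keeping all expressions in the normal form $x_1^a x_{21}^b x_2^c$ so that matching to $\mu_n$ is unambiguous.
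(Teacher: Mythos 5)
Your argument is correct and is essentially the proof that the paper simply imports by citation from \cite[Lemma 4.2.2]{aah-triang}: induction on $n$ via $z_{n+1}=x_2z_n-\epsilon^nq_{12}z_nx_2$, the Leibniz rule \eqref{eq:skewderivations} (with $g_1$ for $\partial_1,\partial_2$ and $g_2$ for $\partial_3$, using $q_{12}q_{21}=1$), and the commutation rules in $\toba(V_1)$, which reproduces exactly the recursion \eqref{eq:def-mu-n}. One small imprecision in your last paragraph: when $\epsilon=1$ one has $x_{21}=-\tfrac12 x_1^2$, so $x_{21}^k=(-\tfrac12)^k x_1^{2k}$ vanishes once $2k\ge p$ and is \emph{not} a nonzero basis vector for all $k\le p-1$; this is harmless for the conclusion because the convention \eqref{eq:discrete-ghost} forces $\vert\sa\vert\le p-1$ when $\epsilon=1$, so the nonvanishing of $\partial_3(z_n)=\pm 2^{-k}\mu_n x_1^{n}$ is only needed for $n\le p-1$, where $x_1^n\neq 0$ by Lemma \ref{prop:1block}.
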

\pf
The proof of \cite[Lemma 4.2.2]{aah-triang} is valid in any characteristic, 
taking into the account the conventions on $\sa$.
\epf

If $\epsilon = 1$, then \eqref{eq:derivations-zn} says that
\begin{align}\label{eq:derivations-zn-eps1}
\partial_3(z_{2k}) &= \frac{(-1)^k}{2^k} \mu_{2k} x_{1}^{2k}, &
\partial_3(z_{2k+1}) &= \frac{(-1)^k}{2^k}\mu_{2k + 1} x_{1}^{2k + 1}.
\end{align}

\bigbreak
Recall that
$K=\NA (V)^{\mathrm{co}\,\NA (V_1)} \simeq \toba(K^1)$, $K^1= \ad\NA (V_1)(V_2)$.

\begin{lemma} \label{lemma:K-basis} The family $(z_n)_{0\le n\le \vert \sa\vert}$ is a basis of $K^1$.
\end{lemma}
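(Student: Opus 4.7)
The plan is to establish that the family spans $K^1$ and that it is linearly independent, following the pattern of the characteristic-$0$ argument in \cite[Lemma 4.2.3]{aah-triang}.

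For spanning, recall $K^1 = \ad_c\NA(V_1)(x_3)$. Since $\NA(V_1)$ is generated as an algebra by $x_1$ and $x_2$, the braided adjoint module $K^1$ is the $\ku$-span of iterated one-generator adjoint actions $\ad_c x_{i_k}\cdots \ad_c x_{i_1}(x_3)$ for words in $\{1,2\}^{\ast}$. By definition, pure iterates yield $(\ad_c x_2)^n(x_3)=z_n$. The commutation relations $g_1\cdot z_n=\epsilon^n q_{12}z_n$ and $x_1z_n=\epsilon^n q_{12}z_nx_1$ from Lemma \ref{le:-1bpz} imply
\begin{align*}
(\ad_c x_1)(z_n) = x_1z_n - (g_1\cdot z_n)\,x_1 = 0
\end{align*}
for every $n\ge 0$. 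Consequently, any word involving at least one instance of $\ad_c x_1$ produces zero when applied to $x_3$, and $K^1=\operatorname{span}\{z_n:n\ge 0\}$. Combined with the vanishing $z_n=0$ for $n>\vert\sa\vert$ from Lemma \ref{lemma:derivations-zn}, this yields the spanning assertion.

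For linear independence, I would apply the skew-derivation $\partial_3$. By Lemma \ref{lemma:derivations-zn} (and \eqref{eq:derivations-zn-eps1} when $\epsilon=1$), $\partial_3(z_n)$ is a nonzero scalar multiple of $\mu_n\, b_n$, where $b_n$ is an explicit element of $\NA(V_1)$: namely $b_n=x_{21}^{k}$ or $x_1 x_{21}^{k}$ (when $\epsilon=-1$, with $n=2k$ or $2k+1$), and $b_n=x_1^n$ (when $\epsilon=1$). By the normalization \eqref{eq:discrete-ghost}, $\mu_n\neq 0$ precisely for $n\le \vert\sa\vert$, and in that range the $b_n$ are distinct elements of the PBW bases of $\NA(V_1)$ furnished by Lemma \ref{prop:1block} and Proposition \ref{prop:-1block} (the bounds $\vert\sa\vert\le p-1$ if $\epsilon=1$ and $\vert\sa\vert\le 2p-2$ if $\epsilon=-1$ keep the $b_n$ within those bases). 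Hence the $b_n$, $0\le n\le\vert\sa\vert$, are linearly independent in $\NA(V_1)$, so any nontrivial linear combination of the corresponding $z_n$ would be mapped by $\partial_3$ to a nontrivial combination of basis elements, a contradiction.

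The main obstacle, in my view, is the spanning step: one must use the braided module-algebra structure to reduce the action of $\NA(V_1)$ on $x_3$ to iterated single-generator adjoint actions, and then exploit the vanishing $(\ad_c x_1)(z_n)=0$ to collapse all such words to pure powers of $\ad_c x_2$. Both ingredients hinge directly on the commutation formulae of Lemma \ref{le:-1bpz}. The linear independence is then essentially a bookkeeping check with the $\mu_n$ and the known PBW bases of $\NA(V_1)$, once one verifies that none of the $\mu_n$ accidentally vanish in characteristic $p$ within the admissible range — something that is arranged by the very definition \eqref{eq:discrete-ghost} of $\sa$.
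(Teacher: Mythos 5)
Your proposal is correct and follows essentially the same route as the paper: spanning comes from the vanishing $\ad_c x_1(z_n)=0$ (a consequence of Lemma \ref{le:-1bpz}), which collapses all iterated adjoint actions to the pure powers $z_n=(\ad_c x_2)^n x_3$, and linear independence rests on the nonvanishing of $\partial_3(z_n)$ for $n\le\vert\sa\vert$ from Lemma \ref{lemma:derivations-zn}. The paper phrases the independence step slightly more economically (the $z_n$ are homogeneous of distinct degrees and nonzero), but this is the same underlying argument as your $\partial_3$ computation.
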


\pf
The family is linearly independent,
because the $z_n$'s are homogeneous of distinct degrees, and  are $\neq 0$ by \eqref{eq:derivations-zn}.
We have for all $n\in\N_0$
\begin{align}\label{eq:adx1-zn}
\ad_c x_1 (z_n) =  x_1z_n - g_1\cdot z_n x_1 \overset{\eqref{eq:-1block+point}}= \epsilon^n q_{12}z_nx_1 -  \epsilon^nq_{12}z_nx_1 = 0,
\\
\label{eq:adx12-zn}
\ad_c x_{21} (z_n) = \ad_c  x_2 \ad_c x_1 (z_n) - \epsilon \ad_c x_1 \ad_c x_2 (z_n) = 0.
\end{align}
The Lemma follows.
\epf

\smallbreak
If $\epsilon = 1$, then we define recursively
$\nu_{k,n}$  as follows:  $\nu_{n,n}=1$,
\begin{align*}
\nu_{0, n+1} &= -  \big(\frac{n}{2}+ a \big)\nu_{0,n}, &
\nu_{k,n+1}&=\nu_{k-1,n}-\left(\frac{n+k}{2}+ a\right)\nu_{k,n}, & &  1\le k\le n.
\end{align*}

\begin{lemma} \label{le:zcoact}
The coaction \eqref{eq:coaction-K^1} on $z_n$, $0\le n\le \vert \sa\vert$, 
is given by \eqref{eq:coact-zn}, when $\epsilon = 1$,
and by \eqref{eq:coact-zn-even}, \eqref{eq:coact-zn-odd}, when $\epsilon = -1$:

\begin{align} \label{eq:coact-zn}
\delta (z_{n}) &= \sum _{k=0}^n \nu_{k,n}\, x_1^{n-k}g_1^{k} g_2 \otimes z_k.
\end{align} \begin{align}\label{eq:coact-zn-even}
\delta (z_{2n}) &=
\sum _{k=1}^n k\binom n k \mu_{k,n} \,
x_1x_{21}^{n-k}g_1^{2k-1}g_2\otimes z_{2k-1}
\\ \notag&\qquad  + \sum _{k=0}^n \binom n k\mu_{k, n} x_{21}^{n-k}g_1^{2k}g_2\otimes z_{2k},
\\ \label{eq:coact-zn-odd}
\delta (z_{2n+1}) &=
\sum _{k=0}^n \binom nk \mu_{k, n+1} \,   x_1x_{21}^{n-k}g_1^{2k}g_2\otimes z_{2k}
\\ \notag& \qquad + \sum _{k=0}^n \binom nk \mu_{k+1, n+1} x_{21}^{n-k}g_1^{2k+1}g_2\otimes z_{2k+1}.
\end{align}
\end{lemma}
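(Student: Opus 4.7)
The plan is to induct on $n$, using the identity
\[
z_{n+1} = x_2 z_n - \epsilon^n q_{12}\, z_n x_2,
\]
which is the last formula of \eqref{eq:-1block+point} rearranged. The base case $n=0$ is direct: $\Delta(x_3)=x_3\otimes 1+g_2\otimes x_3$ and $\pi(x_3)=0$ give $\delta(z_0)=g_2\otimes z_0$, matching \eqref{eq:coact-zn} with $\nu_{0,0}=1$ and \eqref{eq:coact-zn-even} for $n=0$.

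The crucial observation for the inductive step is that $\delta=(\pi\otimes\id)\Delta$ is multiplicative: if we equip $\NA(V_1)\#\ku\Gamma\otimes \NA(V)\#\ku\Gamma$ with the componentwise product, then $\pi\otimes\id$ is an algebra map, and hence so is $\delta=(\pi\otimes\id)\Delta$. Since $\Delta(x_2)=x_2\otimes 1+g_1\otimes x_2$ and $\pi(x_2)=x_2$, $\pi(g_1)=g_1$, we have $\delta(x_2)=x_2\otimes 1+g_1\otimes x_2$, so
\[
\delta(z_{n+1})=\delta(x_2)\,\delta(z_n)-\epsilon^n q_{12}\,\delta(z_n)\,\delta(x_2).
\]
Plugging in the inductive form of $\delta(z_n)$ reduces the step to a purely algebraic manipulation.

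Bringing the result to the predicted form uses three ingredients. First, the defining relations of $\NA(\cV(\epsilon,2))$ from Lemma \ref{prop:1block} and Proposition \ref{prop:-1block}: for $\epsilon=1$ this includes $x_{21}=-\tfrac12 x_1^2$, which collapses the general expression to powers of $x_1$; for $\epsilon=-1$ one uses the commutation rules \eqref{eq:a-b-conditions} together with $x_{21}^n x_2=(n\epsilon x_1+x_2)x_{21}^n$. Second, the relation $x_2 z_k=\epsilon^k q_{12} z_k x_2+z_{k+1}$ of \eqref{eq:-1block+point}, which is exactly what produces the index shift $z_k\mapsto z_{k+1}$ in the right tensor factor. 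Third, the commutation of $g_1,g_2$ past $x_1,x_2$ prescribed by the $\Gamma$-action in the realization $\cV_{g_1}(\chi_1,\eta)\oplus\ku_{g_2}^{\chi_2}$. After simplification, the coefficients organize themselves according to the recursions defining $\nu_{k,n+1}$ and $\mu_{k,n+1}$; linear independence of the monomials $x_1^{n+1-k}g_1^k g_2\otimes z_k$ (for $\epsilon=1$), respectively $x_1^{\varepsilon}x_{21}^{n-k}g_1^{\bullet}g_2\otimes z_{\bullet}$ with $\varepsilon\in\{0,1\}$ (for $\epsilon=-1$)—which follows from the PBW basis of Proposition \ref{prop:-1block} combined with Lemma \ref{lemma:K-basis}—lets us read off the coefficients and close the induction.

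The main obstacle is the case $\epsilon=-1$: the two formulas \eqref{eq:coact-zn-even} and \eqref{eq:coact-zn-odd} are interlocked by the induction, so parity must be carried through jointly, and the interaction of the $x_{21}^{n-k}$ factors with $x_2$ via \eqref{eq:a-b-conditions} produces several cross-terms that need to be collected. The bookkeeping is delicate but finite; once set up, the verification amounts to matching coefficients against the defining recursions of the $\mu_{k,n}$.
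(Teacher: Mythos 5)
Your proposal is correct and follows essentially the same route as the paper: the paper's proof is a one-line citation of \cite[Lemma 4.2.1]{aah-triang} (noting only that the binomial coefficients are nonzero in characteristic $p$), and the argument being cited is precisely the induction you describe, based on the multiplicativity of $\delta=(\pi\otimes\id)\Delta$ applied to $z_{n+1}=x_2z_n-\epsilon^nq_{12}z_nx_2$, the relations of $\NA(V_1)$, and the recursions defining $\nu_{k,n}$ and $\mu_{k,n}$. Your reconstruction is in fact more explicit than what the paper records.
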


\begin{proof} The proof of \cite[Lemma 4.2.1]{aah-triang} can be adapted since $\binom nk \neq 0$  by assumption.
\end{proof}

\smallbreak
We are ready to prove the finite-dimensionality in the case of weak interaction.
We claim that the braided vector space  $K^1$ is of diagonal type
with  braiding matrix
\begin{align*}
(\bp_{ij})_{0\le i,j\le \vert \sa \vert} &= (\epsilon^{ij}q_{12}^iq_{21}^jq_{22})_{0\le i,j\le \vert \sa \vert}.
\end{align*}
Hence, the corresponding generalized  Dynkin diagram  has labels
\begin{align*}
\bp_{ii}&= \epsilon^iq_{22},& \bp_{ij}\bp_{ji}&= q_{22}^2, & &i\neq j\in \I_{0, \vert \sa \vert}.
\end{align*}

Indeed, by Lemma \ref{lemma:K-basis} it is enough to compute
\begin{align*}
c(z_i \otimes z_j) &= g_1^ig_2 \cdot z_j \otimes z_i = \epsilon^{ij}q_{12}^iq_{21}^jq_{22} z_j \otimes z_i,
\end{align*}
by Lemmas \ref{le:zcoact} and \ref{le:-1bpz}, \eqref{eq:adx1-zn} and \eqref{eq:adx12-zn}. We proceed then case by case.

\begin{case} \label{case:1}
$q_{22}^2=1$.
\end{case}
Here the Dynkin diagram of $K^1$ is totally disconnected with
vertices $i\in \I_{0, \vert \sa \vert}$ labelled with $\epsilon^iq_{22}$.
The vertices with label $1$, respectively $-1$, contribute with $p$, respectively $2$, to $\dim \NA (K^1)$.

\begin{case} \label{case:2}
$\epsilon = 1$, $q_{22}\in\G_3'$, $\vert \sa \vert = 1$ (provided that $p >3$).
\end{case}
The Dynkin diagram is of Cartan type $A_2$, so $\dim \NA (K^1) < \infty$.

\subsection{The presentation by generators and relations}\label{subsection:point-block-presentation}
We still assume that the interaction is weak. We start by some general Remarks that are proved exactly as in \cite[\S 4.3]{aah-triang}.

\begin{remark}\label{rem:xk qcommutes with z_k}
 Let
\begin{align*}
y_{2k} &=x_{21}^k, &  y_{2k+1} &= x_1x_{21}^k, & k & \in \N_0
\end{align*}
By Lemma \ref{le:-1bpz}
\begin{align}\label{eq:yn zt commute}
\partial_3(z_t)&=\mu_t y_t, & z_t y_n &= \epsilon^{nt}q_{21}^n y_n z_t, & &t, n \in\N_0.
\end{align}
\end{remark}

\begin{lemma}\label{lemma:relations L(pm1,pm1,G)}
Assume that $\epsilon^2=q_{22}^2=1$. In $\toba(\lstr(q_{22}, \ghost))$, or correspondingly
$\toba_{-}(\lstr(q_{22}, \ghost))$,
\begin{align}
\label{eq:q-serre}
z_{|\sa|+1}&=0, \\
z_t z_{t+1} &= q_{21}q_{22} z_{t+1} z_t & t\in\N_0,& \, t<|\sa|, \label{eq:zt zt+1 qcommute}\\
z_t^2&=0 & t\in\N_0&, \,  \epsilon^tq_{22}=-1. \label{eq:zt square is 0} \\
\partial_3(z_t^{n+1})&= \mu_{t}q_{21}^{nt}q_{22}^n n \, y_t z_t^n, & n,t\in\N_0,& \,  \epsilon^tq_{22}=1. \label{eq:zt n partial 3}
\end{align}
\end{lemma}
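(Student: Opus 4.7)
The plan is to verify each of the four relations by combining the derivation formulas of Lemma \ref{lemma:derivations-zn} with the q-commutation rules already established in Lemma \ref{le:-1bpz} and Remark \ref{rem:xk qcommutes with z_k}, together with the fact that $K=\NA(K^1)$ is a Nichols algebra of diagonal type with braiding matrix $(\bp_{ij})=(\epsilon^{ij}q_{12}^iq_{21}^jq_{22})$. The identity \eqref{eq:q-serre} is immediate: by Lemma \ref{lemma:derivations-zn} one has $z_n=0$ whenever $n>|\sa|$, since $\mu_n=0$ in that range and $z_n$ is annihilated by every $\partial_i$. Likewise \eqref{eq:zt square is 0} is free once we notice that the diagonal entry of the braiding of $K^1$ at the vertex $t$ is $\bp_{tt}=\epsilon^t q_{22}$ (recall $q_{12}q_{21}=1$ and $\epsilon^{t^2}=\epsilon^t$ since $\epsilon^2=1$), so if $\epsilon^tq_{22}=-1$ then $z_t^2=0$ inside the Nichols subalgebra $K\subset \NA(V)$.

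For the q-commutation \eqref{eq:zt zt+1 qcommute}, I would test the element $r:=z_tz_{t+1}-q_{21}q_{22}z_{t+1}z_t$ against every skew derivation. Since $\partial_1(z_k)=\partial_2(z_k)=0$ for all admissible $k$, the Leibniz rule (together with the actions $g_1\cdot z_k=\epsilon^kq_{12}z_k$, $g_2\cdot z_k=q_{21}^kq_{22}z_k$ from Lemma \ref{le:-1bpz}) immediately gives $\partial_1(r)=\partial_2(r)=0$. For $\partial_3$, plug in \eqref{eq:derivations-zn} and use $y_n$--$z_t$ commutation from \eqref{eq:yn zt commute} to push $z_t$ past $\partial_3(z_{t+1})=\mu_{t+1}y_{t+1}$ and $z_{t+1}$ past $\partial_3(z_t)=\mu_ty_t$; the scalar that appears equals $q_{21}q_{22}$ times the one on the right-hand side exactly when $\bp_{t,t+1}\bp_{t+1,t}=q_{22}^2=1$, which is our hypothesis. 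Thus $\partial_3(r)=0$, and since $r$ is of positive degree in $\NA(V)$, it vanishes.

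The relation \eqref{eq:zt n partial 3} is established by induction on $n\ge 0$. The base $n=0$ is Lemma \ref{lemma:derivations-zn} rewritten via $y_t$. For the step, apply the Leibniz rule
\begin{align*}
\partial_3(z_t^{n+1})=\partial_3(z_t)(g_2\cdot z_t^n)+z_t\,\partial_3(z_t^n),
\end{align*}
insert $\partial_3(z_t)=\mu_ty_t$, $g_2\cdot z_t^n=(q_{21}^tq_{22})^n z_t^n$ and the inductive formula, and finally move the leading $z_t$ to the right using $z_ty_t=\epsilon^{t^2}q_{21}^ty_tz_t=\epsilon^tq_{21}^ty_tz_t$. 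Collecting scalars yields the coefficient $\mu_tq_{21}^{nt}q_{22}^{n-1}[q_{22}+\epsilon^t(n-1)]$, and the bracket collapses to $nq_{22}$ precisely when $\epsilon^tq_{22}=1$ (using $q_{22}^2=1$), which is the standing hypothesis.

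The main obstacle is bookkeeping: the powers of $\epsilon$, $q_{12}$, $q_{21}$, $q_{22}$ must be tracked carefully, and one must invoke the right commutation identity \eqref{eq:yn zt commute} in the correct direction. Once one sets $\bp_{tt}=\epsilon^tq_{22}$ and $\bp_{ij}\bp_{ji}=q_{22}^2$, however, all four formulas correspond to the standard relations of a diagonal Nichols algebra in the cases $\bp_{tt}=\pm1$, $\bp_{ij}\bp_{ji}=1$, which is a reassuring sanity check.
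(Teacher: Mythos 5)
Your overall strategy---verifying each relation with the skew derivations $\partial_1,\partial_2,\partial_3$ and reading off the $q$-commutations and nilpotency from the diagonal braiding matrix of $K^1$---is exactly the one the paper intends (the paper gives no proof here, deferring to \cite[\S 4.3]{aah-triang}, where the argument runs along these lines). Your treatments of \eqref{eq:q-serre}, \eqref{eq:zt zt+1 qcommute} and \eqref{eq:zt square is 0} are correct: $z_{|\sa|+1}=0$ is immediate from Lemma \ref{lemma:derivations-zn}; the identifications $\bp_{tt}=\epsilon^{t^2}(q_{12}q_{21})^tq_{22}=\epsilon^tq_{22}$ and $\bp_{t,t+1}\bp_{t+1,t}=q_{22}^2$ are right; and the $\partial_3$-computation for $r=z_tz_{t+1}-q_{21}q_{22}z_{t+1}z_t$ does produce an overall factor $(1-q_{22}^2)$, so $r=0$ under the hypothesis.

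The argument for \eqref{eq:zt n partial 3}, however, is internally inconsistent. Your base case is $\partial_3(z_t)=\mu_t y_t$, i.e.\ coefficient $1$ at $n=0$; but the formula you then derive (and the one displayed in the statement) carries the coefficient $n$, which at $n=0$ would force $\partial_3(z_t)=0$, contradicting Lemma \ref{lemma:derivations-zn} since $\mu_t\neq 0$ for $t\le|\sa|$. Concretely, after using $\epsilon^t=q_{22}$ your inductive step is the recursion $c_n=1+c_{n-1}$ for the numerical coefficient, and you have seeded it with $c_{n-1}=n-1$ instead of the value $c_0=1$ coming from your own base case. Running the same computation from $c_0=1$ gives $c_n=n+1$, that is
\begin{align*}
\partial_3(z_t^{n+1})&=\mu_t\,q_{21}^{nt}q_{22}^n\,(n+1)\, y_tz_t^n,
\end{align*}
which one confirms directly at $n=1$: $\partial_3(z_t^2)=\mu_tq_{21}^t(q_{22}+\epsilon^t)y_tz_t=2\mu_tq_{21}^tq_{22}\,y_tz_t$, not $\mu_tq_{21}^tq_{22}\,y_tz_t$ (and $2\neq 1$ since $p>2$). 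So the coefficient $n$ in \eqref{eq:zt n partial 3} appears to be an off-by-one misprint (the analogous statement in \cite[\S 4.3]{aah-triang} carries $n+1$); your write-up should either derive the corrected formula or flag the discrepancy, but as it stands it ``proves'' a false identity by an induction whose base case does not hold.
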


\begin{lemma}\label{lemma:x1, x12 commute with zt}
Let $\toba$ be a quotient algebra of $T(V)$. Assume that $x_1x_3=q_{12}x_3x_1$, and either
\begin{enumerate}
\item[(a)] \eqref{eq:rels B(V(1,2))}, or else
\item[(b)] \eqref{eq:rels-B(V(-1,2))-2}, $x_{21}x_3 =q_{12}^2 x_3x_{21}$
\end{enumerate}
hold in $\toba$. Then for all $n\in\N_0$,
\begin{align}\label{eq:x1, x12 commute with zt}
x_1z_n &= \epsilon^n q_{12}z_nx_1 \\
x_{21}z_n &= q_{12}^2z_nx_{21}.
\end{align}
\end{lemma}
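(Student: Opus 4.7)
The plan is to prove the two identities simultaneously by induction on $n \ge 0$. The base case $n=0$ reduces to: (i) the assumed relation $x_1 x_3 = q_{12} x_3 x_1$ for the first identity; (ii) in case (b) the relation $x_{21} x_3 = q_{12}^2 x_3 x_{21}$ is assumed directly, while in case (a) the Jordan relation \eqref{eq:rels B(V(1,2))} forces $x_{21} = x_2 x_1 - x_1 x_2 = -\tfrac{1}{2} x_1^2$, so two applications of the $x_1, x_3$ relation yield the second identity.

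For the inductive step, I would first record the auxiliary formula $g_1 \cdot z_n = \epsilon^n q_{12} z_n$; this is a purely formal consequence of the braiding \eqref{eq:braiding-block-point}, of $\epsilon^2 = 1$, and of the assumption $x_1 x_3 = q_{12} x_3 x_1$, so it is valid in any quotient of $T(V)$ satisfying the latter. The recursive identity $z_{n+1} = x_2 z_n - (g_1 \cdot z_n) x_2 = x_2 z_n - \epsilon^n q_{12} z_n x_2$ then reduces the computation of $x_1 z_{n+1}$ and $x_{21} z_{n+1}$ to sliding $x_1$ or $x_{21}$ past $x_2$ in the first term. In case (a), I would use $x_1 x_2 = x_2 x_1 + \tfrac{1}{2} x_1^2$; in case (b), I would use $x_1 x_2 = x_{21} - x_2 x_1$ (this is the definition $x_{21} = (\ad_c x_2) x_1$ with $\epsilon = -1$) together with $x_{21} x_2 = x_2 x_{21} - x_1 x_{21}$ from \eqref{eq:rels-B(V(-1,2))-2}. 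After each slide, the two inductive hypotheses transport every $x_1$ and $x_{21}$ across $z_n$ to the right, and the terms reassemble into the required right-hand sides. In case (a) the second identity at step $n+1$ actually follows from the first applied twice, since $x_{21}$ is a scalar multiple of $x_1^2$; so the genuine work is in case (b).

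The main obstacle is therefore case (b), second identity, where the two statements are genuinely coupled: starting from $x_{21} z_{n+1} = x_{21} x_2 z_n - \epsilon^n q_{12} x_{21} z_n x_2$ and invoking \eqref{eq:rels-B(V(-1,2))-2} together with both inductive hypotheses produces a term of the form $z_n x_1 x_{21}$ alongside $z_n x_{21} x_2$. The key observation is that these combine via $x_1 x_{21} + x_{21} x_2 = x_2 x_{21}$ (again \eqref{eq:rels-B(V(-1,2))-2}) into $z_n x_2 x_{21}$, which is exactly what is needed to refactor the expression as $q_{12}^2 z_{n+1} x_{21}$. The only delicate bookkeeping is to keep track of the signs $\epsilon^n$ and to invoke each inductive hypothesis at the right moment so that the $x_{21}$ factor is always the rightmost symbol when the inductive hypothesis for $x_1 z_n$ is applied inside $x_1 z_n x_{21}$.
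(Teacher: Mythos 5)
Your induction is sound and it actually supplies an argument that the paper omits: the paper only remarks that this Lemma ``is proved exactly as in \cite[\S 4.3]{aah-triang}'', so your write-up is in effect a reconstruction of that characteristic-zero proof. I checked the crux, case (b) for $x_{21}z_{n+1}$: expanding $x_{21}z_{n+1}=x_{21}x_2z_n-\epsilon^n q_{12}\,x_{21}z_nx_2$ via \eqref{eq:rels-B(V(-1,2))-2} and both inductive hypotheses, the two copies of $z_nx_1x_{21}$ cancel and one is left with $q_{12}^2(x_2z_n-\epsilon^nq_{12}z_nx_2)x_{21}=q_{12}^2z_{n+1}x_{21}$, exactly as you describe. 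The base cases and case (a) (where $x_{21}=-\tfrac12x_1^2$ reduces the second identity to the first) are also correct.

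One justification needs repair. The auxiliary formula $g_1\cdot z_n=\epsilon^nq_{12}z_n$ is \emph{not} a formal consequence of the braiding, $\epsilon^2=1$ and $x_1x_3=q_{12}x_3x_1$ alone. Writing $L=g_1\cdot(-)$ and taking $y$ with $L(y)=\lambda y$, one computes
\begin{align*}
L\bigl((\ad_c x_2)\,y\bigr) &= \lambda\epsilon\,(\ad_c x_2)\,y+\lambda\,\bigl(x_1y-\lambda\,yx_1\bigr),
\end{align*}
so the eigenvalue property propagates from $z_n$ to $z_{n+1}$ only if $x_1z_n=\epsilon^nq_{12}z_nx_1$ is already known; concretely, for $n=2$ the correction term is $(x_1x_2-\epsilon x_2x_1)x_3-q_{12}^2x_3(x_1x_2-\epsilon x_2x_1)$, whose vanishing uses \eqref{eq:rels B(V(1,2))}, respectively the hypotheses of case (b), and not merely the $x_1x_3$-relation. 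The fix costs nothing: make $g_1\cdot z_n=\epsilon^nq_{12}z_n$ a third clause of your simultaneous induction, deduced at level $n+1$ from clause (i) at level $n$; the rest of your argument is unchanged. (A minor point of hygiene: to speak of the $g_1$-action on $\toba$ one may assume $\toba$ is the quotient of $T(V)$ by exactly the listed relations, whose ideal is $g_1$-stable; the general case follows by passing to a further quotient.)
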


\begin{lemma}\label{lemma:zt zk}
Let $\toba$ be a quotient algebra of $T(V)$, $\epsilon^2=q_{22}^2=1$.

\medbreak
\noindent \emph{\vi} Assume that \eqref{eq:zt zt+1 qcommute} and \eqref{eq:zt square is 0} hold in $\toba$.
Then for $0\le t<k\le |\sa|$,
\begin{align}\label{eq:bracket ztzk}
z_tz_k&=\epsilon^{tk} q_{21}^{k-t}q_{22} z_kz_t.
\end{align}

\medbreak
\noindent \emph{\vii} Assume that $z_t^2=0$ in $\toba$ for $t\in\N_0$ such that $\epsilon^t q_{22}=-1$. Then
$z_tz_{t+1}=q_{21}q_{22} z_{t+1}z_t$ in $\toba$.
\end{lemma}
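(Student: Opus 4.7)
The proof of part (i) goes by induction on the difference $k-t$. The base case $k=t+1$ is exactly \eqref{eq:zt zt+1 qcommute}, noting that $t(t+1)$ is even so $\epsilon^{t(t+1)}=1$. For the inductive step with $k-t=m+1\ge 2$, I work in the tensor algebra $T(V)$ using two ingredients: the defining identity $z_k=x_2z_{k-1}-\epsilon^{k-1}q_{12}z_{k-1}x_2$, coming from $z_n=(\ad_c x_2)^n x_3$, and the commutation relations $x_2 z_n=\epsilon^n q_{12}z_n x_2+z_{n+1}$ from Lemma \ref{le:-1bpz}, together with their consequence $z_n x_2=\epsilon^n q_{21}(x_2 z_n-z_{n+1})$. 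Substituting the formula for $z_k$ into $z_t z_k$ and then pushing the $x_2$ factor past $z_t$, I rewrite $z_t z_k$ as a linear combination of $z_k z_t$, $z_{k-1} z_t x_2$, $z_{k-1} z_{t+1}$, $z_{t+1} z_{k-1}$, and some $z_s^2$ terms. Applying the inductive hypothesis to the pairs $(t,k-1)$ and (when $t+1<k-1$) $(t+1,k-1)$ and simplifying with $q_{12}q_{21}=1$ and $\epsilon^2=1$, the $z_{k-1} z_t x_2$ contributions cancel and the $z_{k-1} z_{t+1}$, $z_{t+1} z_{k-1}$ contributions combine into a single multiple of $z_k z_t$ with the desired coefficient $\epsilon^{tk} q_{21}^{k-t} q_{22}$.

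The main obstacle is controlling the residual error terms, which are scalar multiples of $z_s^2$ for $s\in\{t+1,k-1\}$, and verifying that \emph{whenever such a coefficient is nonzero, the condition $\epsilon^s q_{22}=-1$ is forced}, so that \eqref{eq:zt square is 0} kills the term. For instance, in the subtle base step $k=t+2$, the coefficient of $z_{t+1}^2$ simplifies to $q_{21}(\epsilon q_{22}-\epsilon^t)$; if this is nonzero then $q_{22}=-\epsilon^{t-1}$, and a short parity check in each of the four cases $(\epsilon,q_{22})\in\{\pm 1\}\times\{\pm 1\}$ shows this forces $\epsilon^{t+1}q_{22}=-1$, so $z_{t+1}^2=0$ by hypothesis. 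Completely analogous checks handle the $z_s^2$ contributions arising in the general inductive step, with the role of $q_{22}$ being played by the diagonal braiding scalar at vertex $s$.

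For part (ii), I compute directly in $T(V)$. Substituting $z_{t+1}=x_2 z_t-\epsilon^t q_{12} z_t x_2$ into $z_t z_{t+1}-q_{21}q_{22}z_{t+1}z_t$ and using $q_{21}q_{22}\epsilon^t q_{12}=\epsilon^t q_{22}$, one obtains
\begin{align*}
z_t z_{t+1}-q_{21}q_{22}z_{t+1}z_t=(1+\epsilon^t q_{22})\,z_t x_2 z_t-\epsilon^t q_{12}\,z_t^2 x_2-q_{21}q_{22}\,x_2 z_t^2.
\end{align*}
For $t$ with $\epsilon^t q_{22}=-1$, the first coefficient vanishes and the remaining two terms are scalar multiples of $z_t^2$, which is zero in $\toba$ by hypothesis. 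Hence the $q$-commutation relation holds.
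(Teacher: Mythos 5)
Your proof is correct and takes the route that the paper itself omits (it defers to \cite[\S 4.3]{aah-triang}): for \textbf{(i)}, induction on $k-t$, writing $z_k=(\ad_c x_2)z_{k-1}$, pushing the $x_2$ past $z_t$, cancelling the $z_{k-1}z_tx_2$ terms via $q_{12}q_{21}=1$ and $\epsilon^2=1$, and observing that the only surviving obstruction --- the coefficient $q_{21}(\epsilon q_{22}-\epsilon^{t})$ of $z_{t+1}^2$ when $k=t+2$ --- is nonzero precisely when $\epsilon^{t+1}q_{22}=-1$, so that \eqref{eq:zt square is 0} kills it; and for \textbf{(ii)}, the one-line identity you display is exactly right. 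The only imprecision is the phrase ``in $T(V)$'': the relations $x_2z_n=\epsilon^nq_{12}z_nx_2+z_{n+1}$ and $z_nx_2=\epsilon^nq_{21}(x_2z_n-z_{n+1})$ are not identities of the tensor algebra but hold in any quotient where $\ad_c x_1(z_m)=0$, i.e.\ under the hypotheses of Lemma \ref{lemma:x1, x12 commute with zt}, which is the case in every quotient to which Lemma \ref{lemma:zt zk} is applied.
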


In other words, \vii says that \eqref{eq:zt square is 0} for a specific $t$ implies \eqref{eq:zt zt+1 qcommute} for $t$.

\subsubsection{The Nichols algebra $\toba(\lstr( 1, \ghost))$}\label{subsubsection:lstr-11disc}

\begin{prop} \label{pr:lstr-11disc} Let $\ghost \in \I_{p-1}$. The algebra
$\toba(\lstr( 1, \ghost))$ is presented by generators $x_1,x_2, x_3$ and relations \eqref{eq:rels B(V(1,2))},
\eqref{eq:rels B(V(1,2))-x1p}, \eqref{eq:rels B(V(1,2))-x2p}, together with
\begin{align}
x_1x_3&=q_{12} \, x_3x_1,  \label{eq:lstr-rels&11disc-1} \\
z_{1+\ghost}&=0,  \label{eq:lstr-rels&11disc-qserre} \\
z_tz_{t+1}&=q_{12}^{-1} \, z_{t+1}z_t, &  0\le & t < \ghost, \label{eq:lstr-rels&11disc-2}
\\
z_t^p&=0, & 0\le & t \le  \ghost, \label{eq:lstr-rels&11disc-pot-p}\end{align}
The dimension of $\toba(\lstr( 1, \ghost))$ is $p^{\ghost + 3}$, since it has a PBW-basis
\begin{align*}
B=\{ x_1^{m_1} x_2^{m_2} z_{\ghost}^{n_{\ghost}} \dots z_1^{n_1} z_0^{n_0}: m_i, n_j \in \I_{0, p}\}.
\end{align*}.
\qed \end{prop}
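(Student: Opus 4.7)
The plan is to establish the two directions: first verify every stated relation holds in $\toba(\lstr(1,\ghost))$, then match the dimension by comparing with the known $\NA(V)\simeq K\# \NA(V_1)$ decomposition.

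First I would show the relations are indeed satisfied. The Jordan plane relations \eqref{eq:rels B(V(1,2))}, \eqref{eq:rels B(V(1,2))-x1p}, \eqref{eq:rels B(V(1,2))-x2p} hold because the subalgebra generated by $V_1$ is $\NA(V_1)\simeq \NA(\cV(1,2))$ by Lemma~\ref{prop:1block}. For \eqref{eq:lstr-rels&11disc-1}, an easy computation with the skew-derivations $\partial_1,\partial_2,\partial_3$ shows that $x_1x_3 - q_{12}x_3x_1$ is annihilated by all of them (using $q_{12}q_{21}=1$), so it vanishes in $\NA(V)$. The quantum Serre-type relation \eqref{eq:lstr-rels&11disc-qserre} is exactly the content of Lemma~\ref{lemma:derivations-zn}: the element $z_{1+\ghost}$ has trivial derivatives under $\partial_1,\partial_2$ and $\partial_3(z_{1+\ghost})$ is a scalar multiple of $\mu_{1+\ghost}$, which is zero by \eqref{eq:def-mu-n}. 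The $q$-commutation \eqref{eq:lstr-rels&11disc-2} is \eqref{eq:zt zt+1 qcommute} from Lemma~\ref{lemma:relations L(pm1,pm1,G)} specialized to $q_{22}=1$ and $q_{21}=q_{12}^{-1}$. Finally, \eqref{eq:lstr-rels&11disc-pot-p} follows from \eqref{eq:zt n partial 3}: with $n=p-1$ and $q_{22}=1$ the coefficient $(p-1)$ makes the formula trivial only mod $p$, while the other skew-derivations vanish on $z_t$, so $z_t^p$ is primitive of positive degree and must be zero.

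Next I would bound $\dim\cBt \le p^{\ghost+3}$, where $\cBt$ is the algebra presented by the stated generators and relations. Every monomial can be rewritten as an ordered product $x_1^{m_1}x_2^{m_2} z_\ghost^{n_\ghost}\cdots z_1^{n_1}z_0^{n_0}$: we first expand any occurrence of $x_3$ by $x_3=z_0$ and more generally use $z_t = (\ad_c x_2)^t x_3$ to absorb interactions with $x_2$; then Lemma~\ref{lemma:x1, x12 commute with zt} (whose hypotheses hold here thanks to the Jordan plane relations and \eqref{eq:lstr-rels&11disc-1}) moves all $x_1,x_2$ past the $z_t$'s. To order the $z_t$'s we invoke Lemma~\ref{lemma:zt zk}\vi: its hypothesis \eqref{eq:zt square is 0} is vacuous because $\epsilon^tq_{22}=1$ for all $t$, and \eqref{eq:zt zt+1 qcommute} is available, hence $z_tz_k = q_{21}^{k-t}z_kz_t$ for all $t<k$. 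Together with $z_{\ghost+1}=0$ and $z_t^p=0$, this forces the exponents $m_i,n_j$ into $\I_{0,p-1}$ and yields $\dim \cBt\le p^{\ghost+3}$.

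To finish, I would match the dimensions. By \cite[Proposition 8.6]{HS} we have $\NA(V)\simeq K\otimes \NA(V_1)$ as vector spaces, with $\dim \NA(V_1)=p^2$ (Lemma~\ref{prop:1block}). The analysis in Case~\ref{case:1} of Section~\ref{subsection:weak}, combined with Lemma~\ref{lemma:K-basis}, shows that $K^1$ is diagonal of dimension $\ghost+1$ with totally disconnected Dynkin diagram whose vertices all carry the label $1$; consequently $K\simeq \rests_{\bq}(K^1)$ from Example~\ref{exa:qls}, so $\dim K = p^{\ghost+1}$ and $\dim \NA(V)=p^{\ghost+3}$. Since the canonical projection $\cBt\twoheadrightarrow \NA(V)$ is surjective and the two dimensions coincide, it is an isomorphism and the spanning set $B$ is a basis.

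The only delicate step is the monomial reordering in paragraph two: one must guarantee that only the adjacent relations \eqref{eq:lstr-rels&11disc-2} are needed to reorder arbitrary products of the $z_t$'s. This is handled cleanly by Lemma~\ref{lemma:zt zk}\vi, whose proof (in the referenced work) extends the adjacent $q$-commutation to all pairs. Everything else is a bookkeeping check: verifying that the relations close under the required rewrites and that no unexpected identities among the proposed basis elements arise before the dimension count closes the argument.
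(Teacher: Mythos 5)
Your proposal is correct and follows essentially the route the paper intends: verify the relations via the lemmas of \S\ref{subsection:weak} and \S\ref{subsection:point-block-presentation}, bound the presented algebra by a reordering/spanning argument, and match dimensions through $\NA(V)\simeq K\#\NA(V_1)$ with $K^1$ totally disconnected of label $1$ (Case \ref{case:1}), so that $\dim K=p^{\ghost+1}$ by Example \ref{exa:qls} and $\dim\NA(V)=p^{\ghost+3}$. The one slip is your justification of \eqref{eq:lstr-rels&11disc-pot-p}: the coefficient $p-1$ you extract from \eqref{eq:zt n partial 3} is \emph{not} zero mod $p$; the correct computation is $\partial_3(z_t^{m})=(m)_{q_{22}}\,\mu_t q_{21}^{t(m-1)}y_tz_t^{m-1}$, whose coefficient $(p)_1=p$ vanishes exactly for $m=p$ (equivalently, $z_t$ is a point of label $1$ in $K^1$, so $z_t^p=0$ already inside $K\simeq\NA(K^1)$, which your final paragraph establishes independently). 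So the overall argument closes, but that particular sentence should be corrected rather than relied upon.
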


\subsubsection{The Nichols algebra $\toba(\lstr( -1, \ghost))$}\label{subsubsection:lstr-1-1disc}

\begin{prop} \label{pr:lstr1-1disc} Let $\ghost \in \I_{p-1}$. The algebra
$\toba(\lstr( -1, \ghost))$ is presented by generators $x_1,x_2, x_3$ and relations  \eqref{eq:rels B(V(1,2))},
\eqref{eq:rels B(V(1,2))-x1p}, \eqref{eq:rels B(V(1,2))-x2p}, 
\eqref{eq:lstr-rels&11disc-1}, \eqref{eq:lstr-rels&11disc-qserre} and
\begin{align}\label{eq:lstr-rels&1-1disc}
z_t^2&=0, & 0\le& t\le \ghost.
\end{align}
The dimension of $\toba(\lstr( 1, \ghost))$ is $2^{\ghost + 1} p^{2}$, since it has a PBW-basis
\begin{align*}
B=\{ x_1^{m_1} x_2^{m_2} z_{\ghost}^{n_{\ghost}} \dots z_1^{n_1} z_0^{n_0}: n_i \in\{0,1\}, m_j \in \I_{0, p-1} \}.
\end{align*}
\qed
\end{prop}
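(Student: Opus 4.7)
The plan is to follow the template of Proposition \ref{pr:lstr-11disc}, adapted to the case $\epsilon = 1$, $q_{22} = -1$. First I would verify that the stated relations all vanish in $\toba(\lstr(-1,\ghost))$: the Jordan plane relations \eqref{eq:rels B(V(1,2))}, \eqref{eq:rels B(V(1,2))-x1p}, \eqref{eq:rels B(V(1,2))-x2p} hold by Lemma \ref{prop:1block}; the commutation \eqref{eq:lstr-rels&11disc-1} is $\ad_c x_1(z_0) = 0$, i.e.\ \eqref{eq:adx1-zn} at $n=0$; the truncation \eqref{eq:lstr-rels&11disc-qserre} is Lemma \ref{lemma:derivations-zn}, which says $z_n = 0$ iff $n > |\sa| = \ghost$; and \eqref{eq:lstr-rels&1-1disc} holds because, as observed in \S \ref{subsection:weak} (Case \ref{case:1}), each $z_t$ spans a one-dimensional braided subspace of $K^1$ of diagonal type with self-braiding $\bp_{tt} = \epsilon^t q_{22} = -1$, so $z_t^2 = 0$ by Example \ref{exa:dim1} (using $p>2$).

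For the dimension I would appeal to the splitting $\toba(V) \simeq K \# \toba(V_1)$ of \cite[Proposition 8.6]{HS}, exploited throughout Section \ref{subsection:weak}. Here $V_1 = \cV(1,2)$ is the Jordan plane, so $\dim \toba(V_1) = p^2$ by Lemma \ref{prop:1block}. The coinvariants satisfy $K = \toba(K^1)$ where, by Lemma \ref{lemma:K-basis}, $K^1$ has basis $(z_n)_{0 \le n \le \ghost}$; its diagonal braiding falls into Case \ref{case:1}, being totally disconnected with every vertex labelled $-1$. Hence $K$ is a braided exterior algebra on $\ghost + 1$ generators, so $\dim K = 2^{\ghost+1}$ and consequently $\dim \toba(V) = 2^{\ghost+1} p^2 = |B|$.

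Let $\cBt$ denote the algebra presented by the stated generators and relations. By the first paragraph there is a natural surjection $\cBt \twoheadrightarrow \toba(V)$, so it suffices to prove that $B$ spans $\cBt$, for then $\dim \cBt \le |B| = \dim \toba(V)$ forces an isomorphism and $B$ to be a basis. The spanning argument is a reordering: \vii of Lemma \ref{lemma:zt zk} upgrades \eqref{eq:lstr-rels&1-1disc} to the $q$-commutation \eqref{eq:zt zt+1 qcommute}, and then \vi of the same lemma yields $z_t z_k = q_{21}^{k-t} q_{22} z_k z_t$ for all $0 \le t < k \le \ghost$; Lemma \ref{lemma:x1, x12 commute with zt} together with the identity $x_2 z_n = q_{12} z_n x_2 + z_{n+1}$ from Lemma \ref{le:-1bpz} moves every $z$ to the right of $x_1, x_2$; and the Jordan plane relations reduce any monomial in $x_1, x_2$ to the form $x_1^{m_1} x_2^{m_2}$ with $m_i \in \I_{0,p-1}$.

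The main obstacle I expect is this last reordering step: commuting $x_2$ past $z_n$ creates an extra $z_{n+1}$, so termination is not immediate. I would induct on the ordered pair (total degree in $\langle z_0,\dots,z_\ghost\rangle$, length in $\{x_1, x_2\}$); the relation \eqref{eq:lstr-rels&11disc-qserre} ensures that the cascade $z_{n+1}, z_{n+2}, \dots$ cuts off at $n = \ghost$, keeping the induction bounded. Everything else is either standard or already supplied by Section \ref{subsection:weak} and \S \ref{subsection:point-block-presentation}, after which the dimension count closes the argument.
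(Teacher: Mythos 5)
Your proposal is correct and follows exactly the route the paper intends: the paper omits the proof of this proposition, relying on the splitting $\NA(V)\simeq K\#\NA(V_1)$, the identification of $K^1$ as totally disconnected of diagonal type with all labels $-1$ (Case \ref{case:1}), and the general reordering lemmas of \S\ref{subsection:point-block-presentation} (Lemmas \ref{lemma:relations L(pm1,pm1,G)}, \ref{lemma:x1, x12 commute with zt}, \ref{lemma:zt zk}), which is precisely what you assemble. The only loose end is the bookkeeping for termination of the rewriting (your two-component induction needs a tie-breaker such as the number of inversions for the first term of $x_2z_n=q_{12}z_nx_2+z_{n+1}$), but this is routine and not a genuine gap.
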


\subsubsection{The Nichols algebra $\toba(\lstr_{-}( 1, \ghost))$}\label{subsubsection:lstr--11disc}

\begin{prop} \label{pr:lstr--11disc} Let $\ghost \in \I_{2p-1}\cap 2\Z$. The algebra
$\toba(\lstr_{-}( 1, \ghost))$ is presented by generators $x_1,x_2, x_3$ and relations \eqref{eq:rels-B(V(-1,2))-1},
\eqref{eq:rels-B(V(-1,2))-2}, \eqref{eq:rels-B(V(-1,2))-4},  \eqref{eq:rels-B(V(-1,2))-3}, 
\eqref{eq:lstr-rels&11disc-1} and
\begin{align}
z_{1+\ghost}&=0, \label{eq:lstr-rels&-11disc-1} \\
x_{21}z_0& = q_{12}^2 \, z_0x_{21},  \label{eq:lstr-rels&-11disc-2} \\
z_{2k+1}^2&=0, &  0\le & k < \ghost/2, \label{eq:lstr-rels&-11disc-3} \\
z_{2k} z_{2k+1}&= q_{12}^{-1} \, z_{2k+1}z_{2k}, & 0\le & k < \ghost/2. \label{eq:lstr-rels&-11disc-4}
\end{align}
The dimension of $\toba(\lstr( 1, \ghost))$ is $2^{\frac{\ghost}{2} +2} p^{\frac{\ghost}{2}+3}$, since it has a PBW-basis
\begin{align*}
B=\{ x_1^{m_1} x_{21}^{m_2} x_2^{m_3} z_{\ghost}^{n_{\ghost}} \dots z_1^{n_1} z_0^{n_0}: m_1, n_{2k+1} \in\{0,1\},\\ m_3 \in \I_{0,2p-1}, m_2, n_{2k} \in\I_{0,p-1} \}.
\end{align*}
\qed
\end{prop}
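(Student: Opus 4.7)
The plan is to follow the template of Propositions \ref{pr:lstr-11disc} and \ref{pr:lstr1-1disc}: first verify each listed relation in $\NA(V)$, then show that the corresponding quotient $\cBt$ of $T(V)$ is spanned by $B$, and finally compare $|B|$ with the dimension of $\NA(V)$ as computed via $\NA(V)\simeq K\#\NA(V_1)$. The super Jordan plane relations \eqref{eq:rels-B(V(-1,2))-1}--\eqref{eq:rels-B(V(-1,2))-3} come from Proposition \ref{prop:-1block} applied to the subalgebra generated by $V_1$; the commutation \eqref{eq:lstr-rels&11disc-1} follows from the weak interaction $q_{12}q_{21}=1$ by checking the skew-derivations $\partial_i$; the $q$-Serre relation $z_{1+\ghost}=0$ is Lemma \ref{lemma:derivations-zn} combined with the ghost convention \eqref{eq:discrete-ghost}; the relation $x_{21}z_0=q_{12}^2z_0x_{21}$ is part of \eqref{eq:-1block+point}; since $\epsilon^{2k+1}q_{22}=-1$, the squares \eqref{eq:lstr-rels&-11disc-3} come from Lemma \ref{lemma:relations L(pm1,pm1,G)}; and Lemma \ref{lemma:zt zk}\vii yields \eqref{eq:lstr-rels&-11disc-4}. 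This produces a surjection $\cBt\twoheadrightarrow \NA(V)$.

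To see that $\cBt$ is spanned by $B$, I would use Lemma \ref{lemma:x1, x12 commute with zt}(b) to $q$-commute $x_1$ and $x_{21}$ past each $z_n$, with \eqref{eq:lstr-rels&-11disc-2} as the base and $z_{n+1}=x_2z_n-\epsilon^n q_{12}z_nx_2$ as the inductive step; the commutation $x_2z_n=\epsilon^n q_{12}z_nx_2+z_{n+1}$ then pushes $x_2$ leftward at the cost of a higher-degree $z$-term, and the process terminates thanks to $z_{\ghost+1}=0$. Within the $z$-subalgebra, applying Lemma \ref{lemma:zt zk}\vii for each odd index supplies the commutation $z_{2k+1}z_{2k+2}=q_{21}z_{2k+2}z_{2k+1}$ not present among the explicit relations; combined with \eqref{eq:lstr-rels&-11disc-4} this gives \eqref{eq:zt zt+1 qcommute} for every $t$, and Lemma \ref{lemma:zt zk}\vi then orders the $z_n$'s in descending indices.

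The truncations $m_1,n_{2k+1}\in\{0,1\}$, $m_2\in\I_{0,p-1}$ and $m_3\in\I_{0,2p-1}$ are immediate from the listed relations; the delicate one is $n_{2k}\in\I_{0,p-1}$, which requires $z_{2k}^p=0$ in $\cBt$. This power-of-$p$ truncation is conspicuously absent from the statement, in contrast with \eqref{eq:lstr-rels&11disc-pot-p} in Proposition \ref{pr:lstr-11disc}, and deriving it from the remaining relations is what I expect to be the principal obstacle. I would first try to prove $z_0^p=x_3^p=0$ by mimicking the calculus of Proposition \ref{prop:-1block}: $\partial_1$ and $\partial_2$ kill $x_3^p$, while $\partial_3(x_3^p)=(p)_{q_{22}}x_3^{p-1}=0$ since $q_{22}=1$. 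Then $z_{2k}^p=0$ for $k\geq 1$ should propagate inductively from $z_{k+1}=(\ad_c x_2)z_k$ and the formulas of \eqref{eq:-1block+point}. If this derivation is not available from the listed relations alone, then the presentation should be augmented with $z_{2k}^p=0$, $0\le k\le \ghost/2$, and the remainder of the argument is unaffected.

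Finally, the dimension count uses \S\ref{subsection:weak} Case \ref{case:1} applied with $\epsilon=-1$, $q_{22}=1$: $K^1$ is totally disconnected with $\ghost/2+1$ vertices of label $1$ (contributing $p$ each) and $\ghost/2$ of label $-1$ (contributing $2$ each), so $\dim K=2^{\ghost/2}p^{\ghost/2+1}$. Combined with $\dim\NA(V_1)=4p^2$ from Proposition \ref{prop:-1block} this yields $\dim\NA(V)=2^{\ghost/2+2}p^{\ghost/2+3}=|B|$. Together with the surjection $\cBt\twoheadrightarrow \NA(V)$ and the spanning by $B$, the sandwich $|B|\ge \dim\cBt\ge \dim\NA(V)=|B|$ forces equality, so $B$ is a basis of $\cBt\simeq \NA(V)$.
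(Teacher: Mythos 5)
Your reconstruction follows the route the paper intends (the proof is omitted there, with a blanket reference to the characteristic-zero arguments of \cite[\S 4.3]{aah-triang}): verify the relations in $\NA(V)$ via Lemmas \ref{le:-1bpz}, \ref{lemma:derivations-zn} and \ref{lemma:relations L(pm1,pm1,G)}, span the quotient $\cBt$ by $B$ using Lemmas \ref{lemma:x1, x12 commute with zt} and \ref{lemma:zt zk}, and close the sandwich with $\dim K=2^{\ghost/2}p^{\ghost/2+1}$ from Case \ref{case:1} and $\dim\NA(V_1)=4p^2$. The dimension count and the overall logic are correct. One small slip: \eqref{eq:lstr-rels&-11disc-4} concerns even $t=2k$, where $\epsilon^{t}q_{22}=1$, so Lemma \ref{lemma:zt zk}\vii does not produce it; it holds in $\NA(V)$ directly by \eqref{eq:zt zt+1 qcommute} of Lemma \ref{lemma:relations L(pm1,pm1,G)}. (Your use of \vii at the odd indices inside $\cBt$, to supply the commutations not listed among the defining relations, is the correct use of that lemma.)

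Your hesitation about $z_{2k}^p=0$ is well founded, and you can sharpen it into a proof that the presentation as printed is incomplete: none of the listed relations can yield $x_3^p=0$. Indeed, every listed relation lies in the two-sided ideal of $T(V)$ generated by $x_1$ and $x_2$ (each monomial of $z_n$ for $n\ge 1$ contains $x_2$, and $x_{21}$ contains both), so $\cBt/(x_1,x_2)\simeq \ku[x_3]$ is infinite-dimensional, whence $z_0^p=x_3^p\neq 0$ in $\cBt$. Thus the relations $z_{2k}^p=0$, $0\le k\le \ghost/2$, must be added to the statement, exactly parallel to \eqref{eq:lstr-rels&11disc-pot-p} in Proposition \ref{pr:lstr-11disc} (the same omission occurs for the odd-indexed $z$'s in Proposition \ref{pr:lstr-1-1disc}). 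Your verification that these extra relations do hold in $\NA(V)$ --- $\partial_1,\partial_2$ kill them and $\partial_3(z_{2k}^{p})$ carries the factor $(p)_{1}=0$, cf.\ \eqref{eq:zt n partial 3} --- is correct, and with them included the rest of your argument goes through unchanged. So treat this not as an obstacle to overcome but as an erratum to the stated relation list.
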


\subsubsection{The Nichols algebra $\toba(\lstr_{-}( -1, \ghost))$}\label{subsubsection:lstr--1-1disc}

\begin{prop} \label{pr:lstr-1-1disc} Let $\ghost \in \I_{2p-1}\cap 2\Z$. The algebra
$\toba(\lstr_{-}( -1, \ghost))$ is presented by generators $x_1,x_2, x_3$ and relations \eqref{eq:rels-B(V(-1,2))-1},
\eqref{eq:rels-B(V(-1,2))-2}, \eqref{eq:rels-B(V(-1,2))-4},  \eqref{eq:rels-B(V(-1,2))-3}, \eqref{eq:lstr-rels&11disc-1}, \eqref{eq:lstr-rels&-11disc-1}, \eqref{eq:lstr-rels&-11disc-2}
and
\begin{align}
z_{2k}^2&=0, &  0\le & k \le \ghost/2, \label{eq:lstr-rels&-1-1disc-1} \\
z_{2k-1} z_{2k}&= -q_{12}^{-1} z_{2k}z_{2k-1}, & 0< & k \le \ghost/2. \label{eq:lstr-rels&-1-1disc-2}
\end{align}
The dimension of $\toba(\lstr( 1, \ghost))$ is $2^{\frac{\ghost}{2} +3} p^{\frac{\ghost}{2}+2}$, since it has a PBW-basis
\begin{align*}
B=\{ x_1^{m_1} x_{21}^{m_2} x_2^{m_3} z_{\ghost}^{n_{\ghost}} \dots z_1^{n_1} z_0^{n_0}: m_1, n_{2k} \in\{0,1\}, \\ m_3 \in\I_{0,2p-1}, m_2, n_{2k-1} \in\N_0 \}.
\end{align*}
\qed
\end{prop}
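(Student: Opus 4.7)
The plan is to follow the template of the four preceding propositions in \S\ref{subsection:point-block-presentation}: let $\cBt$ denote the quotient of $T(V)$ by the listed relations, exhibit a canonical surjection $\cBt \twoheadrightarrow \toba(\lstr_{-}(-1,\ghost))$, verify that the proposed PBW-set $B$ spans $\cBt$, and match dimensions by means of the decomposition $\toba(V)\simeq K\#\toba(V_1)$ of \S\ref{subsection:YD3-setting}.

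For the surjection I would check that each listed relation vanishes in $\toba(V)$. The super Jordan plane relations \eqref{eq:rels-B(V(-1,2))-1}--\eqref{eq:rels-B(V(-1,2))-3} hold by Proposition \ref{prop:-1block} applied to $V_1$. The $q$-commutations \eqref{eq:lstr-rels&11disc-1} and \eqref{eq:lstr-rels&-11disc-2} are instances of \eqref{eq:-1block+point} from Lemma \ref{le:-1bpz}. The truncation $z_{1+\ghost}=0$ in \eqref{eq:lstr-rels&-11disc-1} is Lemma \ref{lemma:derivations-zn}, since $|\sa|=\ghost$ in the $\epsilon=-1$ convention \eqref{eq:discrete-ghost}. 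For the present case $\epsilon=q_{22}=-1$, the braiding matrix $\bp_{ij}=(-1)^{ij}q_{12}^iq_{21}^jq_{22}$ of $K^1$ has diagonal $\bp_{tt}=(-1)^{t+1}$; Lemma \ref{lemma:relations L(pm1,pm1,G)}, specifically \eqref{eq:zt square is 0} with $\epsilon^{2k}q_{22}=-1$, then yields \eqref{eq:lstr-rels&-1-1disc-1}. Finally \eqref{eq:lstr-rels&-1-1disc-2} follows from \eqref{eq:zt zt+1 qcommute} (applied at $t=2k-1$) after noting $q_{21}q_{22}=-q_{12}^{-1}$ by weak interaction.

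For the dimension count I would invoke $K\simeq\toba(K^1)$, where $K^1$ has basis $(z_t)_{0\le t\le\ghost}$ by Lemma \ref{lemma:K-basis}. Since $\bp_{ij}\bp_{ji}=q_{22}^2=1$, the Dynkin diagram of $K^1$ is totally disconnected with vertex labels $\bp_{2k,2k}=-1$ and $\bp_{2k+1,2k+1}=1$. By Example \ref{exa:dim1}, each even-indexed $z_t$ contributes a factor $2$ and each odd-indexed one a factor $p$, giving $\dim K=2^{\ghost/2+1}p^{\ghost/2}$. Combined with $\dim\toba(V_1)=4p^2$ from Proposition \ref{prop:-1block}, this produces $\dim\toba(V)=2^{\ghost/2+3}p^{\ghost/2+2}=|B|$.

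To show $B$ spans $\cBt$ I would check that the subspace $\langle B\rangle$ is a left ideal: the super Jordan plane relations reorder $x_1,x_{21},x_2$; Lemma \ref{lemma:x1, x12 commute with zt} pushes $x_1$ and $x_{21}$ past each $z_t$; while \eqref{eq:lstr-rels&-1-1disc-1}, \eqref{eq:lstr-rels&-1-1disc-2} together with Lemma \ref{lemma:zt zk} reorder the $z_t$'s. The main obstacle is the implicit vanishing $z_{2k+1}^p=0$, which is not listed among the defining relations but is forced by the cardinality of $B$. The cleanest route is to establish that $\cBt$ is a pre-Nichols algebra of $V$ --- verifying that each defining relation is skew-primitive modulo the others, so the ideal is a Hopf ideal --- whence $z_{2k+1}^p$, being a primitive element of positive degree in the associated pre-Nichols algebra with self-braiding $1$, must vanish in characteristic $p$, mirroring the argument by which $x_{21}^p=0$ is derived inside $\toba(V_1)$ in Proposition \ref{prop:-1block}. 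Once spanning is established, the surjection $\cBt\twoheadrightarrow\toba(V)$ becomes an isomorphism by dimension comparison and $B$ is a basis.
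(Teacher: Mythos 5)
Your overall architecture is correct and is essentially the paper's: the listed relations hold in $\toba(V)$ by Lemmas \ref{le:-1bpz}, \ref{lemma:derivations-zn} and \ref{lemma:relations L(pm1,pm1,G)}; the diagonal braiding of $K^1$ has $\bp_{tt}=(-1)^{t+1}$ since $\epsilon=q_{22}=-1$, so Case \ref{case:1} applies; and the count $\dim K=2^{\ghost/2+1}p^{\ghost/2}$, $\dim\toba(V)=\dim K\cdot\dim\toba(V_1)=2^{\ghost/2+3}p^{\ghost/2+2}$ is exactly how the entry of Table \ref{tab:toba-finitedim-block-point} is obtained. The identification of $q_{21}q_{22}=-q_{12}^{-1}$ and the reduction of \eqref{eq:lstr-rels&-1-1disc-1}, \eqref{eq:lstr-rels&-1-1disc-2} to \eqref{eq:zt square is 0}, \eqref{eq:zt zt+1 qcommute} are all fine.

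The gap is in your treatment of the powers $z_{2k-1}^p$. It is not true that a primitive element of positive degree in a pre-Nichols algebra must vanish: $T(\ku x)=\ku[x]$ with trivial self-braiding is a pre-Nichols algebra in which $x^p$ is primitive and nonzero; primitivity forces vanishing only in the Nichols algebra itself. This is also not how $x_{21}^p=0$ is handled in Proposition \ref{prop:-1block}: there the vanishing is proved \emph{in $\toba(\cV)$} by the derivation criterion, and \eqref{eq:rels-B(V(-1,2))-4} is then \emph{adjoined} to the list of defining relations rather than derived from the others. The same must happen here. That $z_{2k-1}^p=0$ holds in $\toba(V)$ is clear, since $z_{2k-1}$ is a basis vector of the diagonal braided vector space $K^1$ with self-braiding $\bp_{2k-1,2k-1}=1$ and totally disconnected diagram, so Example \ref{exa:dim1} applies inside $K=\toba(K^1)\subseteq\toba(V)$; but your Hopf-ideal argument does not show that $z_{2k-1}^p$ vanishes in $\cBt$. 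Either these relations must be added to the presentation (exactly as \eqref{eq:lstr-rels&11disc-pot-p} is added in the parallel Proposition \ref{pr:lstr-11disc}), or you must derive them from the listed relations by an actual computation. Relatedly, the ranges $m_2,n_{2k-1}\in\N_0$ in the displayed set $B$ must be read as $\I_{0,p-1}$ for $|B|$ to equal $2^{\ghost/2+3}p^{\ghost/2+2}$; as literally written $B$ is infinite. With the power relations adjoined and the ranges corrected, your spanning argument and the dimension comparison close the proof correctly.
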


\subsubsection{The Nichols algebra $\toba(\lstr( \omega, 1))$}\label{subsubsection:lstr-1omega1}

\begin{prop} \label{pr:lstr1omega1} Let $\omega \in \G'_3$. The algebra
$\toba(\lstr( \omega, 1))$ is presented by generators $x_1,x_2, x_3$ and relations  \eqref{eq:rels B(V(1,2))},
\eqref{eq:rels B(V(1,2))-x1p}, \eqref{eq:rels B(V(1,2))-x2p},\eqref{eq:lstr-rels&11disc-1}, 
\begin{align}\label{eq:lstr1omega1-qserre}
z_2 & =0, 
\\ \label{eq:lstr1omega1-z0cube}
z_0^3 & =0,
\\ \label{eq:lstr-rels&1omega1}
z_1^3&=0,
\\ \label{eq:lstr-rels&1omega10} z_{1,0}^3&=0.
\end{align}

The dimension of $\toba(\lstr( \omega, 1))$ is $3^3p^{2}$, since it has a PBW-basis
\begin{align*}
B=\{ x_1^{m_1} x_2^{m_2} z_1^{n_1} z_{1,0}^{n_2} z_0^{n_3}: m_i\in\I_{0,p-1},   n_j \in\I_{0,2} \}.
\end{align*} \qed
\end{prop}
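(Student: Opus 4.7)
The plan is to follow the template established for the previous cases in \S\ref{subsection:weak}: compute $K^1=\ad_c\NA(V_1)(V_2)$, identify $K=\NA(K^1)$ as a Nichols algebra of diagonal type whose structure is already known, and then assemble the presentation and dimension of $\NA(V)\simeq K\#\NA(V_1)$. Here $\epsilon=1$ and $\ghost=1$, so $\sa=-1$ and Lemma \ref{lemma:derivations-zn} yields $z_2=0$ (which is \eqref{eq:lstr1omega1-qserre}) while $z_0,z_1\ne 0$. By Lemma \ref{lemma:K-basis}, $(z_0,z_1)$ is therefore a basis of $K^1$. The braiding computation preceding Case \ref{case:2} gives diagonal entries $\bp_{00}=\bp_{11}=\omega$ and $\bp_{01}\bp_{10}=\omega^2$, so $K^1$ is of Cartan type $A_2$ with parameter $\omega\in\G'_3$. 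Consequently $K=\NA(K^1)$ is the small quantum Serre algebra of type $A_2$ at $\omega$, with $\dim K=27$, a PBW basis $\{z_1^{n_1}z_{1,0}^{n_2}z_0^{n_3}:0\le n_j\le 2\}$, and defining relations $z_0^3=z_1^3=z_{1,0}^3=0$, i.e. \eqref{eq:lstr1omega1-z0cube}, \eqref{eq:lstr-rels&1omega1}, \eqref{eq:lstr-rels&1omega10}.

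Next I would verify that all the relations listed in the statement actually hold in $\NA(V)$. Relations \eqref{eq:rels B(V(1,2))}, \eqref{eq:rels B(V(1,2))-x1p}, \eqref{eq:rels B(V(1,2))-x2p} hold since $\NA(V_1)$ embeds in $\NA(V)$ as the Jordan plane, by Lemma \ref{prop:1block}. The $q$-commutation \eqref{eq:lstr-rels&11disc-1} is just $\ad_c x_1(x_3)=0$, which is the $n=0$ case of \eqref{eq:adx1-zn}. Relation \eqref{eq:lstr1omega1-qserre} has already been noted above, and the three cubic relations are the $A_2$ Serre-type relations for $K^1$ in the degree-three component of $K\subseteq\NA(V)$.

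For the PBW statement, let $\widetilde\toba$ denote the quotient of $T(V)$ by all the listed relations. Using Lemma \ref{lemma:x1, x12 commute with zt} (which applies because $x_1 x_3 = q_{12} x_3 x_1$ and \eqref{eq:rels B(V(1,2))} hold), the elements $x_1,x_2$ $q$-commute with each $z_n$ modulo lower-degree $z$-terms; combined with the Jordan plane relations and the quantum Serre relations of type $A_2$, a standard straightening argument shows that $\widetilde\toba$ is linearly spanned by
\[
B=\{x_1^{m_1}x_2^{m_2}z_1^{n_1}z_{1,0}^{n_2}z_0^{n_3}:0\le m_i\le p-1,\ 0\le n_j\le 2\}.
\]
Hence $\dim\widetilde\toba\le p^2\cdot 27$. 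Since $\widetilde\toba$ surjects onto $\NA(V)\simeq K\#\NA(V_1)$, which by the preceding paragraph has dimension $27p^2$, both dimensions coincide, $B$ is a basis of $\NA(V)$, and the listed relations give a presentation.

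The main obstacle is the spanning step: one has to control the mixed monomials, moving every $x_1,x_2$ to the left of every $z_k$ at the cost of additional $z$-terms of strictly lower bidegree, and to check that the resulting reductions terminate within the bounds $m_i\le p-1$ and $n_j\le 2$. This is where Lemma \ref{lemma:x1, x12 commute with zt} and Remark \ref{rem:xk qcommutes with z_k} (via \eqref{eq:yn zt commute}) do the essential work, exactly as in the analogous Propositions \ref{pr:lstr-11disc}--\ref{pr:lstr-1-1disc}; the only new ingredient is the cubic truncation in the $A_2$-direction, which is insensitive to the characteristic since $3\ne p$ under our standing assumption $p>2$ combined with $\omega\in\G'_3$ forcing $p\ne 3$.
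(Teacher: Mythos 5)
Your proposal follows exactly the strategy the paper sets up in \S\ref{subsection:weak} and invokes for this case (Case \ref{case:2}): identify $K^1$ with basis $z_0,z_1$ as Cartan type $A_2$ at $\omega$, conclude $\dim K=3^3$ with the standard PBW basis, verify the listed relations via Lemmas \ref{lemma:derivations-zn}, \ref{lemma:K-basis} and \ref{lemma:x1, x12 commute with zt}, and assemble everything through $\NA(V)\simeq K\#\NA(V_1)$ with a spanning-plus-dimension-count — which is precisely the argument the paper leaves implicit by deferring to \cite[\S 4.3]{aah-triang}. The only slip is cosmetic: $x_2$ $q$-commutes with $z_n$ modulo the \emph{higher}-index term $z_{n+1}$ (which dies because $z_2=0$), not modulo lower-degree $z$-terms.
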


\subsection{Mild interaction}\label{subsection:mild}
\emph{We assume in this Subsection that $q_{12}q_{21}=-1 = \epsilon$, $a=1$, $q_{22}= -1$}.
The corresponding braided vector space is denoted $\cyc_1$, as above. We proceed as above but now
the elements  $z_n = (\ad_c x_2)^n x_3$ are not enough to describe $K^1$ and we need
$ f_n=\ad_c x_1 (z_n)$, $n = 0,1$.
Then
\begin{align}\label{rem:nichols-mild-relations}
\begin{aligned}
x_1z_0 & =f_0+q_{12}z_0x_1, \\ x_1z_1 & =f_1-q_{12}z_1x_1+q_{12}f_0x_1, \\ x_2z_0 & =z_1+q_{12}z_0x_2.
\end{aligned}
\end{align}

\begin{prop} \label{prop:pm1bp-mild} The Nichols algebra
$\toba(\cyc_1)$ is presented by generators $x_1,x_2, x_3$ and relations \eqref{eq:rels-B(V(-1,2))-1},
\eqref{eq:rels-B(V(-1,2))-2}, \eqref{eq:rels-B(V(-1,2))-4},  \eqref{eq:rels-B(V(-1,2))-3},
\begin{align}\label{eq:nichols-mild-relation2}
\begin{aligned}
x_2z_1+q_{12}z_1x_2&=\frac 1 2 f_1+q_{12}f_0x_2, \\ x_2f_1 &=q_{12}f_1x_2,  \quad x_2f_0+q_{12}f_0x_2=-f_1,  \\
z_0^2 &=0, \quad f_0^2 = 0, \quad z_1^2 =0, \qquad f_1^2 = 0.
\end{aligned}
\end{align}
The dimension of $\toba(\cyc_1)$ is $ 64 p^{2}$, since it has a PBW-basis
\begin{align*}
B=\{ x_1^{m_1} x_{21}^{m_2} x_2^{m_3} f_1^{n_1} f_0^{n_2} z_1^{n_3} z_0^{n_4}: m_1,n_i \in\{0,1\}, m_2,m_3 \in\I_{p}\}.
\end{align*}
\qed
\end{prop}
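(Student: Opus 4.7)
The plan is to follow the strategy used for the weak interaction cases and reduce to a Nichols algebra computation over the block: set $K = \toba(V)^{\mathrm{co}\,\toba(V_1)}$ so that $\toba(V) \simeq K \# \toba(\cV(-1,2))$ by \cite[Proposition 8.6]{HS}, with $K \simeq \toba(K^1)$ and $K^1 = \ad_c \toba(V_1)(V_2)$. Since the block contributes $\dim \toba(\cV(-1,2)) = 4p^2$ by Proposition~\ref{prop:-1block}, once we show $\dim K = 16$ the overall dimension $64p^2$ will follow.

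\textbf{Step 1 (Identification of $K^1$).} In contrast to the weak case, here $\ad_c x_1$ does not annihilate $z_0, z_1$, because $q_{12}q_{21} = -1$ forces the cancellation in \eqref{eq:adx1-zn} to fail. I therefore introduce $f_n = \ad_c x_1(z_n)$ for $n=0,1$ as in \eqref{rem:nichols-mild-relations}, and use $x_1^2 = 0$ together with \eqref{eq:rels-B(V(-1,2))-2} to check that $\ad_c x_1(f_n) = 0$ and that $\ad_c x_2$ sends $\{z_0, z_1, f_0, f_1\}$ into itself. The key computation rewrites $z_2 = \ad_c x_2(z_1)$ inside this four-dimensional span, producing the first identity of \eqref{eq:nichols-mild-relation2}; this is precisely the step where the mild-interaction cyclic phenomenon appears and the factor $\tfrac12$ forces $p>2$.

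\textbf{Step 2 (Braiding of $K^1$).} Using the coaction formula \eqref{eq:coaction-K^1} on the four generators, I would compute the $\ku\Gamma$-degrees and characters of $z_0, z_1, f_0, f_1$. Each lies in a one-dimensional isotypic component and the resulting braided vector space is of diagonal type; a direct calculation (as in Lemma~\ref{le:zcoact}) yields all diagonal entries equal to $-1$ and all $\widetilde{p}_{ij} = 1$, so the generalized Dynkin diagram of $K^1$ is totally disconnected with four vertices labelled $-1$. Hence $\dim \toba(K^1) = 2^4 = 16$.

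\textbf{Step 3 (Relations hold).} I would verify that each relation in \eqref{eq:nichols-mild-relation2} is zero in $\toba(V)$ by checking it is annihilated by the skew-derivations $\partial_1, \partial_2, \partial_3$ of \eqref{eq:skewderivations}; the square-zero relations $z_i^2 = f_i^2 = 0$ are then immediate from Step~2. The block relations \eqref{eq:rels-B(V(-1,2))-1}--\eqref{eq:rels-B(V(-1,2))-3} are inherited from Proposition~\ref{prop:-1block}.

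\textbf{Step 4 (PBW basis and independence).} Let $\widetilde{\cB}$ be the algebra on generators $x_1, x_2, x_3$ modulo all the listed relations. The commutation rules in \eqref{rem:nichols-mild-relations} and \eqref{eq:nichols-mild-relation2} allow one to push every $z_i$ and $f_j$ past every $x_k$, so the monomials in $B$ span $\widetilde{\cB}$; counting gives $\#B = 2\cdot p\cdot 2p\cdot 2^4 = 64p^2$. Combined with the surjection $\widetilde{\cB} \twoheadrightarrow \toba(\cyc_1)$ and the lower bound $\dim \toba(\cyc_1) = \dim K \cdot \dim \toba(\cV(-1,2)) = 64p^2$ from Steps~1--2, the PBW set is linearly independent and $\widetilde{\cB} \simeq \toba(\cyc_1)$.

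The main obstacle is Step~1: tracking the adjoint action precisely enough to (i) confirm $K^1$ closes up in four dimensions, and (ii) extract the specific coefficients of \eqref{eq:nichols-mild-relation2}. The factor $\tfrac12$ in the first identity and the sign $-1$ in the third are delicate and depend essentially on $p>2$; this is the calculation that has no analogue in the weak case and it is where the cyclic structure encoded in the name $\cyc_1$ becomes visible.
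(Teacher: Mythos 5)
Your overall framework (bosonization $\NA(\cyc_1)\simeq K\#\NA(\cV(-1,2))$, identification of $K^1$ as the span of $z_0,z_1,f_0,f_1$, verification of relations by skew-derivations, and the spanning-plus-dimension-count argument for the PBW basis) is the right one and is what the paper implicitly relies on. However, Step 2 contains a genuine error: $K^1$ is \emph{not} of diagonal type, so you cannot obtain $\dim\toba(K^1)=2^4$ by reading off a totally disconnected Dynkin diagram. Concretely, since $a=1$ the realization has $g_2\cdot x_2=q_{21}(x_2+x_1)$, and a direct computation gives $g_2\cdot z_1=q_{21}q_{22}\,(z_1+f_0)$ (compare \eqref{eq:paleblock-8} for the pale-block analogue). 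Thus $g_2$ acts on $\mathrm{span}\{z_1,f_0\}$ by a nontrivial Jordan block and is not diagonalizable on $K^1$. Because $z_0=x_3$ has group-like coaction $\delta(z_0)=g_2\otimes z_0$, the braiding satisfies $c(z_0\otimes v)=(g_2\cdot v)\otimes z_0$ for all $v\in K^1$; if $K^1$ admitted a diagonal basis $(u_i)$ one would have $c(u\otimes u_j)\in\ku u_j\otimes K^1$ for every $u$, forcing $g_2\cdot(-)$ to be diagonal in that basis, a contradiction. This is precisely the point of the mild interaction: the elements $z_n$ alone do not close up, the $f_n$ enter the coaction and action non-semisimply, and the relation $x_2z_1+q_{12}z_1x_2=\tfrac12 f_1+q_{12}f_0x_2$ (equivalently $z_2=\tfrac12 f_1$, the ``cyclic'' phenomenon) has no counterpart in the diagonal-type picture.

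The consequence is that your lower bound $\dim K=16$ in Step 4 is unsupported: it cannot be deduced from the theory of Nichols algebras of diagonal type. The correct route, as in the characteristic-zero treatment of $\cyc_1$ in \cite[\S 4.4]{aah-triang} to which the paper defers, is to establish $\dim\toba(K^1)=16$ directly: compute the coaction \eqref{eq:coaction-K^1} on all four generators (it is triangular, not group-like), prove the squares $z_0^2,z_1^2,f_0^2,f_1^2$ and the commutation relations among $z_i,f_j$ vanish by applying $\partial_1,\partial_2,\partial_3$, and then show linear independence of the $2^4$ monomials in $f_1,f_0,z_1,z_0$ again via derivations. Once that is done, your Steps 1, 3 and 4 assemble correctly into the stated presentation and the dimension $16\cdot 4p^2=64p^2$.
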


\subsection{Realizations}\label{subsec:realizations-block-pt}
Let $H$ be a Hopf algebra, $(g_1, \chi_1, \eta)$ a YD-triple and $(g_2, \chi_2)$
a YD-pair for $H$, see \S \ref{subsec:realizations-block}. 
Let $(V, c)$ be a braided vector space  with braiding 
\eqref{eq:braiding-block-point}.
Then  $\cV_{g_1}(\chi_1, \eta) \oplus  \ku_{g_2}^{\chi_2} \in \ydh$
is a \emph{principal realization} of $(V, c)$  over  $H$ if
\begin{align*}
q_{ij}&= \chi_j(g_i),& &i, j\in \I_{2};& a&= q_{21}^{-1}\eta(g_2).
\end{align*}
Thus $(V, c) \simeq \cV_{g_1}(\chi_1, \eta) \oplus  \ku_{g_2}^{\chi_2}$ 
as braided vector space.
Hence, if $H$ is finite-dimensional and $(V, c)$ is as in Table \ref{tab:toba-finitedim-block-point},
then $\toba \big(\cV_{g_1}(\chi_1, \eta) \oplus  \ku_{g_2}^{\chi_2}\big) \# H$ is a finite-dimensional Hopf algebra.
Examples of finite-dimensional pointed Hopf algebras 
$A = \NA\big(\cV_{g_1}(\chi_1, \eta) \oplus  \ku_{g_2}^{\chi_2}\big) \# \ku \Gamma$ 
like this are listed in Table \ref{tab:hopf-finitedim-block-point}.
In all cases $\Gamma$ is a product of two cyclic groups, $g_1=(1,0)$, $g_2=(0,1)$ and $\chi_j(g_i)=1$ if $i\neq j$; hence it remains to fix the value of $q_{12}$.

\begin{table}[ht]
\caption{{\small Pointed Hopf algebras $K$ from a block and a point}}\label{tab:hopf-finitedim-block-point}
\begin{center}
\begin{tabular}{|c|c|c|c|c|}
\hline $V$ & {\scriptsize diagram}    & $\Gamma$  &  $q_{12}$
& {\small $\dim A$}  \\
\hline
$\lstr( 1, \ghost)$ & $\xymatrix{\boxplus \ar  @{-}[r]^{\ghost}  & \overset{1}{\bullet}}$  
& $\Z/p \times \Z/p$ & $1$ &    $p^{\sa + 5}$
\\\hline
$\lstr( -1, \ghost)$ & $\xymatrix{\boxplus \ar  @{-}[r]^{\ghost}  & \overset{-1}{\bullet}}$& $\Z/p \times \Z/2p$ & $1$  &   $2^{\sa + 2}p^{4}$
\\ \hline
$\lstr(\omega, 1)$& $\xymatrix{\boxplus \ar  @{-}[r]^{1}  & \overset{\omega}{\bullet}}$&
$\Z/p \times \Z/3p$ & $1$ &   $3^4 p^{4}$
\\ \hline
$\lstr_{-}(1, \ghost)$
& $\xymatrix{\boxminus \ar  @{-}[r]^{\ghost}  & \overset{1}{\bullet}}$
& $\Z/2p \times \Z/p$ & $1$
& $2^{\frac{\sa}{2} + 3}p^{\frac{\sa}{2} + 5}$
\\ \hline
$\lstr_{-}(-1, \ghost)$
& $\xymatrix{\boxminus \ar  @{-}[r]^{\ghost}  & \overset{-1}{\bullet}}$
& $\Z/2p \times \Z/2p$ & $\pm 1$
&  $2^{\frac{\sa}{2} + 5}p^{\frac{\sa}{2} + 4}$
\\ \hline
$\cyc_1$&$\xymatrix{\boxminus \ar  @{-}[r]^{(-1, 1)}  &\overset{-1}{\bullet} }$ 
& $\Z/2p \times \Z/2p$ & $\pm 1$
&$256p^4$
\\\hline
\end{tabular}
\end{center}
\end{table}

\section{One block and several points}\label{sec:yd-dim>3}
\subsection{The setting and the main result}\label{subsection:YD>3-setting}

Let $\theta \in \N_{\ge 3}$,  $\I_{2,\theta} = \I_{\theta} - \{1\}$,
$\Iw_\theta = \I_{\theta} \cup \{\fudos\}$. Let $\lfloor i\rfloor$ be the largest integer $\leq i$.
We start from the data
\begin{align*}
(q_{ij})_{i,j \in \I_{\theta}} &\in (\kut)^{\theta \times \theta}, \quad q_{11}^2 = 1;&
(a_2, \dots, a_\theta) \in \ku^{\I_{2,\theta}}.
\end{align*}
We assume that $q_{11}= 1 =: a_1$.
Let $(V, c)$ be the braided vector space of dimension $\theta + 1$,
with a basis $(x_i)_{i\in\Iw_{\theta}}$ and braiding given  by 
\begin{align}\label{eq:braiding-block-several-point}
c(x_i \otimes x_j) &= \begin{cases}
q_{\lfloor i\rfloor j} x_j  \otimes x_i, &i\in \Iw_{\theta},\, j\in \I_{\theta};\\
q_{\lfloor i\rfloor 1} (x_{\fudos} + a_{\lfloor i\rfloor} x_1) \otimes x_{i}, & i\in \Iw_{\theta},\, j =\fudos.
\end{cases}
\end{align}

We say that the block and the points have \emph{discrete ghost} if $a_j \in \Fp^{\I_{2,\theta}}$, $(a_j)\neq 0$.
When this is the case, we pick the representative $\sa_j \in \Z$ of $2a_j$ by imposing
$\sa_j \in \{1- p, \dots, -1,0 \}$, 
and set $\ghost_j =-\sa_j$.
The \emph{ghost} between the block and the points is the  vector $\ghost = (\ghost_{j})_{j \in \I_{2,\theta}}$ given by
\begin{align}
\ghost = -(\sa_{j})_{j \in \I_{2,\theta}} \in \N_0^{\I_{2,\theta}}.
\end{align}

\medbreak

The braided subspace $V_1$ spanned by $x_1, x_{\fudos}$ is $\simeq \cV(1, 2)$, while
$V_{\diag}$ spanned by $(x_i)_{i\in\I_{2,\theta}}$ is of diagonal type. Obviously,
\begin{align}\label{eq:v=v1+v2}
V &= V_1 \oplus V_{\diag}.
\end{align}
Let $\X$ be the set of connected components of the Dynkin diagram of the matrix $\bq = (q_{ij})_{i, j \in \I_{2, \theta}}$.
If $J\in \X$, then we set  $J' = \I_{2, \theta} - J$,
\begin{align*}
V_J &= \sum_{j \in J} \ku_{g_j}^{\chi_j},&
\ghost_J &= (\ghost_{j})_{j \in J}.
\end{align*}

We shall use the results and notations from the preceding Sections, but with $\fudos$ replacing 2 when appropriate,
e. g. $x_{\fudos 1} = x_{\fudos}x_1 - x_1 x_{\fudos}$.
Let
\begin{align*}
K &=\NA (V)^{\mathrm{co}\,\NA (V_1)} \text{ and }&  K^1 &= \ad_c\NA (V_1) (V_{\diag})
\in {}^{\NA (V_1)\# \ku \Gamma}_{\NA (V_1)\# \ku \Gamma}\mathcal{YD},
\\
&\text{so that}& \NA(V) &\simeq K \# \NA (V_1), \quad K \simeq \NA(K^1) 
\end{align*}
Let
\begin{align}\label{eq:zjn}
z_{j,n} &:= (ad_c x_{\fudos})^n x_j,& j&\in\I_{2, \theta},& n&\in\N_0.
\end{align}

For all $i,j\in \I_{2,\theta}$, $n\in \N_0$, we have as in \cite[\S 5.2.1]{aah-triang} that
\begin{align}
\label{eq:1block+points-action}
g_1\cdot z_{j,n} &= q_{1j}z_{j,n},& &\text{(by Lemma \ref{le:-1bpz})}
\\\label{eq:1block+points-bis}
g_i\cdot z_{j,n} &= q_{i1}^nq_{ij}z_{j,n}, &&
\end{align}

\begin{lemma}\label{lemma:braiding-K-weak-block-points} 
The  braided vector space $K^1$ is of diagonal type in the basis
\begin{align}\label{eq:block-points-baseK1}
(z_{j,n})_{j\in \I_{2,\theta}, 0\le n\le  \ghost_j}
\end{align}
with braiding matrix
\begin{align*}
(\bp_{im, jn})_{\substack{i,j \in \I_{2,\theta}, \\ 0\le m\le  \ghost_i, \,
0\le n \le \ghost_j }} &= (q_{i1}^nq_{1j}^m  q_{ij})_{\substack{ i,j \in \I_{2,\theta}, \\ 0\le m\le  \ghost_i, \, 
0\le n \le \ghost_j }}.
\end{align*}
Hence, the corresponding generalized  Dynkin diagram  has labels
\begin{align*}
\bp_{im,im}&=  q_{ii},& \bp_{im, jn}\bp_{jn, im}&= q_{ij}q_{ji}, & &(i,m)\neq (j,n).
\end{align*}
\end{lemma}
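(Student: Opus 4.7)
The plan is to reduce the calculation to the one-block-one-point case treated in Section \ref{sec:yd-dim3}. Since the adjoint action of $\NA(V_1)$ preserves the $\I_{2,\theta}$-grading inherited from $V_{\diag}$, we have a decomposition
\begin{align*}
K^1 = \bigoplus_{j \in \I_{2,\theta}} K^1_j, \qquad K^1_j := \ad_c \NA(V_1)(x_j).
\end{align*}
For each $j$, the braided subspace spanned by $x_1$, $x_{\fudos}$, $x_j$ carries a braiding of the form \eqref{eq:braiding-block-point} (with $\epsilon = 1$ and parameters $q_{1j}, q_{j1}, q_{jj}, a_j$), so Lemma \ref{lemma:K-basis} applies verbatim and yields the basis $(z_{j,n})_{0 \le n \le \ghost_j}$ of $K^1_j$; assembling these produces the claimed basis \eqref{eq:block-points-baseK1} of $K^1$.

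For the braiding I would next determine the $\NA(V_1)\#\ku\Gamma$-coaction on $z_{j,n}$. Running the inductive computation of Lemma \ref{le:zcoact} inside the sub-Hopf algebra generated by $x_1, x_{\fudos}, x_j$ gives
\begin{align*}
\delta(z_{j,n}) = \sum_{k=0}^{n} \nu_{k,n}\, x_1^{n-k} g_1^k g_j \otimes z_{j,k},
\end{align*}
with $\nu_{n,n} = 1$ and every summand of index $k < n$ carrying a positive power of $x_1$ in the first tensorand. The decisive observation is that $\ad_c x_1(z_{j,n}) = 0$, by the same calculation as \eqref{eq:adx1-zn}, which only uses Lemma \ref{le:-1bpz} and the scalar action $g_1 \cdot z_{j,n} = q_{1j} z_{j,n}$ from \eqref{eq:1block+points-action}. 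Consequently, in
\begin{align*}
c(z_{i,m} \otimes z_{j,n}) = \sum_{k=0}^{m} \nu_{k,m}\, (x_1^{m-k} g_1^k g_i) \cdot z_{j,n} \otimes z_{i,k},
\end{align*}
every term with $k < m$ dies because $x_1$ acts as zero on $z_{j,n}$ in the braided adjoint sense; the surviving $k=m$ contribution evaluates via \eqref{eq:1block+points-action}--\eqref{eq:1block+points-bis} to $q_{1j}^m q_{i1}^n q_{ij}\, z_{j,n} \otimes z_{i,m}$, proving $\bp_{im,jn} = q_{i1}^n q_{1j}^m q_{ij}$.

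The stated Dynkin diagram labels $\bp_{im,im} = q_{ii}$ and $\bp_{im,jn}\bp_{jn,im} = q_{ij}q_{ji}$ then collapse from the general formula via the weak-interaction convention $q_{1i}q_{i1} = 1$ tacitly in force in this section. The principal obstacle is setting up the coaction formula rigorously: one has to verify inductively that $\Delta$ applied to $(\ad_c x_{\fudos})^n x_j$ contributes only first tensorands of the form $x_1^{n-k} g_1^k g_j$ after projection to $\NA(V_1)\#\ku\Gamma \otimes K^1$, which requires careful bookkeeping of the coefficients $\nu_{k,n}$ using the commutation relations in $\NA(V_1)$ recorded in Lemma \ref{le:-1bpz}. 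Once this is in place, the diagonalization of $K^1$ is immediate from the vanishing of $\ad_c x_1(z_{j,n})$.
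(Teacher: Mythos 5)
Your argument is correct and is essentially the paper's: the paper proves this lemma by citing \cite[Lemma 7.2.5]{aah-triang} with the remark that the relevant combinatorial numbers do not vanish in characteristic $p$, and what you have written is precisely that argument spelled out --- reduction to the rank-three subspaces $\langle x_1,x_{\fudos},x_j\rangle$ to get the basis via Lemma \ref{lemma:K-basis}, the coaction formula with only $x_1$-powers in the lower terms, and the vanishing $\ad_c x_1(z_{j,n})=0$ to kill everything but the group-like leading term. Your observation that the nonvanishing of the coefficients $\nu_{k,n}$ (equivalently of the $\mu_n$, guaranteed by the ghost conventions) is the only point where $p>0$ intervenes is exactly the content of the paper's one-line justification.
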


\pf The proof in \cite[Lemma 7.2.5]{aah-triang} applies as the combinatorial numbers appearing there are not zero.
\epf

Let $K_J$ be the braided vector subspace of $K^1$ spanned by $(z_{j,n})_{\substack{j\in J,\\ 0\le n\le  \ghost_j}}$.

\begin{coro}\label{cor:conncomp}
The braided subspaces corresponding to the connected components of the Dynkin diagram of $K^1$ are  $K_J$, $J \in \X$. Hence
\begin{align}\label{eq:points-block-gkd-K}
\dim K = \dim \toba(K^1) &=  \prod_{J\in \X} \dim \toba(K_J). \qed
\end{align}
\end{coro}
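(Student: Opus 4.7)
\emph{Plan.} The proof will be a short deduction from Lemma \ref{lemma:braiding-K-weak-block-points} combined with the definition of $\X$. The key observation is that in the diagonal braiding of $K^1$, the off-diagonal edge labels $\bp_{im,jn}\bp_{jn,im}=q_{ij}q_{ji}$ depend only on the indices $i,j\in\I_{2,\theta}$ of the underlying points, not on the block-multiplicities $m,n$. Hence the adjacency relation in the Dynkin diagram of $K^1$ is pulled back from the adjacency relation in the Dynkin diagram of $\bq=(q_{ij})_{i,j\in\I_{2,\theta}}$.

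First I would argue that the Dynkin diagram of $K^1$ decomposes along $\X$. By definition of $\X$, for $J\neq J'$ in $\X$ and $i\in J$, $j\in J'$, we have $q_{ij}q_{ji}=1$. Combined with Lemma \ref{lemma:braiding-K-weak-block-points}, this means there is no edge in the Dynkin diagram of $K^1$ between any vertex $(i,m)$ with $i\in J$ and any vertex $(j,n)$ with $j\in J'$. Consequently the vertex set of the Dynkin diagram of $K^1$ partitions into the disjoint union of its restrictions to each $K_J$, and every connected component of the diagram is contained entirely within some $K_J$. This establishes the first assertion: the braided subspaces $K_J$, $J\in\X$, are exactly the supports of the connected components (grouped by the underlying point-index).

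For the dimension formula, I would invoke the standard factorization of Nichols algebras of diagonal type over disconnected Dynkin diagrams: if a diagonal braided vector space splits as $K^1=\bigoplus_{J\in\X}K_J$ with trivial cross-braiding squared (i.e.\ $\bp_{im,jn}\bp_{jn,im}=1$ whenever $i,j$ belong to different $J$'s), then
\[
\toba(K^1)\;=\;\bigotimes_{J\in\X}\toba(K_J)
\]
as braided vector spaces (in particular as graded vector spaces). Taking dimensions then gives \eqref{eq:points-block-gkd-K}. The only modest point to note is that the cross-braiding here is diagonal with $c^{2}=\id$ on each cross term (by the computation above), which is exactly the hypothesis needed to invoke the tensor-product factorization; there is no serious obstacle, since this is a well-known fact in the diagonal-type theory used throughout the paper.
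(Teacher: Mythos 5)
Your proposal is correct and is essentially the argument the paper intends: the corollary is stated with an immediate \qed precisely because, by Lemma \ref{lemma:braiding-K-weak-block-points}, the edge labels $\bp_{im,jn}\bp_{jn,im}=q_{ij}q_{ji}$ equal $1$ whenever $i,j$ lie in different components of $\X$, and the standard tensor factorization of Nichols algebras over orthogonal summands of a diagonal braided vector space (the same fact used in \eqref{eq:block-point-disconnected}) yields \eqref{eq:points-block-gkd-K}. Your parenthetical caveat that each $K_J$ may itself be a union of several connected components (e.g.\ when $q_{ii}^2=1$) is apt but harmless, since only the pairwise orthogonality of the $K_J$ is needed for the dimension formula.
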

Observe that if $\ghost_J = 0$, then $K_J = V_J$.

\smallbreak
In this Section, we shall prove part \ref{item:block-points} of Theorem \ref{thm:main-intro}.

\begin{theorem}\label{thm:points-block-eps1} Let $V$ be a braided vector space with braiding \eqref{eq:braiding-block-several-point}. 
Assume that for every $J \in \X$, either  $\ghost_J = 0$, or else $\dim V_J = 1$ 
and $V_1 \oplus V_j$ is as in Table \ref{tab:toba-finitedim-block-point}, or else
$V_J$ is  as in Table \ref{tab:toba-finitedim-block-points}. Then 
\begin{align}\label{eq:points-block-gkd}
\dim \toba(V) &= p^2 \prod_{J\in \X} \dim \toba(K_J) < \infty.
\end{align}
\end{theorem}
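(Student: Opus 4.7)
The plan is to combine the coinvariant decomposition set up in \S\ref{subsection:YD>3-setting} with the multiplicativity of Corollary~\ref{cor:conncomp}, reducing the statement to a case-by-case verification for each connected component $J\in\X$.

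First, by \cite[Proposition 8.6]{HS} one has $\NA(V) \simeq K \# \NA(V_1)$; since $V_1 \simeq \cV(1,2)$ is the Jordan plane, Lemma~\ref{prop:1block} gives $\dim \NA(V_1) = p^2$. Combined with Corollary~\ref{cor:conncomp}, this already yields
\begin{equation*}
\dim \toba(V) \;=\; p^2 \prod_{J\in\X} \dim \toba(K_J),
\end{equation*}
so the theorem reduces to proving $\dim \toba(K_J) < \infty$ for each $J$.

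Next, I would carry out the case analysis dictated by the hypotheses. Case (i): if $\ghost_J = 0$, then $K_J = V_J$ by the remark following Corollary~\ref{cor:conncomp}; this is diagonal of type arising as a connected subdiagram in (the diagonal part of) the configurations in Table~\ref{tab:toba-finitedim-block-points} or is reduced to a single point, so finite-dimensionality follows from \cite{H-classif} together with the positive-characteristic extensions \cite{HW,W}. Case (ii): if $\dim V_J = 1$ and $V_1 \oplus V_j$ appears in Table~\ref{tab:toba-finitedim-block-point}, then finiteness of $\dim \toba(K_J)$ is built into Theorem~\ref{thm:point-block} applied with $\epsilon = 1$; indeed its proof precisely verifies that the one-point component of $K^1$ has finite-dimensional Nichols algebra in every row of that Table. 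Case (iii): if $V_J$ appears in Table~\ref{tab:toba-finitedim-block-points}, the braiding matrix $(\bp_{im,jn})$ of $K_J$ is made explicit by Lemma~\ref{lemma:braiding-K-weak-block-points}, and the generalized Dynkin diagram that results turns out in each row to be of a known finite-type shape (for example, for $\lstr(A_{\theta-1})$ one expects a chain, and for the superalgebra-type entries $\lstr(A(1|0)_k;\omega)$ a short diagram of super type at a third root of unity), with dimension calculable by a direct PBW argument.

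The main obstacle is case (iii): for each entry one must (a) write down $(\bp_{im,jn})$ using Lemma~\ref{lemma:braiding-K-weak-block-points} in terms of the original $q_{ij}$ and the ghost data, (b) identify the resulting diagram as one of the finite-type diagonal diagrams available in characteristic $p$, and (c) produce a presentation by generators and relations together with a PBW basis of $\toba(K_J)$ in order to extract the stated dimension. This is exactly the substance of the subsequent subsections \S\ref{subsubsection:lstr-a-n}--\S\ref{subsubsection:lstr-D(21)}; once each $\dim \toba(K_J)$ is pinned down, the product formula above gives the dimensions displayed in the last column of Table~\ref{tab:toba-finitedim-block-points} and completes the proof.
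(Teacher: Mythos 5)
Your reduction is sound and coincides with the paper's: the product formula $\dim \toba(V) = p^2 \prod_{J\in \X}\dim\toba(K_J)$ follows from $\NA(V)\simeq K\#\NA(V_1)$ with $\dim\NA(V_1)=p^2$ (Lemma~\ref{prop:1block}, since $q_{11}=1$ here) together with Corollary~\ref{cor:conncomp}, and your cases (i) and (ii) are handled exactly as in the paper (via the braided tensor product decomposition when $\ghost_J=0$, and via Theorem~\ref{thm:point-block} when $J$ is a single point).

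The genuine gap is case (iii), which you correctly identify as ``the main obstacle'' but then defer entirely: you never actually compute the Dynkin diagrams of the components $K_J$ from Lemma~\ref{lemma:braiding-K-weak-block-points}, never match them against the finite-type lists, and never extract the dimensions. That computation \emph{is} the proof of the theorem in the paper --- it is carried out inside the proof of Theorem~\ref{thm:points-block-eps1} itself (identifying $K_J$ as Cartan type $A_3$, $D_{j-1}$, $D_4$, or as rows 8, 15, 18 of the tables in \cite{H-classif} with root systems of $\mathfrak{sl}(2\vert 2)$, $\mathfrak g(2,3)$, $\mathfrak g(3,3)$, with dimensions read off from \cite{AA17}), not merely in the presentation subsections \S\ref{subsubsection:lstr-a-n}--\S\ref{subsubsection:lstr-D(21)}. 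Moreover, the one concrete prediction you venture is wrong: for $\lstr(A_{\theta-1})$ with $\theta>3$ the component $K_J$ is \emph{not} a chain. Since $\bp_{j0,j1}\bp_{j1,j0}=q_{jj}^2=1$, the two vertices $z_{j,0}$ and $z_{j,1}$ coming from the ghost point are disconnected from each other while both attach to the adjacent point, producing a fork; $K_J$ is of Cartan type $D_{\theta-1}$ (only for $\theta=3$ does one get the chain $A_3$). This error illustrates why the case-by-case verification cannot be waved through: one must check in each row that the resulting diagram, which is generally not of the same shape as the original one, still lands in the finite list, and this requires the explicit matrix $(\bp_{im,jn})$ in every case.
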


\pf 
By Corollary \ref{cor:conncomp} we reduce to connected components in $\X$.
If $J\in \X$ has weak interaction and $\ghost_J = 0$, then
$\NA(V) \simeq \NA(V_1 \oplus V_{J'})  \, \underline{\otimes} \, \NA(V_J)$,
hence $\dim \NA(V) = \dim\NA(V_1 \oplus V_{J'})  \dim \NA(V_J)$.

If $J\in \X$ is a point, 
then Theorem \ref{thm:point-block} applies. 
We need to analyze those $J$ with  $\vert J \vert  \geq 2$ and $\ghost_J \neq 0$.

Below we denote $\imath\, = \sqrt{-1}$.

\medbreak \noindent $\lstr(A_{2})$, $\ghost_J = (1,0)$: Here $K_J$ is of Cartan type   $A_3$ 
and $\dim \toba(K_J) = 2^{6}$.

\medbreak \noindent $\lstr(A_{j})$, $j> 2$, $\ghost_J = (1,0)$: Here $K_J$ is of Cartan type   $D_{j-1}$ 
and $\dim \toba(K_J) = 2^{j(j+1)}$.

\medbreak \noindent $\lstr(A_{2}, 2)$: Here $K_J$ is of Cartan type   $D_4$ 
and $\dim \toba(K_J) = 2^{12}$.

\medbreak \noindent $\lstr(A(1\vert 0)_2; \omega)$, provided that $p >3$: Here
$K_J$ is of diagonal type in the basis $1 = z_{i,0}$, $2 = z_{i,1}$, $3 = z_{j,0}$ with diagram 
\begin{align*}
\xymatrix{\overset{-1}{\underset{2}{\circ}} \ar  @{-}[rd]^{\omega} &
\\ \overset{-1}{\underset{1}{\circ}} \ar  @{-}[r]^{\omega}  &
\overset{-1}{\underset{3}{\circ}}}
\end{align*}
Then $\dim \toba (K_J) < \infty$ by  \cite[Table 2, row 15]{H-classif};
it has the same root system as $\mathfrak g(2,3)$ and dimension $2^73^4$, see \cite[8.3.4]{AA17}.

\medbreak \noindent $\lstr(A(1\vert 0)_1; \omega)$ and $\lstr(A(1\vert 0)_3; \omega)$, provided that $p >3$: 
Here $K_J$ is of diagonal type in the basis  $1 = z_{i,0}$, $2 = z_{j,1}$, $3 = z_{j,0}$, respectively 
$1 = z_{i,0}$, $2 = z_{i,1}$, $3 = z_{j,0}$, and its diagram is
\begin{align*}
& \xymatrix{ &  \overset{-1}{\underset{2}{\circ}} \ar  @{-}[ld]_{\omega^{2}}
\\
 \overset{\omega}{\underset{1}{\circ}} \ar  @{-}[r]^{\omega^{2}}  & \overset{-1}{\underset{3}{\circ}}
}
&&\xymatrix{\overset{\omega}{\underset{2}{\circ}}\ar  @{-}[d]_{\omega^{2}} \ar  @{-}[rd]^{\omega^2} &
\\ \overset{\omega}{\underset{1}{\circ}} \ar  @{-}[r]^{\omega^2}  &
\overset{-1}{\underset{3}{\circ}}}
\end{align*}
Then $\dim \toba (K_J) < \infty$ by  \cite[Table 2, row 8]{H-classif}, respectively \cite[Table 2, row 15]{H-classif}.
In the first case it has the same root system as $\mathfrak{sl}(2\vert 2)$ and dimension $2^43^2$, see \cite[5.1.8]{AA17}.
In the second case it has the same root system as $\mathfrak g(2,3)$ and dimension $2^73^4$, see \cite[8.3.4]{AA17}.

\medbreak \noindent R$\lstr(A(1\vert 0)_1; r)$: analogous to $\lstr(A(1\vert 0)_1; \omega)$; same root system as $\mathfrak{sl}(2\vert 2)$ and dimension $2^4N^2$, see \cite[5.1.8]{AA17}.

\medbreak \noindent $\lstr(A(2 \vert 0)_1; \omega)$ and $\lstr(D(2 \vert 1); \omega)$, $p >3$: In both cases, 
$\dim \toba (K_J) = 2^83^9$; it has the same root system as $\mathfrak g(3,3)$, see \cite[Table 3, row 18]{H-classif}, \cite[8.4.5]{AA17}.
\epf

\subsection{The presentation of the Nichols algebras}\label{subsection:points-block-presentation}
We give defining relations and an explicit PBW basis of $\toba(V)$, for all $V$ as in
Theorem \ref{thm:points-block-eps1},   
assuming that the Dynkin diagram of $V_{\diag}$ is connected, i.e. $V_{\diag} = V_J$, where $J = \I_{2, \theta}$.
Essentially the relations are the same as in \cite{aah-triang} up to adding the suitable $p$-powers;
we omit the  proofs as they are minor variations of those in \emph{loc. cit.}
The passage from connected $V_{\diag}$ to the general case is standard, just  add  the quantum commutators between points in different components.
Since the case $\vert J\vert =  1$ was treated in \S \ref{sec:yd-dim3}, we also suppose  that $\vert J\vert > 1$.
These braided vector spaces have names given in \cite{aah-triang}, see Table \ref{tab:toba-finitedim-block-points}.
The braided vector subspace $V_1\oplus \ku x_2$ of such $V$ is of type
\begin{itemize}
  \item $\lstr(-1, 2)$ when $V$ is of type $\lstr(A_{2}, 2)$,
  \item $\lstr(\omega, 1)$ when $V$ is of type $\lstr(A(1\vert 0)_3; \omega)$, or
  \item $\lstr(-1, 1)$ for all the other cases.
\end{itemize}
Thus the subalgebra generated by $V_1\oplus \ku x_2$ is a Nichols algebra.
We recall its relations up to the change of index with respect to \S  \ref{sec:yd-dim3}; the $2$ and $3$ there are now $\fudos$ and $2$.
As in \eqref{eq:xijk}, we set   $x_{i_1i_2 \dots i_M} = \ad_c x_{i_1}\, x_{i_2 \dots i_M}$.
Also, we have now $z_n = (\ad_c x_{\fudos})^n x_2$, $n\in\N_0$.

\smallbreak
First, the defining relations of $\toba(\lstr(-1, 1))$ are  
\begin{align}
\label{eq:rels B(V(1,2))-bis}  &x_{\fudos}x_1-x_1x_{\fudos}+\frac{1}{2}x_1^2,
\\
\tag{\ref{eq:rels B(V(1,2))-x1p}}
& x_{1}^p, 
\\\label{eq:rels B(V(1,2))-x2p-bis}
& x_{\fudos}^p, 
\\
\label{eq:lstr-rels&11disc-1-bis} & x_1x_2-q_{12} \, x_2x_1,
\\
\label{eq:lstr-rels&11disc-qserre-g=1} & (\ad_c x_{\fudos})^2 x_2,
\\
\label{eq:lstr-rels&1-1disc-g=1} & x_2^2, \, x_{\fudos 2}^2.
\end{align}
Second, the defining relations of $\toba(\lstr(-1, 2))$ are \eqref{eq:rels B(V(1,2))-bis},
\eqref{eq:rels B(V(1,2))-x1p}, \eqref{eq:rels B(V(1,2))-x2p-bis}, \eqref{eq:lstr-rels&11disc-1-bis}, \eqref{eq:lstr-rels&1-1disc-g=1} and 
\begin{align}
\label{eq:lstr-rels&11disc-qserre-g=2} & (\ad_c x_{\fudos})^3 x_2,
\\
\label{eq:lstr-rels&1-1disc-g=2} & x_{\fudos\fudos 2}^2.
\end{align}
Third, the defining relations of $\toba(\lstr(\omega, 1))$ are \eqref{eq:rels B(V(1,2))-bis},
\eqref{eq:rels B(V(1,2))-x1p}, \eqref{eq:rels B(V(1,2))-x2p-bis}, 
\eqref{eq:lstr-rels&11disc-1-bis}, \eqref{eq:lstr-rels&11disc-qserre-g=1} and
\begin{align}
\label{eq:lstr1omega1-z0cube-bis} &  x_2^3, 
\\ \label{eq:lstr-rels&1omega1-bis}
& x_{\fudos 2}^3,
\\
\label{eq:lstr-rels&1omega13-bis}  &[x_{\fudos 2}, x_2]_c^3.
\end{align}

We also observe that, since $q_{1j}q_{j1}=1$ and $\ghost_j=0$, we have
\begin{align}\label{eq:lstr-rels-ghost-int-trivial}
x_1x_j &= q_{1j} x_jx_1, & x_{\fudos} x_j &= q_{1j} x_j x_{\fudos},& j &\in \I_{3, \theta}.
\end{align}

\subsubsection{The Nichols algebra $\toba(\lstr(A(1\vert 0)_1; r))$, $r\in\G_N'$, $N\geq 3$} \label{subsubsection:lstr-a(10)1}
Let
\begin{align}\label{eq:lstr-def-ztt-1}
\ztt_{12}&=x_{\fudos 23}, & \ztt_{123}&=[x_{\fudos 2}, x_{23}]_c.
\end{align}

\begin{prop} \label{pr:lstr-a(10)1}
The algebra $\toba(\lstr(A(1\vert 0)_1; r))$ is presented by generators $x_i$, $i\in \Iw_3$, and relations
\eqref{eq:rels B(V(1,2))-bis}, \eqref{eq:rels B(V(1,2))-x1p}, \eqref{eq:rels B(V(1,2))-x2p-bis},
 \eqref{eq:lstr-rels&11disc-1-bis}, \eqref{eq:lstr-rels&11disc-qserre-g=1}, \eqref{eq:lstr-rels&1-1disc-g=1}
\eqref{eq:lstr-rels-ghost-int-trivial}, and
\begin{align}
\label{eq:lstr-rels-a10-diagpart}
(\ad_c x_3)^2 x_2 &=0, & x_3^N &= 0,
\\
\label{eq:lstr-rels-a10-1}
\ztt_{123}^N&=0,
\end{align}
The set
\begin{multline*}
B =\big\{ x_1^{m_1} x_{\fudos}^{m_2} x_{\fudos 2}^{n_{1}}  \ztt_{12}^{n_{2}}\ztt_{123}^{n_{3}}x_{3}^{n_{4}}x_{23}^{n_{5}}x_{2}^{n_{6}} : \\
0 \le n_{1}, n_{2}, n_{5}, n_{6} < 2, \,  0\le n_{3}, n_{4} < N, \, 0 \le m_1, m_2 < p \big\}
\end{multline*}
is a basis of $\toba(\lstr(A(1\vert 0)_1; r))$ and $\dim \toba(\lstr(A(1\vert 0)_1; r)) = p^2 2^4 N^2$. 
\qed
\end{prop}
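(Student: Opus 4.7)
The proof proposal follows the three-stage pattern already used throughout Section \ref{sec:yd-dim>3}: verify relations, establish spanning, and count dimensions via the decomposition $\NA(V)\simeq K\# \NA(V_1)$.

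First, I would check that all listed relations vanish in $\toba(V)$. The relations \eqref{eq:rels B(V(1,2))-bis}--\eqref{eq:lstr-rels&1-1disc-g=1} are exactly the defining relations of $\toba(\lstr(-1,1))$, already established in \S \ref{subsubsection:lstr-1-1disc} for the subalgebra generated by $x_1,x_{\fudos},x_2$ (since $V_1\oplus\Bbbk x_2$ is of type $\lstr(-1,1)$, as $\ghost_2=1$, $q_{22}=-1$, and interaction is weak). The commutations \eqref{eq:lstr-rels-ghost-int-trivial} follow from $\ghost_3=0$ and $q_{13}q_{31}=1$, as in the general discussion preceding Theorem \ref{thm:points-block-eps1}. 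The relations in \eqref{eq:lstr-rels-a10-diagpart} belong to the diagonal type-$A_2$ subdiagram on $(x_2,x_3)$ with $q_{22}=-1$, $q_{33}=r$, $\widetilde{q}_{23}=r^{-1}$, and are standard. The only genuinely new relation is $\ztt_{123}^N=0$. I would verify this via the criterion $\partial_i(\ztt_{123}^N)=0$ for all $i\in\Iw_3$: using the formulas of Lemma~\ref{le:-1bpz} and \eqref{eq:skewderivations}, and the fact that $\ztt_{123}=[x_{\fudos 2},x_{23}]_c$ projects in $K^1$ to a root vector whose self-braiding equals $q_{22}^2\,q_{33}\,(q_{23}q_{32})^2=r$, a primitive $N$-th root of unity, so the verification reduces to a diagonal-type $q$-binomial cancellation.

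Second, I would show $\cBt$ (the quotient of $T(V)$ by the listed relations) is spanned by $B$. The strategy is to rewrite arbitrary monomials into the PBW order by repeatedly applying the $q$-commutations: the block generators $x_1,x_{\fudos}$ pass to the left of all $x_j$, $j\in\I_{2,3}$, using \eqref{eq:lstr-rels-ghost-int-trivial} and \eqref{eq:lstr-rels&11disc-1-bis}; within the $K$-part one establishes the quantum commutations among $x_{\fudos 2},\ztt_{12},\ztt_{123},x_3,x_{23},x_2$ as consequences of the relations (exactly as in the characteristic-zero argument of \cite[\S 7]{aah-triang}). The exponent bounds in $B$ come from: \eqref{eq:rels B(V(1,2))-x1p}--\eqref{eq:rels B(V(1,2))-x2p-bis} for $m_1,m_2<p$; \eqref{eq:lstr-rels-a10-diagpart}, \eqref{eq:lstr-rels-a10-1} for $n_3,n_4<N$; and the squaring relations \eqref{eq:lstr-rels&1-1disc-g=1} together with $(\ad_c x_3)^2x_2=0$ (which implies $x_{23}^2=0$) for $n_1,n_2,n_5,n_6<2$.

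Third, I would establish linear independence of $B$ in $\toba(V)$ by a dimension count. By \cite[Proposition 8.6]{HS} one has $\NA(V)\simeq K\#\NA(V_1)$, with $\dim\NA(V_1)=p^2$ by Lemma~\ref{prop:1block}. In the proof of Theorem~\ref{thm:points-block-eps1}, the braided vector space $K^1$ is identified as being of diagonal type with the Dynkin diagram described there, which has the same arithmetic root system as $\mathfrak{sl}(2\vert 2)$; consequently $\dim K=2^4N^2$. Therefore $\dim \toba(V)=p^2\cdot 2^4 N^2=\vert B\vert$, and since $\cBt$ surjects onto $\toba(V)$ and is spanned by $B$, the set $B$ is a basis.

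The main obstacle is the identification in step one and step two: recognizing that the six monomials $x_{\fudos 2},\ztt_{12},\ztt_{123},x_3,x_{23},x_2$ in $\toba(V)$ correspond under $\NA(V)\simeq K\#\NA(V_1)$ to the six PBW generators of the diagonal Nichols algebra $K$, and in particular that the image of $\ztt_{123}$ is the Lusztig-type root vector whose $N$-th power vanishes. This matches the bookkeeping carried out in \cite{aah-triang} for the characteristic-zero analogue and should transfer without modification once the $q$-binomial coefficients appearing are nonzero, which holds because the relevant indices are less than $p$.
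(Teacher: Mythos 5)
Your outline is correct and takes essentially the same route as the paper, which omits the proof of this Proposition by deferring to the characteristic-zero argument of \cite{aah-triang}: one verifies the relations via the skew-derivations and the identification of $K^1$ as diagonal of $\mathfrak{sl}(2\vert 2)$-type, establishes spanning by PBW rewriting, and matches dimensions through $\NA(V)\simeq K\#\NA(V_1)$ with $\dim K=2^4N^2$ and $\dim \NA(V_1)=p^2$, exactly as in the proved cases of Sections \ref{sec:yd-dim3} and \ref{sec:yd-dim>3}. The only slip is immaterial: the self-braiding of $\ztt_{123}$, whose degree is $\alpha_1+2\alpha_2+\alpha_3$, equals $q_{33}(q_{23}q_{32})^2=r^{-1}$ rather than $r$, but this is still a primitive $N$-th root of unity, so $\ztt_{123}^N=0$ follows all the same.
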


\subsubsection{The Nichols algebra $\toba(\lstr(A(1\vert 0)_2; \omega))$} \label{subsubsection:lstr-a(10)2}

\begin{prop} \label{pr:lstr-a(10)2}
The algebra $\toba(\lstr(A(1\vert 0)_2; \omega))$ is presented by generators $x_i$, $i\in \Iw_3$, and relations
\eqref{eq:rels B(V(1,2))-bis}, \eqref{eq:rels B(V(1,2))-x1p}, \eqref{eq:rels B(V(1,2))-x2p-bis},
\eqref{eq:lstr-rels&11disc-1-bis}, \eqref{eq:lstr-rels&11disc-qserre-g=1}, \eqref{eq:lstr-rels&1-1disc-g=1},
\eqref{eq:lstr-rels-ghost-int-trivial}, and
\begin{align}\label{eq:lstr-rels-a10-2-diagpart}
x_2^2 &=0, & x_3^2 &= 0, & x_{23}^3 &= 0,
\\
\label{eq:lstr-rels-a10-2}
\ztt_{12}^3&=0, & \ztt_{123}^6&=0, & [\ztt_{123},x_3]_c^3&=0,
\end{align}
The set
\begin{multline*}
B =\big\{ x_1^{m_1} x_{\fudos}^{m_2} x_{\fudos 2}^{n_1} \ztt_{12}^{n_2} [\ztt_{12},[\ztt_{12},\ztt_{123}]_c]_c^{n_3}
[\ztt_{12},\ztt_{123}]_c^{n_4} \ztt_{123}^{n_5}  \\
[\ztt_{123},x_3]_c^{n_6} [\ztt_{123},x_{32}]_c^{n_7} x_3^{n_8} x_{32}^{n_9} x_2^{n_{10}}:  0 \le m_1, m_2,< p, \\
0\le n_2,n_6,n_9<3, \, 0\le n_5<6, \,0\le n_1,n_3,n_4,n_7,n_8,n_{10}<2 \big\}
\end{multline*}
is a basis of $\toba(\lstr(A(1\vert 0)_2; \omega))$ and $\dim \toba(\lstr(A(1\vert 0)_2; \omega)) = p^22^73^4$.
\qed
\end{prop}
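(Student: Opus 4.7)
The plan is to adapt the strategy used to establish Proposition \ref{pr:lstr-a(10)1} and the analogous presentations in \cite{aah-triang}. Let $\widetilde{\cB}$ denote the algebra in $\ydG$ presented by the generators $x_1,x_{\fudos},x_2,x_3$ and the relations listed in the statement. Since all relations hold (to be checked in Step 1 below) and $\toba(V)$ is the quotient of $T(V)$ by a maximal graded Hopf ideal not meeting $V$, there is a canonical surjection $\widetilde{\cB}\twoheadrightarrow \toba(V)$. I will show $\widetilde{\cB}$ is spanned by the proposed PBW set $B$, and then compare cardinalities. One checks directly that $|B| = p^2\cdot 2^{6}\cdot 3^{3}\cdot 6 = p^{2} 2^{7} 3^{4}$, which is exactly the value of $\dim \toba(V)$ given by Theorem \ref{thm:points-block-eps1} (with $K_J$ of type $\mathfrak g(2,3)$, cf.\ \cite[Table 2, row 15]{H-classif} and \cite[8.3.4]{AA17}). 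Hence both the spanning claim and surjectivity force $B$ to be a basis and $\widetilde{\cB}\simeq \toba(V)$.

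\emph{Step 1 (Relations).} The relations split into four groups. The Jordan-plane relations \eqref{eq:rels B(V(1,2))-bis}, \eqref{eq:rels B(V(1,2))-x1p}, \eqref{eq:rels B(V(1,2))-x2p-bis} hold in $\toba(V_1)$ by Lemma \ref{prop:1block}. The interaction relations \eqref{eq:lstr-rels&11disc-1-bis}, \eqref{eq:lstr-rels&11disc-qserre-g=1}, \eqref{eq:lstr-rels&1-1disc-g=1} are exactly the $\ghost_2=1$, $\epsilon=1$, $q_{22}=-1$ relations of $\toba(\lstr(-1,1))$ proven in \S \ref{subsubsection:lstr-1-1disc} (using Lemmas \ref{le:-1bpz} and \ref{lemma:derivations-zn}). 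The trivial ghost equations \eqref{eq:lstr-rels-ghost-int-trivial} for the point 3 follow from $\ghost_3=0$, i.e.\ from $\partial_i(x_{1}x_3-q_{13}x_3x_1)=\partial_i(x_{\fudos}x_3-q_{13}x_3x_{\fudos})=0$ for $i\in\I_3\cup\{\fudos\}$. Finally the ``diagonal'' relations \eqref{eq:lstr-rels-a10-2-diagpart} and \eqref{eq:lstr-rels-a10-2} hold because by Lemma \ref{lemma:braiding-K-weak-block-points} the braided vector space $K^1$ generated by $z_{2,0}=x_2$, $z_{2,1}=x_{\fudos 2}$ and $z_{3,0}=x_3$ has diagonal braiding of the type displayed in the proof of Theorem \ref{thm:points-block-eps1}, whose Nichols algebra is the one of type $\mathfrak g(2,3)$, whose defining relations are known; pulling back to $\toba(V)$ identifies $\ztt_{12}=x_{\fudos 23}$, $\ztt_{123}=[x_{\fudos 2},x_{23}]_c$ with the corresponding root vectors and translates the known defining relations of $\toba(\mathfrak g(2,3))$ into \eqref{eq:lstr-rels-a10-2-diagpart}, \eqref{eq:lstr-rels-a10-2}.

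\emph{Step 2 (Spanning).} Arrange the generators in the order used in $B$ and establish straightening rules: every product of two consecutive PBW letters out of order can be rewritten as a linear combination of products respecting the order, using the relations of $\widetilde{\cB}$. The relations in the $V_1$-part of $B$ ($x_1$, $x_{\fudos}$) are controlled by Lemma \ref{prop:1block}; the commutations between a $V_1$-letter and a letter from the $K^1$-part follow from \eqref{eq:lstr-rels&11disc-1-bis}, \eqref{eq:lstr-rels&11disc-qserre-g=1}, \eqref{eq:lstr-rels-ghost-int-trivial} together with Lemma \ref{le:-1bpz} (which gives $x_1z_{j,n}=\epsilon^n q_{1j}z_{j,n}x_1$ and analogues for $x_{\fudos}$); the internal commutations among the $K^1$-letters are the standard PBW-straightening for the Nichols algebra of $\mathfrak g(2,3)$ and mirror those of \cite[8.3.4]{AA17}. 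Together these rules write every monomial in a finite number of steps as a linear combination of elements of $B$.

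\emph{Main obstacle.} The delicate point is Step 1 for \eqref{eq:lstr-rels-a10-2} and the identification of the iterated braided commutators $[\ztt_{12},[\ztt_{12},\ztt_{123}]_c]_c$, $[\ztt_{12},\ztt_{123}]_c$, $[\ztt_{123},x_3]_c$, $[\ztt_{123},x_{32}]_c$ in $\toba(V)$ with the actual root vectors of $\toba(K^1)\simeq \toba(\mathfrak g(2,3))$. One must verify, using the skew-derivation criterion (\emph{an element of $\toba(V)$ is zero iff killed by all $\partial_i$, $i\in\I_3\cup\{\fudos\}$}), that these commutators are indeed primitive representatives of the expected root spaces of $K^1$ after applying the projection $\pi_{\NA(V_1)\#\ku\Gamma}$, and that the exponents in \eqref{eq:lstr-rels-a10-2} match the $N_\alpha$ of the root system of $\mathfrak g(2,3)$. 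Once this dictionary is set up, everything else is bookkeeping with the known PBW bases of $\toba(\cV(1,2))$ and $\toba(\mathfrak g(2,3))$.
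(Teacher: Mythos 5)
Your proposal is correct and follows essentially the same route as the paper, which itself omits the detailed verification and defers to the analogous arguments in \cite{aah-triang}: one checks the relations via the skew-derivation criterion and the identification of $K^1$ (spanned by $z_{2,0}=x_2$, $z_{2,1}=x_{\fudos 2}$, $z_{3,0}=x_3$) with the diagonal braiding of type $\mathfrak g(2,3)$, establishes spanning by the ordered monomials, and concludes by comparing $|B|=p^2 2^7 3^4$ with the dimension supplied by Theorem \ref{thm:points-block-eps1}. The ``main obstacle'' you flag -- matching the iterated braided commutators with the root vectors of $\toba(K^1)$ and the exponents with the orders $N_\alpha$ -- is precisely the bookkeeping the paper leaves to \emph{loc.\ cit.}, so your outline reproduces the intended proof.
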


\subsubsection{The Nichols algebra $\toba(\lstr(A(1\vert 0)_3; \omega))$} \label{subsubsection:lstr-a(10)3}

\begin{prop} \label{pr:lstr-a(10)3}
The algebra $\toba(\lstr(A(1\vert 0)_3; \omega))$ is presented by generators $x_i$, $i\in \Iw_3$, and relations
\eqref{eq:rels B(V(1,2))-bis}, \eqref{eq:rels B(V(1,2))-x1p}, \eqref{eq:rels B(V(1,2))-x2p-bis},
\eqref{eq:lstr-rels&11disc-1-bis},  \eqref{eq:lstr-rels&11disc-qserre-g=1}, \eqref{eq:lstr1omega1-z0cube-bis}, \eqref{eq:lstr-rels&1omega1-bis}, \eqref{eq:lstr-rels&1omega13-bis}
\eqref{eq:lstr-rels-ghost-int-trivial}, 
\begin{align}
\label{eq:lstr-rels-a10-3-diagpart}
x_2^3 &=0, & x_3^2 &= 0, & x_{223} &= 0,
\\ \label{eq:lstr-rels-a10-3}
x_{\fudos 2}x_{\fudos 23}&+q_{13}q_{23}x_{\fudos 23}x_{\fudos 2}=0, &  \ztt_{123}^6&=0.
\end{align}
The set
\begin{multline*}
B =\big\{ x_1^{m_1} x_{\fudos}^{m_2}x_{\fudos 2}^{n_1} \ztt_{12}^{n_2} [\ztt_{12},\ztt_{13}]_c^{n_3} \ztt_{123}^{n_4} [\ztt_{123},\ztt_{13}]_c^{n_5} [\ztt_{13},x_{32}]_c^{n_6}
\\ \ztt_{13}^{n_7} x_3^{n_8} x_{32}^{n_9} x_2^{n_{10}}:
0 \le m_1, m_2< p,  \, 0\le n_2,n_3,n_5,n_6,n_8,n_9<2,  \\ 0\le n_1,n_7,n_{10}<3, \, 0\le n_4<6 \big\}
\end{multline*}
is a basis of $\toba(\lstr(A(1\vert 0)_3; \omega))$ and $\dim \toba(\lstr(A(1\vert 0)_3; \omega)) = p^22^73^4$. \qed
\end{prop}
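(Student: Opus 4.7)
The plan is to adapt the three-step strategy used throughout the paper (relations hold, PBW basis spans, dimension matches) to the present case. First I would verify that each listed relation equals zero in $\toba(\lstr(A(1\vert 0)_3; \omega))$. The Jordan plane relations \eqref{eq:rels B(V(1,2))-bis}, \eqref{eq:rels B(V(1,2))-x1p}, \eqref{eq:rels B(V(1,2))-x2p-bis} hold by Lemma \ref{prop:1block} applied to $V_1$. The relations \eqref{eq:lstr-rels&11disc-1-bis}, \eqref{eq:lstr-rels&11disc-qserre-g=1}, \eqref{eq:lstr1omega1-z0cube-bis}, \eqref{eq:lstr-rels&1omega1-bis}, \eqref{eq:lstr-rels&1omega13-bis} are precisely the defining relations of $\toba(\lstr(\omega,1))$ from Proposition \ref{pr:lstr1omega1}, applied to the braided subspace $V_1 \oplus \ku x_2$ (which is of type $\lstr(\omega,1)$ in this case). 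The ghost-trivial relations \eqref{eq:lstr-rels-ghost-int-trivial} between $V_1$ and $x_3$ follow since $\widetilde{q}_{13}=1$ and the ghost from the block to the point $3$ vanishes; this is checked by showing the expressions are killed by all $\partial_i$, using \eqref{eq:skewderivations}. The purely diagonal relations \eqref{eq:lstr-rels-a10-3-diagpart} come from the diagonal subdiagram of type $A(1\vert 0)$ on the vertices $2,3$, again via skew-derivation arguments. The new mixed relations \eqref{eq:lstr-rels-a10-3} are checked by direct computation of $\partial_i$ on each expression.

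Second, I would identify the block-point analysis via $K = \toba(V)^{\mathrm{co}\,\toba(V_1)} \simeq \toba(K^1)$, where $K^1 = \ad_c \toba(V_1)(V_{\diag})$ has the explicit diagonal basis described in the proof of Theorem \ref{thm:points-block-eps1} for $\lstr(A(1\vert 0)_3; \omega)$. From that analysis and \cite[Table 2, row 15]{H-classif}, $K^1$ has the same root system as $\mathfrak g(2,3)$, so $\dim K = 2^7 3^4$. Combined with $\dim \toba(V_1) = p^2$ from Lemma \ref{prop:1block} and the isomorphism $\toba(V) \simeq K \# \toba(V_1)$, this gives $\dim \toba(V) = p^2 2^7 3^4$, exactly $\#B$.

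Third, I would show that the quotient $\cBt$ of $T(V)$ by the listed relations is spanned by $B$. This is a reduction argument: using the commutators and $q$-commutators implied by the relations, every monomial in the generators $x_1, x_{\fudos}, x_2, x_3$ can be rewritten as a linear combination of ordered products whose factors are (from left to right) the PBW generators $x_1, x_{\fudos}, x_{\fudos 2}, \ztt_{12}, [\ztt_{12},\ztt_{13}]_c, \ztt_{123}, [\ztt_{123},\ztt_{13}]_c, [\ztt_{13},x_{32}]_c, \ztt_{13}, x_3, x_{32}, x_2$, each raised to exponents within the stated ranges. The truncations on exponents come from the $p$-th powers in the block, the relations \eqref{eq:lstr-rels-a10-3-diagpart}, and the cubic/sixth-power relations \eqref{eq:lstr1omega1-z0cube-bis}, \eqref{eq:lstr-rels&1omega1-bis}, \eqref{eq:lstr-rels&1omega13-bis}, \eqref{eq:lstr-rels-a10-3}. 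Combining spanning with the dimension upper bound $\dim \cBt \le \#B = p^2 2^7 3^4$ and the surjection $\cBt \twoheadrightarrow \toba(V)$, which is an equality of dimensions, we conclude that $\cBt \simeq \toba(V)$ and $B$ is a basis.

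The main obstacle is the spanning step: verifying that the prescribed relations suffice to reduce every word to the PBW form, particularly reordering the higher iterated brackets $[\ztt_{12},\ztt_{13}]_c$, $[\ztt_{123},\ztt_{13}]_c$, $[\ztt_{13},x_{32}]_c$ past each other and past $x_3, x_{32}, x_2$. The clean way to organize this is to invoke the Kharchenko/Angiono theory of PBW bases for pre-Nichols algebras, using \eqref{eq:lstr-rels-a10-3} as the Serre-type relations complementing those already exhibited for the subspace $V_1 \oplus \ku x_2$; the remaining commutation relations among the listed PBW letters are then forced and need only be checked on the generators, which reduces to a finite verification entirely parallel to \cite{aah-triang}.
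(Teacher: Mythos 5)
Your proposal is correct and follows essentially the same route the paper takes: the paper omits the proof of this proposition, referring to \cite{aah-triang} for the verification of the relations and the spanning argument, while the dimension $p^2 2^7 3^4$ is obtained exactly as you describe, from $\NA(V)\simeq K\#\NA(V_1)$ with $K^1$ of diagonal type with the root system of $\mathfrak g(2,3)$ (Theorem \ref{thm:points-block-eps1} and \cite[Table 2, row 15]{H-classif}, \cite[8.3.4]{AA17}). Your identification of $V_1\oplus\ku x_2$ as type $\lstr(\omega,1)$ and the resulting partition of the relation list also matches the paper's discussion in \S\ref{subsection:points-block-presentation}.
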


\subsubsection{The Nichols algebra $\toba(\lstr(A(2\vert 0)_1; \omega))$} \label{subsubsection:lstr-a(20)1}

\begin{prop} \label{pr:lstr-a(20)1}
The algebra $\toba(\lstr(A(2\vert 0)_1; \omega))$ is presented by generators $x_i$, $i\in \Iw_4$, and relations
\eqref{eq:rels B(V(1,2))-bis}, \eqref{eq:rels B(V(1,2))-x1p}, \eqref{eq:rels B(V(1,2))-x2p-bis},
\eqref{eq:lstr-rels&11disc-1-bis}, \eqref{eq:lstr-rels&11disc-qserre-g=1}, \eqref{eq:lstr-rels&1-1disc-g=1},
\eqref{eq:lstr-rels-ghost-int-trivial}, and
\begin{align}\label{eq:lstr-rels-a(20)1-diagpart}
&\begin{aligned}
x_{24}&=0, & x_{332}&=0, & x_{334}&=0, & x_{443}&=0, 
\end{aligned}
\\ \label{eq:lstr-rels-a(20)1-diagpart-2}
&\begin{aligned}
x_2^2 &=0, & x_3^3&=0, & x_{34}^3&=0, & x_4^3&=0,
\end{aligned}
\\ \label{eq:lstr-rels-a(20)1}
&\begin{aligned}
{[x_{\fudos 23},x_2]_c}^3&=0, & {[[x_{\fudos 23},x_2]_c, [x_{\fudos 23},x_{24}]_c]_c}^3&=0,  \\
{[x_{\fudos 23},x_{24}]_c}^3&=0, & {[ [x_{\fudos 23},x_2]_c, [[x_{\fudos 23},x_{24}]_c,x_2]_c]_c}^3&=0,\\
{[[x_{\fudos 23},x_{24}]_c,x_2]_c}^3&=0, & { [ [x_{\fudos 23},x_{24}]_c, [[x_{\fudos 23},x_{24}]_c,x_2]_c]_c}^3&=0.
\end{aligned}
\end{align}

The set
\begin{multline*}
B =\big\{ x_1^{m_1} x_{\fudos}^{m_2}
x_{\fudos 2}^{n_1} \ztt_{12}^{n_2} [\ztt_{12},\ztt_{1234}]_c^{n_3} \ztt_{123}^{n_4} [\ztt_{123},\ztt_{1234}]_c^{n_5}
[\ztt_{123},[\ztt_{1234}, x_3]_c]_c^{n_6} 
\\
\ztt_{1234}^{n_7} [\ztt_{1234}, [\ztt_{1234}, x_3]_c]_c^{n_8} [\ztt_{1234}, x_3]_c^{n_{9}} [\ztt_{1234}, x_{32}]_c^{n_{10}} x_{\fudos 234}^{n_{11}} x_3^{n_{12}} x_{32}^{n_{13}} 
\\
x_{324}^{n_{14}} x_{34}^{n_{15}} x_2^{n_{16}} x_4^{n_{17}}: \, 0\le n_{1},n_{2},n_{3},n_{10},n_{11},n_{13},n_{14},n_{15} <2,
\\
0 \le m_1, m_2 < p, \, 0\le n_{4},n_{5},n_{6},n_{7},n_{8},n_{9},n_{12},n_{16},n_{17} <3 \big\}
\end{multline*}
is a basis of $\toba(\lstr(A(2\vert 0)_1; \omega))$ and $\dim \toba(\lstr(A(2\vert 0)_1; \omega)) = p^2 2^83^9$.
\qed
\end{prop}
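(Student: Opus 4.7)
The plan is to exploit the bosonization decomposition $\toba(V) \simeq K \# \toba(V_1)$ from \S\ref{subsection:YD>3-setting}, where $V_1 = \langle x_1, x_{\fudos}\rangle \simeq \cV(1,2)$ is the Jordan block and $K \simeq \toba(K^1)$ with $K^1 = \ad_c\toba(V_1)(V_{\diag})$. Since the block--point interactions are weak with ghost vector $(\ghost_2,\ghost_3,\ghost_4) = (1,0,0)$, Lemma \ref{lemma:braiding-K-weak-block-points} and the case analysis of Theorem \ref{thm:points-block-eps1} identify $K^1$ as a diagonal braided vector space whose generalized Dynkin diagram matches that of a Nichols algebra of type $\mathfrak{g}(3,3)$. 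Combining $\dim\toba(V_1) = p^2$ from Lemma \ref{prop:1block} with $\dim\toba(K^1) = 2^8 3^9$ from \cite[Table 3, row 18]{H-classif} and \cite[8.4.5]{AA17} yields the stated dimension $p^2 \cdot 2^8 \cdot 3^9$.

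Next I would verify that every listed relation vanishes in $\toba(V)$ via the standard skew-derivation criterion: if $\partial_i(w) = 0$ in $\toba(V)$ for all $i$, then $w = 0$. The block relations \eqref{eq:rels B(V(1,2))-bis}--\eqref{eq:rels B(V(1,2))-x2p-bis} come from Lemma \ref{prop:1block}; the commutations \eqref{eq:lstr-rels-ghost-int-trivial} express the trivial ghost between the block and the vertices $3,4$ and follow from the weak interaction with $a_3=a_4=0$; the three relations \eqref{eq:lstr-rels&11disc-1-bis}, \eqref{eq:lstr-rels&11disc-qserre-g=1}, \eqref{eq:lstr-rels&1-1disc-g=1} are those defining the subalgebra generated by $V_1 \oplus \ku x_2$, isomorphic to $\toba(\lstr(-1,1))$ by Proposition \ref{pr:lstr1-1disc}; and the Serre/power-root relations \eqref{eq:lstr-rels-a(20)1-diagpart}, \eqref{eq:lstr-rels-a(20)1-diagpart-2} are those of the diagonal Nichols subalgebra $\toba(V_{\diag})$ of Cartan/super type. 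Finally, the six cube relations \eqref{eq:lstr-rels-a(20)1} correspond, under the identification $z_{j,n} \leftrightarrow \ad_c(x_{\fudos})^n x_j$ of \eqref{eq:zjn}, to the cube vanishings of the non-simple root vectors in $\toba(K^1)$ of type $\mathfrak{g}(3,3)$; each is a translation of the corresponding relation of $\toba(K^1)$ via the bosonization.

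After the relations are verified, the quotient $\widetilde{\cB}$ of $T(V)$ by them is spanned by $B$: the commutations \eqref{eq:lstr-rels&11disc-1-bis}, \eqref{eq:lstr-rels-ghost-int-trivial}, together with the analogue of Lemma \ref{lemma:x1, x12 commute with zt}, move every occurrence of $x_1, x_{\fudos}$ to the left of any monomial, where the truncations \eqref{eq:rels B(V(1,2))-x1p}, \eqref{eq:rels B(V(1,2))-x2p-bis} produce the prefix $x_1^{m_1} x_{\fudos}^{m_2}$ with $0 \le m_i < p$; the remaining suffix is a word in the generators of $K^1$ which reduces to the displayed PBW monomial by the known PBW basis of $\toba(K^1)$ of type $\mathfrak{g}(3,3)$, see \cite[8.4.5]{AA17}. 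Since $\widetilde{\cB}$ surjects onto $\toba(V)$ and $|B| = p^2 \cdot 2^8 \cdot 3^9 = \dim \toba(V)$, the surjection is an isomorphism and $B$ is a basis. The main obstacle is the explicit verification of the six cube relations \eqref{eq:lstr-rels-a(20)1}, which involve up to fourfold iterated braided commutators; these computations reduce to their characteristic-zero counterparts in \cite{aah-triang, AA17} once one checks that every combinatorial coefficient arising from the iterated braided Leibniz rule is of the form $(n)_\omega$ or $(n)_{-1}$ with $n \le 3$, hence nonzero in $\ku$ under our standing hypothesis $p > 3$ (needed since $\omega \in \G'_3$).
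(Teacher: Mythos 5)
Your proposal follows essentially the same route as the paper, which itself omits the proof of this proposition as a ``minor variation'' of the characteristic-zero arguments in \cite{aah-triang}: the bosonization $\toba(V)\simeq \toba(K^1)\#\toba(V_1)$ with $\dim\toba(V_1)=p^2$, the identification of $K^1$ as diagonal of type $\mathfrak g(3,3)$ via Lemma \ref{lemma:braiding-K-weak-block-points} with $\dim\toba(K^1)=2^8 3^9$, and the derivation-plus-spanning argument are exactly the strategy the paper uses (explicitly in the proof of Theorem \ref{thm:points-block-eps1} and implicitly for the presentation). One minor caveat: your closing claim that the combinatorial coefficients are ``of the form $(n)_\omega$ or $(n)_{-1}$ with $n\le 3$, hence nonzero'' is not literally right ($(2)_{-1}=0$ and $(3)_\omega=0$ in any characteristic); the correct justification, which the paper uses in Lemmas \ref{le:zcoact} and \ref{lemma:braiding-K-weak-block-points}, is that the (q-)binomial coefficients that are nonzero in characteristic $0$ remain nonzero when $p>3$, so no new degeneration occurs.
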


\subsubsection{The Nichols algebra $\toba(\lstr(D(2\vert 1); \omega))$} \label{subsubsection:lstr-D(21)}

\begin{prop} \label{pr:lstr-d(21)}
The algebra $\toba(\lstr(D(2\vert 1); \omega))$ is presented by generators $x_i$, $i\in \Iw_4$, and relations
\eqref{eq:rels B(V(1,2))-bis}, \eqref{eq:rels B(V(1,2))-x1p}, \eqref{eq:rels B(V(1,2))-x2p-bis},
\eqref{eq:lstr-rels&11disc-1-bis}, \eqref{eq:lstr-rels&11disc-qserre-g=1}, \eqref{eq:lstr-rels&1-1disc-g=1},
\eqref{eq:lstr-rels-ghost-int-trivial}, and
\begin{align}
\label{eq:lstr-rels-d(21)}
&\begin{aligned}
&[[[x_{\fudos 23},x_{24}]_c, x_3]_c, x_3]_c^3=0, &  
&[[x_{\fudos 23},x_{24}]_c,x_3]_c^3=0, &&
\\
& [[x_{\fudos 23},x_{24}]_c,x_{334}]_c^3=0, & &[x_{\fudos 23},x_{24}]_c^3=0, & &[x_{\fudos 23},x_2]_c^3=0, 
 \end{aligned}
\\
\label{eq:lstr-rels-d(21)-diagpart-1}
&\begin{aligned}
x_{24}&=0, & x_{332}&=0, & x_{443}&=0, & [[x_{234}&,x_3]_c,x_3]_c=0,
\end{aligned}
\\ \label{eq:lstr-rels-d(21)-diagpart-2}
&\begin{aligned}
x_2^2&=0, & x_3^3&=0, & x_{34}^3&=0, & x_{334}^3&=0, \quad x_4^3=0.
\end{aligned}
\end{align}
The set
\begin{multline*}
B =\{ x_1^{m_1} x_{\fudos}^{m_2} x_{\fudos 2}^{n_1} \ztt_{12}^{n_2} \ztt_{123}^{n_3} \ztt_{1234}^{n_4} [\ztt_{1234},x_3]_c^{n_5}
[[\ztt_{1234}, x_3]_c, x_3]_c^{n_6} [\ztt_{1234},x_{334}]_c^{n_7}
\\
x_{\fudos 234}^{n_8} [x_{\fudos 234}, x_3]_c^{n_{9}} x_{3}^{n_{10}} x_{32}^{n_{11}} x_{324}^{n_{12}} [x_{324},x_3]_c^{n_{13}} x_{334}^{n_{14}} x_{34}^{n_{15}} x_2^{n_{16}} x_4^{n_{17}}:
\\
0\le m_1,m_2 < p, \, 0\le n_{1},n_{2},n_{8},n_{9},n_{11},n_{12},n_{13},n_{16} <2,
\\
0\le n_{3},n_{4},n_{5},n_{6},n_{7},n_{10},n_{14},n_{15},n_{17} <3 \}.
\end{multline*}
is a basis of $\toba(\lstr(D(2\vert 1); \omega))$ and $\dim \toba(\lstr(D(2\vert 1); \omega)) = p^22^83^9$. \qed
\end{prop}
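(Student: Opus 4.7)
The plan is to deduce the presentation from the general framework already established in this Section, combined with the known presentation of $\NA(K^1)$, where $K^1$ is the diagonal-type braided vector space from Lemma~\ref{lemma:braiding-K-weak-block-points}. By the proof of Theorem~\ref{thm:points-block-eps1}, $K^1$ for $\lstr(D(2\vert 1);\omega)$ is of diagonal type with the same root system as $\mathfrak{g}(3,3)$, so $\NA(K^1)$ is a known finite-dimensional Nichols algebra of dimension $2^8 3^9$; via the decomposition $\NA(V)\simeq K\#\NA(V_1)$ with $K\simeq\NA(K^1)$ and $\NA(V_1)=\NA(\cV(1,2))$ of dimension $p^2$, the target dimension $p^2 2^8 3^9$ is immediate, and a PBW basis of $\NA(V)$ is obtained by concatenating a PBW basis of $\NA(K^1)$ in the variables $z_{j,n}$ (and iterated braided commutators of these) with the basis $\{x_1^{m_1} x_{\fudos}^{m_2}\}_{0\le m_1,m_2 < p}$ of $\NA(V_1)$.

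First I would check that every listed relation holds in $\NA(V)$. The relations \eqref{eq:rels B(V(1,2))-bis}, \eqref{eq:rels B(V(1,2))-x1p}, \eqref{eq:rels B(V(1,2))-x2p-bis} are the defining relations of the block $\NA(V_1)$ from Lemma~\ref{prop:1block}. The relations \eqref{eq:lstr-rels&11disc-1-bis}, \eqref{eq:lstr-rels&11disc-qserre-g=1}, \eqref{eq:lstr-rels&1-1disc-g=1} are the $\lstr(-1,1)$-type relations coming from the action of the block on $x_2$, with ghost $\ghost_2=1$, and hold as in \S\ref{sec:yd-dim3}; they can be verified by applying the skew-derivations $\partial_i$ as in the proof of Proposition~\ref{prop:-1block}. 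The commutation relations \eqref{eq:lstr-rels-ghost-int-trivial} between the block generators and $x_3,x_4$ follow from the trivial ghost $\ghost_3=\ghost_4=0$. The remaining relations \eqref{eq:lstr-rels-d(21)}, \eqref{eq:lstr-rels-d(21)-diagpart-1}, \eqref{eq:lstr-rels-d(21)-diagpart-2} are pulled back from the defining relations of $\NA(K^1)$ via the change of variables $z_{i,0}=x_i$, $z_{i,1}=x_{\fudos i}$, using the transfer results of \cite{aah-triang} which hold verbatim in our setting since no combinatorial coefficient vanishes in characteristic $p>2$ (the key binomials appearing in Lemma~\ref{le:zcoact} are non-zero).

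Next I would show that the quotient $\widetilde{\NA}$ of $T(V)$ by the listed relations is spanned by $B$. The strategy is the standard diamond/reduction argument: split $B$ as $B_1\cdot B_2$ with $B_1$ the Jordan-plane part $\{x_1^{m_1}x_{\fudos}^{m_2}\}$ and $B_2$ the PBW monomials in the $z$'s and iterated brackets; show that the subspace spanned by $B$ is a left ideal of $\widetilde{\NA}$ containing $1$. Relations \eqref{eq:lstr-rels-ghost-int-trivial} and \eqref{eq:lstr-rels&11disc-1-bis} let us move all occurrences of $x_1,x_{\fudos}$ to the left, while relations \eqref{eq:lstr-rels&11disc-qserre-g=1}, \eqref{eq:lstr-rels&1-1disc-g=1}, \eqref{eq:lstr-rels-d(21)-diagpart-1}, \eqref{eq:lstr-rels-d(21)-diagpart-2} along with \eqref{eq:lstr-rels-d(21)} allow the remaining $x_2,x_3,x_4$-factors to be rewritten in terms of the ordered PBW letters in $B_2$.

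Finally I would verify linear independence of $B$ in $\NA(V)$. For this I use the isomorphism $\NA(V)\simeq K\#\NA(V_1)$: by Lemma~\ref{prop:1block} the factors $x_1^{m_1}x_{\fudos}^{m_2}$ form a basis of $\NA(V_1)$, and by the classification of $\mathfrak{g}(3,3)$-type Nichols algebras (\cite[Table 3, row 18]{H-classif}, \cite[8.4.5]{AA17}) the iterated braided commutators among the $z_{j,n}$ listed in $B_2$ form a PBW basis of $\NA(K^1)=K$; multiplying gives linear independence of $B$ in $K\#\NA(V_1)$. Combined with the spanning statement, this shows $\widetilde{\NA}\simeq\NA(V)$ and yields the stated dimension.

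The main obstacle I expect is the bookkeeping in step two: translating the defining relations of $\NA(K^1)$ (originally phrased in the $z_{j,n}$ and their braided commutators) back into relations among the original generators $x_i$ requires identifying each iterated bracket $[\ztt_{\cdot\cdot\cdot},\ldots]_c$ with the corresponding $x$-bracket appearing in \eqref{eq:lstr-rels-d(21)}, and checking that the quantum Serre-like relations \eqref{eq:lstr-rels-d(21)-diagpart-1} are exactly those needed to close the PBW basis under the straightening procedure. Once this identification is in place (which follows the template of \cite{aah-triang} since the only characteristic-$p$ interference is the appearance of the $p$-th power relations \eqref{eq:rels B(V(1,2))-x1p}, \eqref{eq:rels B(V(1,2))-x2p-bis} on the block, and these commute with every $z_{j,n}$-step), the argument runs as in \emph{loc.\ cit.}
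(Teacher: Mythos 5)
Your proposal follows essentially the same route as the paper: the decomposition $\NA(V)\simeq K\#\NA(V_1)$ with $K\simeq\NA(K^1)$, the identification of $K^1$ as diagonal of type $\mathfrak g(3,3)$ with $\dim\NA(K^1)=2^83^9$ (done in the proof of Theorem \ref{thm:points-block-eps1}), and the transfer of the relations from \cite{aah-triang} with the extra $p$-power relations on the block — the paper itself omits the details, stating they are minor variations of \emph{loc.\ cit.} Your outline fills in that skeleton correctly, including the observation that the relevant combinatorial coefficients do not vanish for $p>2$ (though for this $\omega\in\G_3'$ case one should also note the standing assumption $p>3$ used in the proof of Theorem \ref{thm:points-block-eps1}).
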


\subsubsection{The Nichols algebra $\toba(\lstr(A_2, 2))$} \label{subsubsection:lstr-a-22}

\begin{prop} \label{pr:lstr-a-22}
The algebra $\toba(\lstr(A_2,2))$ is presented by generators $x_i$, $i\in \Iw_3$, and relations
\eqref{eq:rels B(V(1,2))-bis}, \eqref{eq:rels B(V(1,2))-x1p}, \eqref{eq:rels B(V(1,2))-x2p-bis}, \eqref{eq:lstr-rels&11disc-1-bis}, \eqref{eq:lstr-rels&1-1disc-g=1}, \eqref{eq:lstr-rels&11disc-qserre-g=2}, \eqref{eq:lstr-rels&1-1disc-g=2},
\eqref{eq:lstr-rels-ghost-int-trivial}, and

\begin{align}\label{eq:lstr-rels-a-22-diagpart}
x_2^2 &=0, \qquad x_3^2 =0, \qquad x_{32}^2=0.
\\\label{eq:lstr-rels-a-22}
&\begin{aligned}
& [x_{\fudos\fudos 23}, x_2]_c, x_{\fudos 2}]_c^2=0,& &x_{\fudos\fudos 23}^2=0, &
& [x_{\fudos\fudos 23}, x_{\fudos 2}]_c^2=0, 
 \\ 
& [x_{\fudos\fudos 23},x_2]_c^2=0, &
& x_{3\fudos 2}^2=0, &
& [x_{32},x_{\fudos 2}]_c^2=0, & \\
& [[x_{\fudos\fudos 23}, x_2]_c, x_{\fudos 2}]_c, x_3]_c^2=0.
\end{aligned}
\end{align}
The set $B=$
\begin{align*}
\{ x_1^{m_1} x_{\fudos}^{m_2} x_{\fudos\fudos 2}^{n_1} x_{\fudos\fudos 23}^{n_2}  [x_{\fudos\fudos 23}, x_{\fudos 2}]_c^{n_3}
[[x_{\fudos\fudos 23},x_2]_c,x_{\fudos 2}]_c^{n_4} [[[x_{\fudos\fudos 23},x_2]_c,x_{\fudos 2}]_c,x_3]_c^{n_5} \\
[x_{\fudos\fudos 23}, x_2]_c^{n_6} x_3^{n_7} x_{3\fudos 2}^{n_8} [x_{32},x_{\fudos 2}]^{n_9} x_{32}^{n_{10}}
x_{\fudos 2}^{n_{11}} x_2^{n_{12}} : m_1,m_2\in\I_{0, p-1}, n_i\in\I_{0,1} \}.
\end{align*}
is a basis of $\toba(\lstr(A_2,2))$ and $\dim \toba(\lstr(A_2,2)) = p^22^{12}$. \qed
\end{prop}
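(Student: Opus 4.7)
The plan is to mirror the three-step argument used for the preceding propositions in Section \ref{sec:yd-dim>3}: (i) show the listed relations hold in $\toba(V)$; (ii) show they suffice to reduce any element of the free algebra to a $\ku$-linear combination of elements of $B$; (iii) match the resulting upper bound with the lower bound $\dim\toba(V) = p^2\cdot 2^{12}$ already established in the proof of Theorem \ref{thm:points-block-eps1}.

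For step (i), the braided subspace $V_1\oplus \ku x_2$ is of type $\lstr(-1,2)$ (with the index shift $2\leftrightarrow \fudos,\ 3\leftrightarrow 2$), so Proposition \ref{pr:lstr--11disc} automatically gives the relations \eqref{eq:rels B(V(1,2))-bis}, \eqref{eq:rels B(V(1,2))-x1p}, \eqref{eq:rels B(V(1,2))-x2p-bis}, \eqref{eq:lstr-rels&11disc-1-bis}, \eqref{eq:lstr-rels&1-1disc-g=1}, \eqref{eq:lstr-rels&11disc-qserre-g=2}, \eqref{eq:lstr-rels&1-1disc-g=2} in $\toba(V)$. The relations \eqref{eq:lstr-rels-ghost-int-trivial} encode the assumption $\ghost_3=0$ (weak interaction of the block with the point $x_3$), and \eqref{eq:lstr-rels-a-22-diagpart} are the usual quantum Serre relations for the $A_2$ subsystem on $\{x_2,x_3\}$ with diagonal entries $-1$. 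Each of the relations in \eqref{eq:lstr-rels-a-22} is then verified by applying the skew-derivations $\partial_1,\partial_{\fudos},\partial_2,\partial_3$ and invoking the criterion of Subsection \ref{subsection:nichols}: an element of positive degree in $\toba(V)$ vanishes iff it is annihilated by all $\partial_i$.

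For step (ii), I would use the decomposition $\toba(V)\simeq K\#\toba(V_1)$, where $V_1\simeq \cV(1,2)$ contributes the Jordan plane factor of dimension $p^2$ with basis $x_1^{m_1}x_{\fudos}^{m_2}$, $m_i\in\I_{0,p-1}$, and $K=\toba(K^1)$ with $K^1$ of diagonal Cartan type $D_4$ (this is the type identified in the $\lstr(A_2,2)$ case in the proof of Theorem \ref{thm:points-block-eps1}). The twelve variables with exponents $n_1,\dots,n_{12}$ in $B$ are to be identified, in a convex order, with the twelve positive root vectors of that $D_4$ system. Since every vertex of the $D_4$ Dynkin diagram of $K^1$ carries the scalar $-1$, each positive root vector of $\toba(K^1)$ squares to zero, so all exponents lie in $\{0,1\}$; the relations \eqref{eq:lstr-rels-a-22} are precisely (the translates to $T(V)$ of) the complete list of quadratic vanishings for these root vectors. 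A standard reordering argument, using \eqref{eq:lstr-rels-ghost-int-trivial} to move $x_3$ past $x_1,x_{\fudos}$ and the $q$-commutation relations inside $K$ to sort the root vectors, produces a spanning set of size at most $p^2\cdot 2^{12}$.

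Finally, for step (iii), Theorem \ref{thm:points-block-eps1} gives $\dim\toba(V) = p^2\cdot\dim\toba(K_J) = p^2\cdot 2^{12}$. Since the algebra defined by the relations surjects onto $\toba(V)$ and is spanned by $B$ with $|B|\le p^2\cdot 2^{12}$, the surjection must be an isomorphism and $B$ is a basis. The main obstacle will be the bookkeeping in step (ii): explicitly matching the iterated $q$-brackets such as $[[x_{\fudos\fudos 23},x_2]_c,x_{\fudos 2}]_c$ and $[[[x_{\fudos\fudos 23},x_2]_c,x_{\fudos 2}]_c,x_3]_c$ with the positive root vectors of $D_4$ in a Lyndon/convex order, and verifying that \eqref{eq:lstr-rels-a-22} indeed exhausts the $q$-Serre-type relations for the diagonal Nichols algebra $\toba(K^1)$, as carried out for the characteristic zero analogue in \cite{aah-triang}.
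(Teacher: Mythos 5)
Your proposal is correct and follows essentially the route the paper intends: the paper omits the proof of this proposition, stating in \S\ref{subsection:points-block-presentation} that the arguments are minor variations of those in \cite{aah-triang}, and your three steps (verify the relations via the skew-derivations, span by $B$ using $\NA(V)\simeq \NA(K^1)\#\NA(V_1)$ with $K^1$ of Cartan type $D_4$ with all vertices labelled $-1$, and match against $\dim\toba(V)=p^2 2^{12}$ supplied by Theorem \ref{thm:points-block-eps1}) are exactly that scheme. One small slip: the subalgebra generated by $V_1\oplus\ku x_2$ is of type $\lstr(-1,2)$ (a $1$-block with a point labelled $-1$ and ghost $2$), so after the index shift the relevant presentation is Proposition \ref{pr:lstr1-1disc} with $\ghost=2$, not Proposition \ref{pr:lstr--11disc}, which concerns a $-1$-block and would wrongly import the relation $x_1^2=0$.
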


\subsubsection{The Nichols algebra $\toba(\lstr(A_{\theta - 1}))$} \label{subsubsection:lstr-a-n}
For details of the following result we refer to \cite[\S 5.3.8]{aah-triang}. In particular, 
one defines $x_{ij}$, $2\le i\le j$, as usual, $\ya_{1\ell} = [x_{\fudos 2}, x_{3\ell}]_c$
and $\ya_{k\ell} = [\ya_{1\ell}, x_{2k}]$, $k>1$. 

\begin{prop} \label{pr:lstr-a-n}
The algebra $\toba(\lstr(A_{\theta - 1}))$ is presented by generators $x_i$, $i\in \Iw_{\theta}$, and relations
\eqref{eq:rels B(V(1,2))-bis}, \eqref{eq:rels B(V(1,2))-x1p}, \eqref{eq:rels B(V(1,2))-x2p-bis},
\eqref{eq:lstr-rels&11disc-1-bis}, \eqref{eq:lstr-rels&11disc-qserre-g=1}, \eqref{eq:lstr-rels&1-1disc-g=1},
\eqref{eq:lstr-rels-ghost-int-trivial}, and
\begin{align}\label{eq:lstr-rels-a-n-diagpart}
&x_{ij}^2 =0, \qquad 2\le i\le j\le\theta,\\
\label{eq:lstr-rels-a-n-diagpart-2}
&[x_{k-1 \, k \, k+1}, x_k]=0, & & 3\le k <\theta.
\\\label{eq:lstr-rels-a-n}
& x_{1j}^2, &  &\ya_{k\ell}^2, & j,k,\ell\in\I_\theta, & k<\ell,
\end{align}

Furthermore there is a PBW-basis in terms of the positive roots of the root system of type $D_{\theta}$
and $\dim \toba(\lstr(A_{\theta - 1})) = p^22^{\theta(\theta - 1)}$. \qed
\end{prop}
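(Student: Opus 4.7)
The plan is to transfer the argument of \cite[\S 5.3.8]{aah-triang} to positive characteristic by incorporating the $p$-th power relations from the Jordan plane. The key structural input is the splitting $\toba(V) \simeq \toba(K^1) \# \toba(V_1)$, where $V_1 = \langle x_1, x_{\fudos}\rangle$ is the Jordan plane, so $\dim \toba(V_1) = p^2$ by Lemma \ref{prop:1block}. Since Theorem \ref{thm:points-block-eps1} already gives $\dim \toba(V) = p^2 \cdot 2^{\theta(\theta-1)}$, the task reduces to identifying defining relations and an explicit PBW basis.

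First I would apply Lemma \ref{lemma:braiding-K-weak-block-points} to identify $K^1$ as a braided vector space of diagonal type with basis $(z_{2,0}, z_{2,1}, z_{3,0}, \dots, z_{\theta,0})$, where $z_{2,0} = x_2$, $z_{2,1} = x_{\fudos 2}$, and $z_{j,0} = x_j$ for $j \geq 3$. A direct computation using the formula for $\bp_{im,jn}$ shows all diagonal labels are $-1$ while the non-trivial edges form a generalized Dynkin diagram of type $D_\theta$, with $z_{2,0}$ and $z_{2,1}$ the two branch tails attached to $z_{3,0}$ and $z_{3,0} - z_{4,0} - \cdots - z_{\theta,0}$ the remaining chain. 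By \cite{H-classif}, $\toba(K^1)$ has dimension $2^{\theta(\theta-1)}$ and admits a standard PBW basis indexed by the positive roots of $D_\theta$, all exponents lying in $\{0,1\}$.

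Next I would verify that all listed relations vanish in $\toba(V)$. The relations \eqref{eq:rels B(V(1,2))-bis}--\eqref{eq:rels B(V(1,2))-x2p-bis} are the Jordan plane relations from Lemma \ref{prop:1block}; the relations \eqref{eq:lstr-rels&11disc-1-bis}--\eqref{eq:lstr-rels&1-1disc-g=1} come from $\toba(V_1 \oplus \ku x_2) \simeq \toba(\lstr(-1, 1))$ via Proposition \ref{pr:lstr1-1disc}; and \eqref{eq:lstr-rels-ghost-int-trivial} encodes the trivial ghost for $j \geq 3$, following from \eqref{eq:adx1-zn} and \eqref{eq:adx12-zn}. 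The remaining relations \eqref{eq:lstr-rels-a-n-diagpart}--\eqref{eq:lstr-rels-a-n} are precisely the defining relations of $\toba(K^1)$ as a super-type Nichols algebra of type $D_\theta$, re-expressed in the original generators: the $x_{ij}^2$ and the Serre-type relations \eqref{eq:lstr-rels-a-n-diagpart-2} come from the $A_{\theta-1}$ sub-root system, while the $x_{1j}^2$ and $\ya_{k\ell}^2$ correspond to the squares at the additional positive roots of $D_\theta$ that involve the vertex $z_{2,1}$.

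Finally, letting $\cBt$ be the quotient of $T(V)$ by all listed relations, the candidate set $B$ has cardinality $p^2 \cdot 2^{\theta(\theta-1)}$ and spans $\cBt$: closure of the span of $B$ under left multiplication by each $x_i$, $i \in \Iw_\theta$, follows by a standard induction using the commutation relations to normalize monomials and the power relations to truncate exponents. Since $\cBt$ surjects onto $\toba(V)$ and $\dim \toba(V) = |B|$, the surjection is an isomorphism and $B$ is a PBW basis. The main obstacle will be the combinatorial bookkeeping in translating the PBW root vectors of $D_\theta$ into iterated $q$-brackets $x_{ij}, \ya_{k\ell}$ and verifying the normalization step; this parallels the characteristic-zero treatment in \cite[\S 5.3.8]{aah-triang}, with the additional $p$-th power relations only touching the block exponents $m_1, m_2$ and hence leaving the diagonal $K^1$ argument untouched.
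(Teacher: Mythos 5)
Your proposal is correct and follows essentially the same route as the paper, which in fact omits the proof entirely and defers to the characteristic-zero argument of \cite[\S 5.3.8]{aah-triang}: split off the Jordan plane via $\NA(V)\simeq \NA(K^1)\#\NA(V_1)$, identify $K^1$ as Cartan type $D_\theta$ at $-1$ via Lemma \ref{lemma:braiding-K-weak-block-points}, check the listed relations hold, and conclude by a spanning-plus-dimension-count argument using the value of $\dim\NA(V)$ from Theorem \ref{thm:points-block-eps1}. The only nitpick is that the second half of \eqref{eq:lstr-rels-ghost-int-trivial}, namely $x_{\fudos}x_j=q_{1j}x_jx_{\fudos}$, follows from $z_{j,1}=0$ because $\ghost_j=0$ (the analogue of Lemma \ref{lemma:derivations-zn}) rather than from \eqref{eq:adx1-zn}--\eqref{eq:adx12-zn}.
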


\subsection{Realizations}\label{subsec:realizations-block-pts}

Let $H$ be a Hopf algebra, $(g_1, \chi_1, \eta)$ a YD-triple and $(g_j, \chi_j)$, $j\in \I_{2, \theta}$,
a family of YD-pairs for $H$, see \S \ref{subsec:realizations-block}. 
Let $(V, c)$ be a braided vector space  with braiding 
\eqref{eq:braiding-block-several-point}.
Then  
\begin{align}
\cV := \cV_{g_1}(\chi_1, \eta) \oplus \Big(\oplus_{j\in \I_{2, \theta}} \ku_{g_j}^{\chi_j} \Big) \in \ydh
\end{align}
is a \emph{principal realization} of $(V, c)$  over  $H$ if
\begin{align*}
q_{ij}&= \chi_j(g_i),& &i, j\in \I_{\theta};& a_j&= q_{j1}^{-1}\eta(g_j), j\in \I_{2, \theta}.
\end{align*}
Thus $(V, c) \simeq \cV$ 
as braided vector space.
Consequently, if $H$ is finite-dimensional and $(V, c)$ is as in Table \ref{tab:toba-finitedim-block-points},
then $\toba (\cV) \# H$ is a finite-dimensional Hopf algebra.
Examples of finite-dimensional pointed Hopf algebras $A =  \NA(V) \# \ku \Gamma$ with $\Gamma$ abelian
like this are listed in Table \ref{tab:hopf-finitedim-block-points} where the interaction is weak, $\epsilon = 1$
and  $\omega \in \G'_3$.
As in \S \ref{subsec:realizations-block-pt}, $\Gamma$ is a product of $\theta$ cyclic groups, $g_i$ is the $i$-th canonical generator, $\chi_j(g_i)=1$ if $i\neq j$; we set $q_{ij}=1$, $i<j$.

\begin{table}[ht]
\caption{Pointed Hopf algebras from a block and several points} \label{tab:hopf-finitedim-block-points}
\begin{center}
\begin{tabular}{|c|c|c|c|}
\hline   $V_J$ &   $\ghost_J$  &  $\Gamma$ &  $dim A$  \\
\hline
{\scriptsize $\xymatrix{\overset{-1}{\circ} \ar  @{-}[r]^{-1}  
& \overset{-1}{\circ}} \dots \xymatrix{\overset{-1}{\circ} \ar  @{-}[r]^{-1}  & \overset{-1}{\circ} }$}
 & $(1, 0,\dots, 0)$    & {\scriptsize  $\Z/p\times \Z/2p \times \Z/2$}   &  $2^{8}p^4$
\\ 
\cline{4-4}
 &   & {\scriptsize $\Z/p\times \Z/2p \times \big(\Z/2\big)^{\theta-2}$} & $2^{(\theta-1)^2} p^4$
\\ \hline
$\xymatrix{\overset{-1}{\circ} \ar  @{-}[r]^{-1}  & \overset{-1}{\circ}}$  & $(2, 0)$     & {\scriptsize  $\Z/p\times \Z/2p \times \Z/2$} & $2^{14}p^4$
\\ \hline
$\xymatrix{\overset{-1}{\circ} \ar  @{-}[r]^{\omega}  & \overset{-1}{\circ}}$ 
& $(1, 0)$     & {\scriptsize  $\Z/p\times \Z/2p \times \Z/6$} & $2^93^5p^4$
\\ \hline
$\xymatrix{\overset{-1}{\circ} \ar  @{-}[r]^{\omega^2 }  & \overset{\omega}{\circ}}$  
& $(1, 0)$  & {\scriptsize  $\Z/p\times \Z/2p \times \Z/3$} & $2^53^3p^4$
\\ \cline{2-4}
  &  $(0, 1)$   & {\scriptsize  $\Z/p\times \Z/6 \times \Z/3p$} & $2^83^6p^4$
\\ \hline
$\xymatrix{\overset{-1}{\circ} \ar  @{-}[r]^{r^{-1}}  & \overset{r}{\circ}}$, $r \in \G'_N, N > 3$  
&    $(1, 0)$    & {\scriptsize  $\Z/p\times \Z/2p \times \Z/N$} & $2^5N^3p^4$
\\ \hline
$\xymatrix{\overset{-1}{\circ} \ar  @{-}[r]^{\omega}  & \overset{\omega^2}{\circ} \ar  @{-}[r]^{\omega}  & \overset{\omega^2}{\circ} }$ 
& $(1,0, 0)$ & {\scriptsize  $\Z/p\times \Z/2p \times \Z/3 \times \Z/3$} & $2^93^{11}p^4$
\\ \hline
$\xymatrix{\overset{-1}{\circ} \ar  @{-}[r]^{\omega}  & \overset{\omega^2}{\circ} \ar  @{-}[r]^{\omega^2}  & \overset{\omega}{\circ} }$ 
& $(1,0, 0)$ & {\scriptsize  $\Z/p\times \Z/2p \times \Z/3 \times \Z/3$} & $2^93^{11}p^4$
\\ \hline
\end{tabular}
\end{center}

\end{table}

\section{Several blocks, one point}\label{section:YD>3-severalblocks-1pt-poseidon}

Let $t\geq 2$ and $\theta = t +1$. We shall use  the following notation:
\begin{align*}
\Idd_k &= \{k, k + \tfrac{1}{2}\},& k&\in \I_t;
& \Idd &= \Idd_1 \cup \dots \cup \Idd_{t} \cup \{\theta\};
\end{align*}
The Poseidon braided vector space $\pos(\bq,\ghost)$ depends on a datum
\begin{itemize}
\item $\bq\in\ku^{\theta \times \theta}$ such that $\epsilon_i:=q_{ii}=\pm 1$, $q_{ij}q_{ji}=1$ for all $i\neq j \in \I_{\theta}$.
\item $\ghost=(\ghost_j)\in\N^{t}$, $(a_j)$ such that $\ghost_j = \begin{cases} -\sa_j, &\epsilon_j = 1, \\
\sa_j, &\epsilon_j = -1;\end{cases}$ cf. \eqref{eq:discrete-ghost};
\end{itemize}
it has a basis $(x_i)_{i\in\Idd}$ and braiding
\begin{align}\label{eq:braiding-several-blocks-1pt}
c(x_i\ot x_j) &= \left\{ \begin{array}{ll}
q_{ij} \, x_j\otimes x_i, & \lfloor i\rfloor \leq t, \, \lfloor i\rfloor \neq \lfloor j\rfloor, \\
\epsilon_j \, x_j\otimes x_i, & \lfloor i\rfloor =j \leq t, \\
(\epsilon_j \, x_j+x_{\lfloor j\rfloor})\otimes x_i, & \lfloor i\rfloor \leq t, \, j=\lfloor i\rfloor +\frac{1}{2}, \\
q_{\theta j} \, x_j\otimes x_{\theta}, & i = \theta, \, j\in\I_{\theta}, \\
q_{\theta j} \, (x_j+a_j x_{\lfloor j\rfloor})\otimes x_{\theta}, & i = \theta, \, j\notin\I_{\theta}.
\end{array} \right.
\end{align}
We shall use the elements 
\begin{align}\label{eq:several-blocks-yjn}
y_j^{\langle n\rangle} &:=\left\{ \begin{array}{ll} x_j x_{j+\frac{1}{2} \, j}^m, & n=2m+1 \text{ odd};
\\ x_{j+\frac{1}{2} \, j}^m, & n=2m \text{ even}. \end{array} \right. & & j\in\I_t,  n\in\N_0;
\\
\label{eq:defn-sch-several-blocks}
\sch_{\bn} &:= (\ad_c x_{\fudos})^{n_1} \dots (\ad_c x_{t+\frac{1}{2}})^{n_{t}} x_{\theta}, & & \bn = (n_1,\dots,n_{t})\in\N^t_0.
\end{align}

Let $\mathcal A := \{\bn \in\N^t_0: 0 \le \bn \le \ba= (|\sa_1|, \dots, |\sa_t|)\}$,
ordered lexicographically.
For $\bm, \bn \in \cA$ we set
\begin{align*}
\bp_{\bm,\bn} &:= \epsilon_{\theta}\prod_{i,j\in\I_t} q_{ij}^{m_in_j} q_{i\theta}^{m_i} q_{\theta j}^{n_j}
&
\epsilon_{\bn} &:=\bp_{\bn,\bn}
\end{align*}
We also need the following notation:
\begin{align*}
t_+ &:=\{ i\in\I_t: \epsilon_i=1 \}, & t_- &= t-t_+, \\
M_+ &:=\{ \bm\in\cA: \epsilon_{\bm}=1 \}, & M_- &= |\cA|-M_+. 
\end{align*}

The main result of this Section is the following.

\begin{prop} \label{pr:poseidon}
The algebra $\toba(\pos(\bq,\ghost))$ is presented by generators $x_i$, $i\in \Idd$, and relations
\begin{align}\label{eq:poseidon-defrels-Jordan}
&\begin{aligned}
&x_{i+\frac{1}{2} }^p =0, \quad x_i^p =0, 
\\
&x_{i+\frac{1}{2} }x_i -x_ix_{i+\frac{1}{2} }+\frac{1}{2}x_i^2 =0,
\end{aligned} & &i\in\I_t, \, \epsilon_{i}=1;
\\ \label{eq:poseidon-defrels-super-Jordan}
&\begin{aligned}
&x_i^2 = 0, \quad x_{i+\frac{1}{2} \, i}^{2p} =0, \quad x_{i+\frac{1}{2}}^p = 0,\\ 
&x_{i+\frac{1}{2} }x_{i+\frac{1}{2} \, i}- x_{i+\frac{1}{2} \, i}x_{i+\frac{1}{2} } - x_ix_{i+\frac{1}{2} \, i}=0,
\end{aligned} & i&\in\I_t, \, \epsilon_{i}=-1;
\\\label{eq:poseidon-defrels-blocks-commute}
&x_ix_j = q_{ij} \, x_jx_i, & \lfloor i\rfloor &\neq \lfloor j\rfloor  \in\I_t;
\\ \label{eq:poseidon-defrels-q-commute}
&x_ix_{\theta} = q_{i \theta} \, x_{\theta} x_i, & i& \in\I_t;
\\ \label{eq:poseidon-defrels-q-Serre}
&(\ad_c x_{i+\frac{1}{2}})^{1+|\sa_i|}(x_{\theta})=0, & i& \in\I_t,
\\
\label{eq:poseidon-rels-K-1}
&\sch_{\bm}\sch_{\bn} = \bp_{\bm,\bn} \, \sch_{\bn}\sch_{\bm}  &
\bm & \neq \bn \in \cA;
\\ \label{eq:poseidon-rels-K-2}
&\sch_{\bn}^2 =0, & \bn \in \cA, \, \epsilon_{\bn} &= -1,
\\ \label{eq:poseidon-rels-K-3}
&\sch_{\bn}^p =0, & \bn \in \cA, \, \epsilon_{\bn} &= 1.
\end{align}
A basis of $\toba(\pos(\bq,\ghost))$ is given by
\begin{align*}
B =\big\{ y_1^{\langle m_1\rangle} x_{\fudos}^{m_2} \dots y_t^{\langle m_{2t-1} \rangle} x_{t+\frac{1}{2}}^{m_{2t}} \prod_{\bn \in \cA} \sch_{\bn}^{b_{\bn}}: \, &0\le b_{\bn}< 2 \mbox{ if }\epsilon_{\bn}=-1, \\
& 0\le b_{\bn}, m_i < p \mbox{ if }\epsilon_{\bn}=1, \, i\in \I_t \big\}.
\end{align*}
Hence $\dim \toba(\pos(\bq,\ghost)) = 2^{2t_{-} + M_{-}} p^{2t + M_{+}}$.
\qed
\end{prop}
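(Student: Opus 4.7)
The plan is to extend the bosonization strategy used for a single block and a single point in \S\ref{sec:yd-dim3}. Let $W = V_1 \oplus \cdots \oplus V_t$ denote the span of all block generators; since $q_{ij}q_{ji} = 1$ for blocks $V_i, V_j$ with $i \ne j$, the Nichols algebra $\NA(W)$ is the iterated braided tensor product of the individual block Nichols algebras described in Lemma \ref{prop:1block} and Proposition \ref{prop:-1block}. Setting $K = \NA(V)^{\mathrm{co}\,\NA(W)}$, by \cite[Proposition 8.6]{HS} we have $\NA(V) \simeq K \# \NA(W)$ and $K \simeq \NA(K^1)$, where $K^1 = \ad_c \NA(W)(x_\theta)$. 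The core of the argument is to show that $K^1$ has basis $(\sch_{\bn})_{\bn \in \cA}$ and is of diagonal type with braiding matrix $(\bp_{\bm,\bn})$.

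I begin by verifying that the stated relations hold in $\NA(V)$. The block-internal relations \eqref{eq:poseidon-defrels-Jordan} and \eqref{eq:poseidon-defrels-super-Jordan} are immediate from Lemma \ref{prop:1block} and Proposition \ref{prop:-1block}. The cross-block and block-point $q$-commutations \eqref{eq:poseidon-defrels-blocks-commute} and \eqref{eq:poseidon-defrels-q-commute} follow by checking that the corresponding skew commutators are annihilated by all skew derivations $\partial_j$, using \eqref{eq:skewderivations} together with $q_{ij}q_{ji}=1$. The $q$-Serre relations \eqref{eq:poseidon-defrels-q-Serre} reduce to the rank-two analysis: for each fixed $i$, the subspace $V_i \oplus \ku x_\theta$ is exactly of the form treated in \S\ref{sec:yd-dim3}, and Lemma \ref{lemma:derivations-zn} gives vanishing via $\mu_{1+|\sa_i|}=0$.

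Next, from \eqref{eq:poseidon-defrels-q-commute} one obtains $\ad_c x_i(x_\theta) = 0$ for $i \in \I_t$, and a direct computation as in Lemma \ref{le:-1bpz} shows that the operators $\ad_c x_{i+\frac{1}{2}}$ for distinct $i$ commute on $x_\theta$ up to scalars. Consequently $K^1$ is spanned by $\{\sch_{\bn} : \bn \in \cA\}$, with the upper bound on $\bn$ coming from \eqref{eq:poseidon-defrels-q-Serre}. Linear independence is proved by inductively computing $\partial_\theta(\sch_{\bn})$, generalizing Lemma \ref{lemma:derivations-zn}: the result is a nonzero scalar multiple of the product $\prod_i y_i^{\langle n_i\rangle}$, and these top-block monomials are linearly independent by the block PBW bases. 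The braiding on $K^1$ is then computed directly from the $\Gamma$-grading and $\Gamma$-action, yielding the diagonal braiding $c(\sch_{\bm} \otimes \sch_{\bn}) = \bp_{\bm,\bn}\, \sch_{\bn} \otimes \sch_{\bm}$.

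Since each $\bp_{\bn,\bn} = \epsilon_{\bn} \in \{\pm 1\}$, the Dynkin diagram of $K^1$ is totally disconnected, and $\NA(K^1)$ decomposes as the braided tensor product of the one-dimensional Nichols algebras $\NA(\ku\sch_{\bn})$, each of dimension $2$ or $p$ according as $\epsilon_{\bn}=-1$ or $1$ by Example \ref{exa:dim1}. This yields the relations \eqref{eq:poseidon-rels-K-1}--\eqref{eq:poseidon-rels-K-3}, the PBW basis $B$ via $\NA(V) \simeq K \# \NA(W)$, and the dimension formula $2^{2t_- + M_-} p^{2t + M_+}$. The main obstacle will be the simultaneous $\mu$-factor analysis when several blocks act on $x_\theta$: one must show that no scalar factor in $\partial_\theta(\sch_{\bn})$ vanishes throughout the range $\bn \in \cA$, which requires extending the rank-two computation behind \eqref{eq:def-mu-n} to a product of $\mu$-sequences indexed by the blocks and tracking the cutoff $n_i \le |\sa_i|$ for each $i$.
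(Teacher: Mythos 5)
The paper prints no proof of this Proposition (it is stated with \qed, the argument being deferred to the analogous characteristic-zero case in \cite{aah-triang} and to the method of \S\ref{sec:yd-dim3}), and your outline reconstructs exactly the intended strategy: split off the coinvariants $K$ of the braided tensor product $\NA(W)$ of the block Nichols algebras via \cite[Proposition 8.6]{HS}, identify $K^1$ as a diagonal braided vector space with totally disconnected Dynkin diagram in the basis $(\sch_{\bn})_{\bn\in\cA}$, and multiply dimensions. The two computations that make the last step work are the ones you indicate: $\bp_{\bm,\bn}\bp_{\bn,\bm}=1$ for $\bm\neq\bn$ and $\epsilon_{\bn}=\epsilon_\theta\prod_i\epsilon_i^{n_i}\in\{\pm1\}$, both consequences of $q_{ij}q_{ji}=1$ and $\epsilon_i^2=1$; and the nonvanishing of $\partial_\theta(\sch_{\bn})$, which, because the blocks pairwise $q$-commute, factors (up to a nonzero scalar) as $\bigl(\prod_{i\in\I_t}\mu^{(i)}_{n_i}\bigr)\prod_{i\in\I_t}y_i^{\langle n_i\rangle}$ with each $\mu^{(i)}_{n_i}\neq0$ for $n_i\le|\sa_i|$ by the rank-two analysis of Lemma \ref{lemma:derivations-zn}. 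So the ``main obstacle'' you flag is genuinely just the tensor-product bookkeeping.

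The one step your outline omits is the second half of the presentation claim. You verify that the listed relations hold in $\NA(\pos(\bq,\ghost))$ and that $B$ is a basis of the Nichols algebra, but to conclude that \eqref{eq:poseidon-defrels-Jordan}--\eqref{eq:poseidon-rels-K-3} actually generate the defining ideal you must also show that the quotient $\cBt$ of $T(V)$ by these relations is spanned by $B$ --- for instance by checking that the span of $B$ is a left ideal of $\cBt$ containing $1$, as in the proof of Proposition \ref{prop:-1block}. Only then does $\dim\cBt\le|B|=\dim\NA(V)$ force the canonical surjection $\cBt\to\NA(\pos(\bq,\ghost))$ to be an isomorphism. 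This step is routine given the commutation rules you already establish (it is exactly where relations \eqref{eq:poseidon-defrels-blocks-commute}, \eqref{eq:poseidon-defrels-q-commute} and \eqref{eq:poseidon-rels-K-1} earn their place in the list), but it is a necessary part of the argument and should be stated.
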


\subsection{Realizations}\label{subsec:realizations-blocks-pt}

Let $H$ be a Hopf algebra, $(g_i, \chi_i, \eta_i)$, $i\in \I_{t}$,
a family of YD-triples and $(g_{\theta}, \chi_{\theta})$ a YD-pair for $H$, see \S \ref{subsec:realizations-block}. 
Let $(V, c)$ be a braided vector space  with braiding 
\eqref{eq:braiding-several-blocks-1pt}.
Then  
\begin{align}
\cV := \Big(\oplus_{i\in \I_{t}} \cV_{g_i}(\chi_i, \eta_i)\Big) \oplus  \ku_{g_\theta}^{\chi_\theta}  \in \ydh
\end{align}
is a \emph{principal realization} of $(V, c)$  over  $H$ if
\begin{align*}
q_{ij}&= \chi_j(g_i),& &i, j\in \I_{\theta};& a_j&= q_{j1}^{-1}\eta(g_j), j\in \I_{t}.
\end{align*}
Thus $(V, c) \simeq \cV$ 
as braided vector space. Consequently, if $H$ is finite-dimensional,
then $\toba (\cV) \# H$ is a finite-dimensional Hopf algebra.

If $q_{ij}=1$ for $i\neq j$, then we may choose $H=\Bbbk \Gamma$, where 
\begin{align*}
\Gamma&=\Z/n_1 \times \dots \Z/n_\theta, &  
n_i & =\begin{cases} p & \text{if }q_{ii}=1, \\ 2p & \text{if }q_{ii}=-1.
\end{cases}
\end{align*}
Thus $\toba(\pos(\bq,\ghost))\# \Bbbk \Gamma$ is a Hopf algebra of dimension $2^{3t_{-} + M_{-}+\delta_{\epsilon_{\theta},-1}} p^{3t + M_{+}+1}$.

\section{A pale block and a point}\label{sec:paleblock}

Let 
$V$ be a braided vector space of dimension 3
with braiding given in the  basis $(x_i)_{i\in\I_3}$ by
\begin{align}\label{eq:braiding-paleblock-point}
(c(x_i \otimes x_j))_{i,j\in \I_3} &=
\begin{pmatrix}
\epsilon x_1 \otimes x_1&  \epsilon x_2  \otimes x_1& q_{12} x_3  \otimes x_1
\\
\epsilon x_1 \otimes x_2 & \epsilon x_2  \otimes x_2& q_{12} x_3  \otimes x_2
\\
q_{21} x_1 \otimes x_3 &  q_{21}(x_2 +  x_1) \otimes x_3& q_{22} x_3  \otimes x_3
\end{pmatrix}.
\end{align}
Let $V_1 = \langle x_1, x_2\rangle$, $V_2 = \langle x_3\rangle$. Let $\Gamma = \Z^2$ with a basis $g_1, g_2$.
We realize $V$ in $\ydG$ by  $V_1 = V_{g_1}$, $V_2 = V_{g_2}$, $g_1\cdot x_1 = \epsilon x_1$, $g_2\cdot x_1 = q_{21} x_1$,
$g_1\cdot x_2 = \epsilon x_2$, $g_2\cdot x_2 = q_{21} (x_2 +  x_1)$, $g_i\cdot x_3 = q_{i2} x_3$.

\smallbreak
As usual, let $\widetilde{q}_{12} = q_{12}q_{21}$; in particular the Dynkin diagram of the braided subspace $\langle x_1, x_3\rangle$ is  $\xymatrix{\overset{\epsilon}  {\circ} \ar  @{-}[r]^{\widetilde{q}_{12}}  & \overset{q_{22}}  {\circ} }$.

\medbreak

As for other cases, we consider
$K=\NA (V)^{\mathrm{co}\,\NA (V_1)}$;
then $K = \oplus_{n\ge 0} K^n$ inherits the grading of $\NA (V)$;
$\NA(V) \simeq K \# \NA (V_1)$ and $K$ is the Nichols algebra of
$K^1= \ad_c\NA (V_1) (V_2)$.
Now $K^1\in {}^{\NA (V_1)\# \Bbbk \Gamma}_{\NA (V_1)\# \Bbbk \Gamma}\mathcal{YD}$ with the adjoint action
and the coaction given by \eqref{eq:coaction-K^1}, i.e.
$\delta =(\pi _{\NA (V_1)\#  \Bbbk \Gamma}\otimes \id)\Delta _{\NA (V)\#  \Bbbk \Gamma}$.
Next we introduce $\sh_{m, n} = (\ad_{c} x_{1})^m(\ad_{c} x_{2})^n x_{3}$; we distinguish two cases:
\begin{align*}
w_m &= (\ad_{c} x_{1})^m x_{3} = \sh_{m,0},&
z_n &= (\ad_{c} x_{2})^n x_3 = \sh_{0, n}.
\end{align*}
By direct computation,
\begin{align}\label{eq:paleblock-1}
g_1\cdot &\sh_{m, n}  =  q_{12} \epsilon^{m+n} \sh_{m, n}, & g_2\cdot w_m  &=  q_{21}^{m}q_{22} w_m,
\\ \label{eq:paleblock-2}
z_{n+1} &= x_2z_n - q_{12}\epsilon^{n} z_n x_2,&
\sh_{m + 1, n} &= x_1\sh_{m, n} - q_{12}\epsilon^{m + n}\sh_{m, n} x_1,
\\ \label{eq:paleblock-3}
\partial_1(&\sh_{m, n}) =0, & \partial_2(\sh_{m, n}) &=0,
\\ \label{eq:paleblock-3.5}
&&\partial_3(w_{m}) &= \prod_{0\le j \le m-1}(1 - \epsilon^j \widetilde{q}_{12}) x_1^m.
\end{align}

\subsection{The block has $\epsilon = 1$}\label{subsubsec:paleblock-case1}
Here $\NA (V_1) \simeq S(V_1)$ is a polynomial algebra, so that $x_1$ and $x_2$ commute, and
\begin{align}\label{eq:paleblock-4}
(\ad_{c} x_{2})^s \sh_{m, n} &= \sh_{m, n + s} & &\text{for all } m,n,s\in\N_0.
\end{align}
Thus $\sh_{m, n}$, $m, n \in \N_0$ generate $K^1$. As in \cite[\S 8.1]{aah-triang}, we have that
\begin{align}\label{eq:paleblock-5}
g_2\cdot \sh_{m, n} &= q_{21}^{m+n}q_{22} \sum_{0\le j \le n} \binom{n}{j}  \sh_{m+j, n-j},
\end{align}

\smallbreak

For $q\in \Bbbk^{\times}$, let $\eny_{p}(q) = V$ be the braided vector space as in \eqref{eq:braiding-paleblock-point} under the assumptions that $\epsilon = 1$,
$q_{12} = q = q_{21}^{-1}$, $q_{22} = - 1$. We call $\NA(\eny_{p}(q))$ and the Nichols algebras $\NA(\eny_{\pm}(q))$, $\NA(\eny_{\star}(q))$ studied in Propositions \ref{prop:paleblock2}, \ref{prop:paleblock3} and \ref{prop:paleblock-case2a} the \emph{Endymion algebras}.

\begin{prop}\label{prop:paleblock1}
The algebra $\cB(\eny_p(q))$ is presented by generators $x_1,x_2, x_3$ and relations
\begin{align}\label{eq:endymion-p-1}
x_1^p&=0, \quad x_2^p=0, \quad x_1x_2 =x_2x_1,
\\ \label{eq:endymion-p-2}
x_1x_3 &= q x_3x_1,
\\ \label{eq:endymion-p-3} 
z_t^2 &=0, \qquad t\in\I_{0,p-1}.
\end{align}
The dimension of $\toba(\eny_p(q))$ is $2^{p}p^{2}$, since it has a PBW-basis
\begin{align*}
B=\{ x_1^{m_1} x_2^{m_2} z_{p-1}^{n_{p-1}} \dots z_{0}^{n_{0}}: n_i \in \{0,1\}, \, m_j \in\I_{0,p-1}\}.
\end{align*}
\end{prop}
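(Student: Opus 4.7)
The plan is to follow the strategy used throughout the paper (cf.\ Propositions \ref{pr:lstr-11disc} and \ref{pr:lstr1-1disc}): first verify the stated relations hold in $\NA(V)$ via skew-derivations; then identify $K^1$ and describe $K=\NA(K^1)$; finally, show the PBW set $B$ spans the quotient $\cBt$ of $T(V)$ by the stated relations, while independently establishing $\dim\NA(V)\ge |B|$.

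For the first step, \eqref{eq:endymion-p-1} just encodes $\NA(V_1)$: since $V_1$ is of diagonal type with $q_{11}=q_{22}=1$ and $q_{12}q_{21}=1$ (the pale block with $\epsilon=1$), Example \ref{exa:dim1} yields $\NA(V_1)\simeq \ku[x_1,x_2]/\langle x_1^p,x_2^p\rangle$. For \eqref{eq:endymion-p-2}, set $w_1=x_1x_3-qx_3x_1$; by \eqref{eq:paleblock-3}, $\partial_1(w_1)=\partial_2(w_1)=0$, and by \eqref{eq:paleblock-3.5}, $\partial_3(w_1)=(1-\widetilde{q}_{12})x_1=0$ since $\widetilde{q}_{12}=1$, so $w_1=0$ in $\NA(V)$. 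Relations \eqref{eq:endymion-p-3} will drop out of the analysis of $K^1$ below.

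For the second step, the vanishing of $w_1$ together with \eqref{eq:paleblock-4} forces $\sh_{m,n}=0$ for every $m\ge 1$, so $K^1$ is spanned by the elements $z_n$, $n\ge 0$. A direct induction using \eqref{eq:paleblock-2} (with $\epsilon=1$) yields the closed form
\[
z_n=\sum_{k=0}^{n}(-q)^{n-k}\binom{n}{k}x_2^{k}x_3x_2^{n-k}.
\]
For $n=p$, the interior binomial coefficients $\binom{p}{k}$, $0<k<p$, vanish in characteristic $p$ while $x_2^p=0$, so $z_p=0$ and $\dim K^1\le p$. Combining \eqref{eq:paleblock-1} with the collapse $\sh_{j,n-j}=0$ for $j\ge 1$ in the coaction formula \eqref{eq:paleblock-5}, one finds $g_1\cdot z_n=qz_n$ and $g_2\cdot z_m=-q^{-m}z_m$, so the braiding on $K^1$ is diagonal: $c(z_n\otimes z_m)=-q^{n-m}z_m\otimes z_n$, with diagonal entries equal to $-1$ and pairwise products $\mathfrak{q}_{nm}\mathfrak{q}_{mn}=1$. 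Hence $\NA(K^1)$ is a $q$-exterior algebra of dimension at most $2^p$, presented by $z_n^2=0$ (which yields \eqref{eq:endymion-p-3}) together with $z_nz_m=-q^{n-m}z_mz_n$ for $n\ne m$.

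To close the dimension count, the $z_n$, $0\le n<p$, are linearly independent in $\NA(V)$ (they carry distinct $\Gamma$-degrees $g_1^n g_2$, and nonvanishing can be verified by iterating $\partial_3$ or $\partial_2$), so $\dim K^1=p$, $\dim K=2^p$, and $\dim\NA(V)=\dim\NA(V_1)\cdot\dim K=2^p p^2=|B|$. The spanning of $\cBt$ by $B$ proceeds by a standard reduction: commute all $x_1$'s to the left using \eqref{eq:endymion-p-1} and \eqref{eq:endymion-p-2}; rewrite each $x_2x_3$ block via $x_2z_n=z_{n+1}+qz_nx_2$ to harvest $z_n$'s; reorder the $z_t$'s; and apply the $p$-power and square vanishings. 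The main obstacle is precisely the reordering step: one must verify that the $q$-commutations $z_nz_m=-q^{n-m}z_mz_n$ for $n\ne m$ are \emph{consequences} of the stated relations in $\cBt$ rather than additional impositions. The base case $z_0z_1=-q^{-1}z_1z_0$ is forced by $z_0^2=x_3^2=0$ together with $x_1x_3=qx_3x_1$, and the general case should follow by an inductive diamond-type argument bracketing with $x_1$ and $x_2$, in analogy with similar passages in \cite{aah-triang}.
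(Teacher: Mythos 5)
Your proposal is correct and follows essentially the same route as the paper: splitting $\NA(V)\simeq K\#\NA(V_1)$, showing $w_1=0$ collapses $K^1$ to the span of $z_0,\dots,z_{p-1}$, identifying the braiding as $p$ disconnected points labelled $-1$ so that $K=\Lambda(K^1)$ has dimension $2^p$, and deferring the derivation of the $q$-commutations among the $z_t$ from $z_t^2=0$ to the arguments of \cite{aah-triang}, exactly as the paper does. The only (harmless) variations are that you prove $z_p=0$ via the closed formula and $\binom{p}{k}\equiv 0$ rather than via $\partial_3(z_p)=(-1)^p x_1^p=0$, and note that nonvanishing of $z_n$ uses $\partial_3(z_n)=(-1)^n x_1^n\neq 0$ (not $\partial_2$, which annihilates all $z_n$).
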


\pf 
We claim that $\dim K^1=p$ and $K \simeq \Lambda (K^1)$. In fact, by \eqref{eq:paleblock-3}, \eqref{eq:paleblock-3.5}
and the hypothesis $\widetilde{q}_{12} = 1$,
$w_m =0$ for all $m >0$; thus $\sh_{m, n} =0$ for all $m >0$ by \eqref{eq:paleblock-4}. Using this fact and \eqref{eq:paleblock-5},
\begin{align}\label{eq:paleblock-5bis}
g_2\cdot z_{n} &= q_{21}^{n}q_{22}   z_{n}, & n&\in \N_0.
\end{align}
We have that  for all $n\in\N_0$:
\begin{align}
\label{eq:paleblock-6}
\partial_3(z_n) &= (-1)^n x_1^n,
\\\label{eq:paleblock-7}
\delta(z_n) &= \sum_{0\le j \le n} (-1)^{n+j} \binom{n}{j} x_1^{n-j} g_1^jg_2 \otimes z_j.
\end{align}
Indeed the proof follows as in \cite[Lemma 8.1.4]{aah-triang}. As $x_1^p=0$, we have $z_p=0$. Thus the set $(z_n)_{n \in \I_{0,p-1}}$ is a basis of $K^1$. The braiding is
\begin{align*}
c(z_n \otimes z_s) &=  \sum_{0\le j \le n} (-1)^{n+j} \binom{n}{j} \ad_c (x_1^{n-j} g_1^jg_2) z_s \otimes z_j
= q_{12}^{n}q_{21}^s q_{22}  z_s \otimes z_{n}.
\end{align*}
That is, $K^1$ is of diagonal type and the Dynkin diagram consists of disconnected $p$ points labeled with $-1$. Hence $\NA(K^1) = \Lambda(K^1)$. Moreover, $B$ is a basis of $\toba(V)$ since $\NA(V) \simeq K \# \NA (V_1)$.

The presentation follows as in \cite[Proposition 4.3.7]{aah-triang}.
\epf

\subsection{The block has $\epsilon = -1$}\label{subsubsec:paleblock-case-1}
Here $\NA (V_1) \simeq \Lambda(V_1)$ is an exterior algebra and consequently $\sh_{m, n}$, $m, n \in \{0,1\}$ generates $K^1$.
By direct computation,
\begin{align}
\label{eq:paleblock-8}
g_2 \cdot z_1 &=  q_{21}q_{22}(z_1 + w_1), \qquad \partial_3(z_1) = (1 - \widetilde{q}_{12}) x_2 - \widetilde{q}_{12} x_1,
\\ \label{eq:paleblock-9}
\delta(z_1) &= g_1g_2 \otimes z_1 + \big((1 - \widetilde{q}_{12}) x_2 - \widetilde{q}_{12} x_1\big)g_2 \otimes x_3.
\end{align}

\subsubsection{Case 1: $\widetilde{q}_{12} = 1$} Here
$w_1 =0$  by \eqref{eq:paleblock-3} and \eqref{eq:paleblock-3.5}, so $$ \sh_{1, 1} = -(\ad_{c} x_{2}) w_1  =0.$$
Thus $z_0 = x_3$ and $z_1$ form a basis of $K^1$ and the braiding of $K^1$ is given by
\begin{align}
\label{eq:paleblock-10}
\begin{aligned}
c(x_3 \ot x_3) &= q_{22}x_3 \ot x_3, &  c(x_3 \ot z_1) &= q_{21}q_{22} z_1 \otimes x_3,\\
c(z_1\ot x_3) &= q_{12}q_{22} x_3 \otimes z_1, &  c(z_1 \ot z_1) &=  -q_{22} z_1 \ot z_1.
\end{aligned}
\end{align}
That is, $K^1$ is of diagonal type with Dynkin diagram $\xymatrix{ \overset{q_{22}}{\circ} \ar  @{-}[r]^{q_{22}^2} & \overset{-q_{22}}{\circ} }$.

\smallbreak
For $q\in \Bbbk^{\times}$, let $\eny_{\pm}(q) = V$ be the braided vector space as in \eqref{eq:braiding-paleblock-point} under the assumptions that $\epsilon = -1$,
$q_{12} = q = q_{21}^{-1}$, $q_{22} = \pm 1$.

\begin{prop}\label{prop:paleblock2}
The algebra $\cB(\eny_+(q))$ is presented by generators $x_1,x_2, x_3$ and relations
\begin{align}\label{eq:endymion-1}
x_1^2&=0, \quad x_2^2=0, \quad x_1x_2 =- x_2x_1,
\\ \label{eq:endymion-2}
(x_2x_3 - q x_3x_2)^2&=0, \quad x_3^p=0, 
\\ \label{eq:endymion-3}  x_3(x_2x_3 - q x_3x_2) &=q^{-1} (x_2x_3 - q x_3x_2)x_3,
\\ \label{eq:endymion-4}
x_1x_3 &= q x_3x_1.
\end{align}
Let $z_1 = x_2x_3 - q x_3x_2$. Then $\cB(\eny_+(q))$ has a PBW-basis
\begin{align*}
B=\{ x_1^{m_1} x_2^{m_2} x_3^{m_3}z_{1}^{m_{4}}: m_1, m_2, m_4 \in \{0,1\}, \, m_3 \in\I_{0,p-1}\};
\end{align*}
hence $\dim \cB(\eny_+(q)) = 2^3p$.
\end{prop}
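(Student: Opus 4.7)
The plan is to follow the template used for $\NA(\eny_p(q))$ in Proposition \ref{prop:paleblock1} and read off the structure of $\NA(\eny_+(q))$ from the bosonization $\NA(V) \simeq K \# \NA(V_1)$. Since $\epsilon = -1$, the braiding \eqref{eq:braiding-paleblock-point} restricted to $V_1 = \langle x_1, x_2 \rangle$ is diagonal with all four entries equal to $-1$; hence $\NA(V_1) \simeq \Lambda(V_1)$ is the exterior algebra presented by the relations in \eqref{eq:endymion-1}, of dimension $4$ and basis $\{x_1^{m_1} x_2^{m_2} : m_i \in \{0, 1\}\}$.

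Next I would identify $K = \NA(K^1)$. The setup preceding the statement already showed, under $\widetilde{q}_{12} = 1$, that $K^1$ is two-dimensional with basis $(z_0 = x_3,\, z_1)$ and diagonal braiding \eqref{eq:paleblock-10}. Specialising to $q_{22} = 1$ yields the generalised Dynkin diagram $\xymatrix{\overset{1}{\circ} \ar @{-}[r]^{1} & \overset{-1}{\circ}}$, i.e.\ two disjoint vertices labelled $1$ and $-1$ with trivial quantum-Serre interaction $z_0 z_1 = q_{21} z_1 z_0$. Hence $K$ is presented by $x_3^p$, $z_1^2$, and the commutation \eqref{eq:endymion-3}, and $\dim K = 2p$. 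The crossing relation \eqref{eq:endymion-4}, equivalently $w_1 = (\ad_c x_1) x_3 = 0$, holds in $\NA(V)$ since $\partial_1(w_1) = \partial_2(w_1) = 0$ by \eqref{eq:paleblock-3} and $\partial_3(w_1) = (1 - \widetilde{q}_{12}) x_1 = 0$ by \eqref{eq:paleblock-3.5} (as $\widetilde{q}_{12}=1$); this is also precisely what forces $K^1$ to be spanned by $z_0$ and $z_1$ alone and not by any higher $\sh_{m,n}$.

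Multiplying, $\dim \NA(V) = \dim K \cdot \dim \NA(V_1) = 2p \cdot 4 = 2^3 p$, matching $|B|$. The main remaining step, and the only real technical obstacle, is to check that the quotient $\widetilde{\cB}$ of $T(V)$ by \eqref{eq:endymion-1}--\eqref{eq:endymion-4} is already spanned by $B$. For this I would use the exterior-algebra relations \eqref{eq:endymion-1} to order and truncate the $x_1, x_2$ letters, the crossing relation \eqref{eq:endymion-4} to move $x_1$ past $x_3$, the definition $z_1 = x_2 x_3 - q x_3 x_2$ together with \eqref{eq:endymion-3} to convert any product of $x_2$ and $x_3$ letters into ordered monomials in $\{x_2, x_3, z_1\}$, and finally $x_3^p = z_1^2 = 0$ to bound exponents; an induction on total degree then yields the spanning. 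Since $\widetilde{\cB}$ surjects onto $\NA(V)$ whose dimension equals $|B| = 2^3 p$, the surjection is an isomorphism and $B$ is the claimed PBW basis.
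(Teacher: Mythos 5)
Your proposal is correct and follows essentially the same route as the paper: identify $\NA(V_1)\simeq\Lambda(V_1)$, use the setup of \S\ref{subsubsec:paleblock-case-1} to see that $K^1=\langle z_0,z_1\rangle$ is of diagonal type with disconnected diagram (vertices labelled $1$ and $-1$, whence $x_3^p=0$ and $z_1^2=0$), and conclude via the bosonization $\NA(V)\simeq K\#\NA(V_1)$ together with a spanning argument for the quotient by the stated relations. The only difference is presentational: the paper delegates the relations-and-spanning step to \cite[Proposition 8.1.6]{aah-triang}, while you sketch it directly; both are fine.
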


\pf 
Notice that $x_3^p=0$ since $x_3$ is a point labeled with $q_{22}=1$ in $K^1$. Also,
$B$ is a basis thanks to the isomorphism $\toba(\eny_+(q)) \simeq \toba(K^1) \# \toba(V_1)$.
The rest of the proof follows as in \cite[Proposition 8.1.6]{aah-triang}.
\epf

\begin{prop}\label{prop:paleblock3}
The algebra $\cB(\eny_-(q))$ is presented by generators $x_1,x_2, x_3$ and relations \eqref{eq:endymion-1}, \eqref{eq:endymion-4},
\begin{align}
\label{eq:endymion-2b}
x_3 ^2&=0, \quad (x_2x_3 - q x_3x_2)^p=0,
\\ \label{eq:endymion-3b}  x_3(x_2x_3 - q x_3x_2) &= -q^{-1} (x_2x_3 - q x_3x_2)x_3.
\end{align}
Let $z_1 = x_2x_3 - q x_3x_2$. Then $\cB(\eny_-(q))$ has a PBW-basis
\begin{align*}
B=\{ x_1^{m_1} x_2^{m_2} x_3^{m_3}z_{1}^{m_{4}}: m_1, m_2, m_3 \in \{0,1\}, \, m_4 \in\I_{0,p-1}\};
\end{align*}
hence $\dim \cB(\eny_-(q)) = 2^3p$.
\end{prop}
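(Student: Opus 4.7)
The plan is to follow the template of Proposition \ref{prop:paleblock2}, with the data $\epsilon = -1$, $\widetilde{q}_{12} = 1$, $q_{22} = -1$ replacing $\epsilon = -1$, $\widetilde{q}_{12} = 1$, $q_{22} = 1$. The computations of \S\ref{subsubsec:paleblock-case-1}, Case 1, already identify $K^1$ as the $2$-dimensional braided vector space with basis $\{x_3, z_1\}$ and braiding \eqref{eq:paleblock-10}. Specializing to $q_{22}=-1$, the two diagonal scalars become $q_{22} = -1$ (on $x_3$) and $-q_{22} = 1$ (on $z_1$), while $q_{22}^2 = 1$ forces the Dynkin diagram to be disconnected. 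Hence by Example \ref{exa:dim1}, $K = \NA(K^1)$ is presented by $x_3^2 = 0$, $z_1^p = 0$, and the $q$-commutation $z_1 x_3 = -q\, x_3 z_1$, read off from $c(z_1 \otimes x_3) = -q\,x_3\otimes z_1$; equivalently, $x_3 z_1 = -q^{-1} z_1 x_3$. In particular $\dim K = 2p$.

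Next I would verify that the listed relations actually hold in $\NA(V)$. Relations \eqref{eq:endymion-1} follow from $\NA(V_1) = \Lambda(V_1)$, since the braiding restricted to $V_1$ is the scalar $\epsilon = -1$. Relation \eqref{eq:endymion-4} holds because $\langle x_1, x_3\rangle$ is a braided subspace of diagonal type with $q_{12}q_{21} = 1$ (so its Dynkin diagram is disconnected); a direct check using the skew derivations $\partial_i$ of \eqref{eq:skewderivations} also confirms that $\partial_i(x_1 x_3 - q x_3 x_1) = 0$ for $i\in\I_3$. Relations \eqref{eq:endymion-2b} and \eqref{eq:endymion-3b} are exactly the translations to $T(V)$ of the three defining relations of $K$ computed above, via $z_1 = x_2 x_3 - q x_3 x_2$.

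For the presentation and PBW basis, let $\widetilde{\cB}$ denote the quotient of $T(V)$ by \eqref{eq:endymion-1}, \eqref{eq:endymion-4}, \eqref{eq:endymion-2b} and \eqref{eq:endymion-3b}; by the previous step there is a canonical epimorphism $\widetilde{\cB} \twoheadrightarrow \NA(\eny_-(q))$. Using these relations together with the easily derived identities $x_3 x_2 = q^{-1} x_2 x_3 - q^{-1} z_1$ and $z_1 x_i = -q^{-1} x_i z_1$ for $i\in\I_2$, one reorders any monomial into a linear combination of elements of $B$; hence $\dim \widetilde{\cB} \le |B| = 2^3 p$. Conversely, the isomorphism $\NA(V) \simeq K \,\#\, \NA(V_1)$, combined with the PBW bases $\{x_3^{m_3} z_1^{m_4}\}$ of $K$ and $\{x_1^{m_1} x_2^{m_2}\}$ of $\Lambda(V_1)$, shows that $B$ is linearly independent in $\NA(V)$ and has exactly $2^3 p$ elements, so $\dim \cB(\eny_-(q)) \ge 2^3 p$. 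Both bounds then force equality, proving the presentation, the PBW basis, and $\dim \cB(\eny_-(q)) = 2^3 p$. The main obstacle is the reordering step in $\widetilde{\cB}$, which is entirely routine (a diamond-lemma argument parallel to \cite[Proposition 8.1.6]{aah-triang}) but accounts for essentially all the bookkeeping.
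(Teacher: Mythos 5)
Your proposal is correct and follows essentially the same route as the paper: the paper likewise reads off from the Case 1 analysis of \S\ref{subsubsec:paleblock-case-1} that $K^1$ is of diagonal type with disconnected diagram (labels $-1$ on $x_3$ and $1$ on $z_1$, whence $x_3^2=0$ and $z_1^p=0$), obtains the basis from $\NA(\eny_-(q))\simeq \toba(K^1)\#\toba(V_1)$, and delegates the remaining presentation bookkeeping to the analogue of \cite[Proposition 8.1.6]{aah-triang}. Your explicit two-sided bound (spanning via the reordering identities versus the dimension count $\dim K\cdot\dim\Lambda(V_1)=2p\cdot 4=2^3p$) is exactly the argument being invoked there.
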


\pf
Notice that $z_1^p=0$ since $z_1$ is a point labeled with $1$ in $K^1$. Also,
$B$ is a basis thanks to the isomorphism $\toba(\eny_-(q)) \simeq \toba(K^1) \# \toba(V_1)$.
The rest of the proof follows as in \cite[Proposition 8.1.6]{aah-triang}.
\epf

\subsubsection{Case 2: $\widetilde{q}_{12}=-1$}
We consider now a fixed choice of $q_{22}$ and $\widetilde{q}_{12}$, which is the corresponding one to the example of finite $\GK$ over a field of characteristic 0.

For $q\in \Bbbk^{\times}$, let $\eny_{\star}(q) = V$ be the braided vector space as in \eqref{eq:braiding-paleblock-point} under the assumptions that
$\epsilon = -1$, $q_{22}=-1$, $q_{12}=q$, $q_{21}=-q^{-1}$.  Recall \eqref{eq:xijk}.

\begin{prop}\label{prop:paleblock-case2a}
The algebra $\cB(\eny_{\star}(q))$ is presented by generators $x_1,x_2, x_3$ and relations \eqref{eq:endymion-1},
\begin{align}\label{eq:endymion-6}
x_3^2=0, \quad x_{31}^2&=0,
\\ \label{eq:endymion-7}
x_2 [x_{23},x_{13}]_c- q^2 [x_{23},x_{13}]_cx_2 &=q \, x_{13}x_{213},
\\ \label{eq:endymion-8}
x_{23}^{2p}=0, \quad [x_{23},x_{13}]_c^p=0, \quad x_{213}^2 &= 0.
\end{align}
Moreover $\cB(\eny_{\star}(q))$ has a PBW-basis
\begin{multline*}
B=\{ x_2^{m_1}x_{23}^{m_2} x_{213}^{m_3}[x_{23},x_{13}]_c^{m_4} x_1^{m_5}x_{13}^{m_6} x_3^{m_7} : \\
m_1, m_3, m_5, m_6, m_7 \in \{0,1\}, \, m_2 \in\I_{0, 2p-1}, m_4 \in\I_{0,p-1}\};
\end{multline*}
hence $\dim \cB(\eny_{\star}(q)) = 2^6p^2$.
\end{prop}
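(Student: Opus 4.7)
The plan follows the strategy used in Propositions~\ref{prop:paleblock1}--\ref{prop:paleblock3}: set $K=\NA(V)^{\mathrm{co}\,\NA(V_1)}$ so that $\NA(V)\simeq K\#\NA(V_1)$ and $K\simeq \NA(K^1)$ with $K^1=\ad_c\NA(V_1)(V_2)$. Since $\epsilon=-1$ yields a diagonal braiding on $V_1$ with matrix $(-1)_{ij}$, $\NA(V_1)=\Lambda(V_1)$ is the exterior algebra of dimension $4$, and this immediately gives the relations~\eqref{eq:endymion-1}.

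First I would pin down $K^1$. With $\epsilon=\tilde{q}_{12}=-1$, formula~\eqref{eq:paleblock-3.5} gives $\partial_3(w_m)=0$ for $m\ge 2$, which combined with~\eqref{eq:paleblock-3} forces $w_m=0$ for $m\ge 2$. A direct computation using $x_2^2=0$ in $\Lambda(V_1)$ shows $(\ad_c x_2)(x_{23})=0$, and similarly $\sh_{m,n}=0$ whenever $m\ge 2$ or $n\ge 2$. Hence $K^1$ is spanned by $x_3,\ x_{13},\ x_{23},\ x_{213}$; linear independence follows from computations such as $\partial_3(x_{213})=4\,x_1x_2\neq 0$ (using $p>2$), so $\dim K^1=4$.

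Second I would determine $\NA(K^1)$. The Yetter--Drinfeld structure of $K^1$ is read off from~\eqref{eq:paleblock-1}, \eqref{eq:paleblock-7}, \eqref{eq:paleblock-9} together with the analogues for $x_{13}$ and $x_{213}$; the braiding is triangular but not diagonal. Taking the associated graded with respect to a suitable flag of Yetter--Drinfeld submodules, one obtains a braided vector space $K^1_{\mathrm{diag}}$ of diagonal type whose Dynkin diagram should be matched with an entry of \cite{H-classif} whose Nichols algebra has dimension $2^4p^2$. The orders of the ``point'' generators in that diagram yield~\eqref{eq:endymion-6} and~\eqref{eq:endymion-8} (in particular, the length-$2$ root vector $[x_{23},x_{13}]_c$ is of order $p$). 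Relation~\eqref{eq:endymion-7} is the non-graded lift of a diagonal quantum Serre-type relation: the right-hand side $q\,x_{13}x_{213}$ is the correction coming from the off-diagonal component of the braiding on $K^1$, and can be verified directly by applying the skew derivations $\partial_1,\partial_2,\partial_3$ from~\eqref{eq:skewderivations}.

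Third I would confirm the PBW basis. Let $\cBt$ denote the quotient of $T(V)$ by~\eqref{eq:endymion-1}, \eqref{eq:endymion-6}, \eqref{eq:endymion-7}, \eqref{eq:endymion-8}. Using these relations to reorder monomials (moving $x_1,x_2$ past the $x_3$-bearing generators via the quantum-commutation rules extracted in Step~2), one shows that $B$ spans $\cBt$, so $\dim\cBt\le|B|=2^6p^2$. On the other hand $\dim\NA(V)=\dim K\cdot\dim\NA(V_1)=2^4p^2\cdot 4=2^6p^2$ by Steps~1--2, so the surjection $\cBt\twoheadrightarrow\NA(V)$ is an isomorphism and $B$ is a basis. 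The main obstacle is Step~2: since $K^1$ is not of diagonal type, the classification \cite{H-classif} applies only to the graded $K^1_{\mathrm{diag}}$, and one must check that the off-diagonal corrections in $K^1$ produce precisely the deformation appearing in~\eqref{eq:endymion-7} without introducing extra relations or altering the dimension bound.
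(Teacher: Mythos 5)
Your overall architecture (the decomposition $\NA(V)\simeq K\#\NA(V_1)$ with $K\simeq\NA(K^1)$, $\dim K^1=4$, and the final dimension count $2^4p^2\cdot 4=2^6p^2$) matches the paper's, and your identification of $K^1$ and of the relations \eqref{eq:endymion-1}, \eqref{eq:endymion-6} is fine. But the heart of the matter is establishing that the relations \eqref{eq:endymion-7} and \eqref{eq:endymion-8} actually \emph{hold} in $\NA(V)$, and your Step~2 does not do this. The route you propose --- degenerate $K^1$ to a diagonal braided vector space $K^1_{\mathrm{diag}}$, locate it in \cite{H-classif}, and read the orders of the root vectors off that diagram --- cannot supply these relations: the filtration argument only yields $\dim\NA(K^1_{\mathrm{diag}})\le\dim\NA(K^1)$ (the associated graded of $\NA(K^1)$ is merely a \emph{pre}-Nichols algebra of $K^1_{\mathrm{diag}}$, cf.\ \eqref{eq:gr br Hopf algebra}), so matching the diagonal degeneration with a $2^4p^2$-dimensional entry of the classification gives a lower bound on $\dim K$, not the upper bound you need, and it says nothing about which powers of $x_{23}$, $[x_{23},x_{13}]_c$, $x_{213}$ vanish in the genuinely non-diagonal $\NA(K^1)$. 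You flag this as ``the main obstacle'' but leave it unresolved, and it is precisely where all the work lies.

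The paper closes this gap by brute force with skew derivations: each relation in \eqref{eq:endymion-8} is shown to be annihilated by $\partial_1,\partial_2,\partial_3$. For $x_{213}^2$ and $[x_{23},x_{13}]_c^p$ this is a short computation using a list of quantum commutation identities among $x_1,x_2,x_3,x_{13},x_{23},x_{213},[x_{23},x_{13}]_c$ (\eqref{eq:endymion-aux-1}--\eqref{eq:endymion-aux-6}), but for $x_{23}^{2p}=0$ it is substantially harder: one computes $\partial_3(x_{23}^2)=q^{-1}x_{213}-x_{13}(2x_2+x_1)$, shows that this element skew-commutes correctly past $x_{23}^2+u$ where $u=[x_{23},x_{13}]_c$, and then observes that $a=x_{23}^2+u$, $b=u$ satisfy $ba=a(a+b)$, so the recursion \eqref{eq:a-b-formulae} established in the proof of Proposition \ref{prop:-1block} for the super Jordan plane applies and produces the factor $p$ that kills $\partial_3(x_{23}^{2p})$. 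None of this is visible from the diagonal degeneration, and without it neither the relation $x_{23}^{2p}=0$ nor the upper bound $\dim\cBt\le 2^6p^2$ in your Step~3 is justified (the spanning argument for $B$ needs exactly these nilpotency degrees). So the proposal, as it stands, has a genuine gap at its central step.
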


\pf Relations \eqref{eq:endymion-1} are 0 in $\cB(\eny_{\star}(q))$ because $\NA (V_1) \simeq \Lambda(V_1)$; \eqref{eq:endymion-6}
are 0 since $x_1$, $x_3$ generate a Nichols algebra of Cartan type $A_2$ at $-1$.

Notice that $[x_{23},x_{13}]_c=x_{23}x_{13}+x_{13}x_{23}$. By \eqref{eq:endymion-1} and \eqref{eq:endymion-6},
\begin{align}\label{eq:endymion-aux-1}
x_2x_{23}&=-q \, x_{23}x_2, & x_2x_{213}&=q \, x_{213}x_2,
\\ \label{eq:endymion-aux-2}
x_{23}x_3&=-q \, x_3x_{23}, & x_{23}x_1&=-q^{-1}x_1x_{23}-q^{-1}x_{213},
\\ \label{eq:endymion-aux-3}
x_{213}x_1&=q^{-1}x_1x_{213}, & [x_{23},x_{13}]_cx_1&=-q^{-2} x_1[x_{23},x_{13}]_c,
\\ \label{eq:endymion-aux-4}
x_1x_{13}&=-q \, x_{13}x_1, & x_{213}x_3&=q[x_{23},x_{13}]_c-q^2x_3x_{213},
\\ \label{eq:endymion-aux-5}
x_{13}x_3&=-q \, x_3x_{13}, & x_{13}[x_{23},x_{13}]_c &=[x_{23},x_{13}]_c x_{13},
\\ \label{eq:endymion-aux-6}
x_{213}x_{13}&=q \, x_{13}x_{213}, & [x_{23},x_{13}]_c x_3 &=q^2 x_3 [x_{23},x_{13}]_c.
\end{align}
As in \cite[Proposition 8.1.8]{aah-triang} we check that
\begin{align*}
\partial_3(x_{213})&=4x_2x_1 \neq 0, & \partial_3 ([x_{23},x_{13}]_c)&=2q^{-1}x_1x_{13} \neq 0.
\end{align*}

Now we prove that \eqref{eq:endymion-8} holds in $\toba(V)$.
We check that $\partial_i$ annihilates these terms
for $i=1,2,3$.
To simplify the notation, let $u=[x_{23},x_{13}]_c$.
As $\partial_1$, $\partial_2$ annihilate $x_{23}$, $x_{213}$ and $u$, it remains the case $i=3$. 
Using \eqref{eq:endymion-aux-1}-\eqref{eq:endymion-aux-6},
\begin{align*}
\partial_3(x_{213}^2) 
&= 4 \big(q^{-2} x_2x_1 x_{213}+x_{213}x_2x_1 \big)=0,
\\
\partial_3(u^p) &
= 2q^{-1} \sum_{k=0}^{p-1} (-q)^{2+2k-p} u^k x_1x_{13}u^{p-1-k}
= 2q^{-1} p \, u^{p-1}x_1 x_{13}=0.
\end{align*}
For the remaining relation, we 
check that $\partial_3 (x_{23}^{2})=
q^{-1} x_{213}-x_{13} (2x_2+x_1)$ and the following equalities hold (using \eqref{eq:endymion-aux-1}-\eqref{eq:endymion-aux-6}):
\begin{align*}
x_{213}x_{23}^2 &= q(x_{23}^2+u)x_{213},
&
x_{213} u &= q^2 \, u x_{213},
\\
x_{13}x_{23}^2 &=ux_{13}+x_{23}^2x_{13},
&
x_{13} u &= u x_{13},
\\
x_{1}x_{23}^2 &=q^2x_{23}^2x_1 -qx_{13}x_{213},
&
x_{1} u &= q^2 \, u x_{1},
\\
x_{2}x_{23}^2 &= q^2 x_{23}^2x_{2},
&
x_{2} u &= q^2\,  u x_{2}+qx_{13}x_{213},
\end{align*}
Using the previous computations and $x_{13}^2=0$,
\begin{align*}
\big(q^{-1} & x_{213}-2x_{13}x_2-x_{13}x_1\big)(x_{23}^2+\,u) =
q(x_{23}^2+u)x_{213}
-2q^2 x_{13} x_{23}^2x_{2}
\\ & 
\quad -q^2 x_{13}x_{23}^2x_1
+q\, u x_{213}
-2q^2 x_{13} u x_{2}
-q^2 x_{13}u x_{1}
\\ 
&=
qx_{23}^2x_{213} +q \, u x_{213}
-2q^2 (ux_{13}+x_{23}^2x_{13}) x_{2}
\\ & 
\quad -q^2 (ux_{13}+x_{23}^2x_{13}) x_1
+q\, u x_{213}
-2q^2 \, u x_{13}x_{2}
-q^2 \, u x_{13}x_{1}
\\ & 
= q^2(x_{23}^2+2\,u) \big(q^{-1}x_{213}-2x_{13}x_2-x_{13}x_1\big).
\end{align*}
We apply this equality to compute:
\begin{align*}
\partial_3 & (x_{23}^{2p})  =
\sum_{k=0}^{p-1} q^{2k+2-2p} x_{23}^{2k} \partial_3(x_{23}^2) (x_{23}+x_{13})^{2p-2-2k}
\\
&= \sum_{k=0}^{p-1} q^{2k+2-2p} x_{23}^{2k} \big( q^{-1} x_{213}-x_{13} (2x_2+x_1) \big) (x_{23}^2+u)^{p-1-k}
\\
&= \sum_{k=0}^{p-1} x_{23}^{2k}(x_{23}^2+2u)^{p-1-k} \big( q^{-1} x_{213}-x_{13} (2x_2+x_1) \big)
\end{align*}
Using \eqref{eq:endymion-7}, \eqref{eq:endymion-aux-4} and \eqref{eq:endymion-aux-5} we get $u x_{23}^2 = (u+x_{23}^2) u$. Hence $a=x_{23}^2+u$ and $b=u$ satisfy the last equation of \eqref{eq:a-b-conditions}, so \eqref{eq:a-b-formulae} applies and we have
\begin{align*}
\partial_3 & (x_{23}^{2p}) = p(x_{23}^2+u)^{p-1} \big( q^{-1} x_{213}-x_{13} (2x_2+x_1) \big) =0.
\end{align*}

The rest of the proof follows as in \cite[Proposition 8.1.8]{aah-triang}.
\epf

\subsection{Realizations}\label{subsec:realizations-paleblock-pt}

Here we present a realization of a braided vector space as in \eqref{eq:braiding-paleblock-point} over a group algebra $H=\Bbbk\Gamma$, with $\Gamma$ a finite abelian group. We consider $V_1 = \langle x_1, x_2\rangle$, $V_2 = \langle x_3\rangle$. We realize $V$ in $\ydG$ by  $V_1 = V_{g_1}$, $V_2 = V_{g_2}$, $g_1\cdot x_1 = \epsilon x_1$, $g_2\cdot x_1 = q_{21} x_1$,
$g_1\cdot x_2 = \epsilon x_2$, $g_2\cdot x_2 = q_{21} (x_2 +  x_1)$, $g_i\cdot x_3 = q_{i2} x_3$. In all the cases $\Gamma$ will be a product of two cyclic groups, $g_1=(1,0)$, $g_2=(0,1)$. 
Examples of finite-dimensional pointed Hopf algebras 
$A=\toba \big(V_{g_1}\oplus  V_{g_2}\big) \# H$ 
are listed in Table \ref{tab:hopf-finitedim-paleblock-point}.

\renewcommand{\arraystretch}{1.4}

\begin{table}[ht]
\caption{{\small Pointed Hopf algebras $K$ from a pale block and a point}}\label{tab:hopf-finitedim-paleblock-point}
\begin{center}
\begin{tabular}{|c|c|c|c|c|}
\hline $V$ & $(\epsilon, \widetilde{q}_{12}, q_{22})$ & $\Gamma$  & $q_{12}$
& {\small $\dim A$}  \\
\hline
$\eny_{p}(q)$ & $(1,1,-1)$  
& {\scriptsize $\Z/p \times \Z/2p$ } & $1$ & $2^{p+1}p^{4}$
\\\hline
$\eny_{+}(q)$ & $(-1,1,1)$  
& {\scriptsize $\Z/2p \times \Z/p$ } & $1$ & $2^4p^3$
\\\hline
$\eny_{-}(q)$ & $(-1,1,-1)$  
& {\scriptsize $\Z/2p \times \Z/2p$ } & $1$ & $2^5p^3$
\\\hline
$\eny_{\star}(q)$ & $(-1,-1,-1)$  
& {\scriptsize $\Z/2p \times \Z/2p$ } & $\pm 1$ & $2^8 p^4$
\\\hline
\end{tabular}
\end{center}
\end{table}


\begin{thebibliography}{AHS}

\bibitem[AA]{AA17}  N. Andruskiewitsch and  I. Angiono. 
\emph{On Finite dimensional Nichols algebras of diagonal type}. 
Bull. Math. Sci. \textbf{7} 353--573 (2017).


\bibitem[AAH1]{aah-triang} N. Andruskiewitsch,  I. Angiono and I. Heckenberger.
\emph{On finite GK-dimensional Nichols algebras over abelian groups}. Mem. Amer. Math. Soc., to appear.

\bibitem[AAH2]{aah-diag} N. Andruskiewitsch,  I. Angiono and I. Heckenberger.
\emph{On finite GK-dimensional Nichols algebras of diagonal type}. Contemp. Math. \textbf{728} (2019), 1--23.




\bibitem[An2]{Ang-crelle} I. Angiono,
{\em On Nichols algebras of diagonal type}, J. Reine Angew. Math.  683 (2013), 189--251.



\bibitem[CF]{CF}
Chang, H., Farnsteiner, R. \emph{Finite group schemes of p-rank $\leq 1$}. Math. Proc. Camb. Philos. Soc. \textbf{166} (2), 297-323. doi:10.1017/S0305004117000834 (2019)


\bibitem[CLW]{clw} C. Cibils, A. Lauve, S. Witherspoon, \emph{Hopf quivers and Nichols algebras in positive characteristic}, Proc. Amer. Math. Soc. 137(12) (2009) 4029–4041.


\bibitem[H2]{H-classif}  I. Heckenberger,  {\em Classification of arithmetic
root systems}. Adv. Math. \textbf{220} (2009), 59--124.

\bibitem[HS]{HS} I. Heckenberger and H.-J   Schneider, 
\emph{Yetter–Drinfeld modules over bosonizations of dually paired Hopf algebras}, Adv. Math. \textbf{244} (2013), 354--394.

\bibitem[HW]{HW} I. Heckenberger and J. Wang: \emph{Rank 2 Nichols algebras of diagonal type over fields of positive characteristic}, SIGMA, Symmetry Integrability Geom. Methods Appl. \textbf{11}, Paper 011, 24 p. (2015).


\bibitem[NW]{NW} V. C. Nguyen, X. Wang, \emph{Pointed $p^3$-dimensional Hopf algebras in positive characteristic}. Algebra Colloq. \textbf{25} 399--436 (2018).

\bibitem[NWW1]{NWW1} V. C. Nguyen, L. Wang and X. Wang, \emph{Classification of connected Hopf algebras of dimension
$p^3$ I}, J. Algebra, \textbf{424} (2015), 473--505.

\bibitem[NWW2]{NWW2} V. C. Nguyen, L. Wang and X. Wang, \emph{Primitive deformations of quantum p-groups}. Algebr. Represent. Theor. \textbf{22} (2019), 837--865.



\bibitem[W]{W} J. Wang: \emph{Rank three Nichols algebras of diagonal type over arbitrary fields}.
Isr. J. Math. \textbf{218}, 1--26 (2017).

\bibitem[R]{Ra} Radford, D. E., \emph{Hopf algebras}, Series on Knots and Everything 49. Hackensack, NJ: World Scientific. xxii, 559 p.  (2012).



\end{thebibliography}
\end{document}